\newtheorem{theorem}{Theorem}[section]
\newtheorem*{theorem*}{Theorem}
\newtheorem{lemma}[theorem]{Lemma}
\newtheorem{proposition}[theorem]{Proposition}
\newtheorem{corollary}[theorem]{Corollary}
\theoremstyle{definition}
\newtheorem{remark}[theorem]{Remark}
\newtheorem{example}[theorem]{Example}
\theoremstyle{plain}
\newcommand{\C}{\mathbb{C}}
\newcommand{\R}{\mathbb{R}}
\newcommand{\N}{\mathbb{N}}
\DeclareMathOperator{\SL}{SL}
\DeclareMathOperator{\SO}{SO}
\title{Compatibility of Real-Rooted Polynomials with Mixed Signs}
\author{Jonathan Leake and Nick Ryder}
\begin{document}

\begin{abstract}
    We characterize compatible families of real-rooted polynomials, allowing both positive and negative leading coefficients.
    Our characterization naturally generalizes the same-sign characterization used by Chudnovsky and Seymour in their famous 2007 paper proving the real-rootedness of independence polynomials of claw-free graphs, thus fully settling a question left open in their paper.
    Our methods are generally speaking elementary, utilizing mainly linear algebra and the established theory of interlacing polynomials, along with a bit of invariant theory.
    %
\end{abstract}

\maketitle

\section{Introduction}

Univariate polynomials with only real roots, or \emph{real-rooted} polynomials, play an important role in the study of combinatorics and combinatorial inequalities. The main basic idea is that the coefficients of any real-rooted polynomial form a log-concave sequence, and thus one can prove log-concavity and unimodality statements by proving real-rootedness of associated polynomials. This technique is well-known and well-studied, dating back at least to
\cite{harper1967stirling,HL,comtet1974advanced} where it was applied to Stirling numbers, graph matchings, and Eulerian numbers respectively.
And more recently, a generalization of this technique to Lorentzian polynomials was used to resolve long-standing log-concavity conjectures on matroids \cite{log-concave3,BH}.
In general such techniques have been used to prove many log-concavity statements and other inequalities in combinatorics; see the surveys \cite{branden2015unimodality,brenti1994combinatorics,stanley1989log} and the references therein.

One general method for proving real-rootedness is the method of \emph{compatible families}, where a family of polynomials is called \emph{compatible} if all non-negative linear combinations have only real roots. This method can be particularly useful when working with graphs: since graph polynomials can often be written as sums of smaller graph polynomials, real-rootedness then follows inductively from compatibility of these smaller graph polynomials. This method has had various applications, even beyond log-concavity; in particular in proving existence of bipartite Ramanujan graphs of all degrees \cite{ramanujan}, and in the resolution of the Kadison-Singer conjecture \cite{marcus2015interlacingii}. See the survey \cite{liu2017method} for further discussion of these applications.


In \cite{CS}, Chudnovsky and Seymour used the method of compatible families to prove that the independence polynomial of a claw-free graph has only real roots (thus implying log-concavity and unimodality statements on its independent sets). Their proof utilized a nice characterization of compatible families of polynomials via \emph{interlacing polynomials}: roughly speaking, a polynomial $p$ interlaces $q$ if their roots alternate along the real line.
We state their characterization as follows.

\begin{theorem}[\cite{CS}; see also \cite{fell1980zeros,bartlett1988root,dedieu1992obreschkoff}] \label{thm:pos-leading-coeff-compat}
    Let $\bm{f} = (f_1,\ldots,f_n) \subset \R[t]$ be such that $f_i$ is real-rooted and has positive leading coefficient for all $f_i \in \bm{f}$. Then the following are equivalent:
    \begin{enumerate}
        \item $\bm{f}$ is compatible,
        \item $(f_i,f_j)$ is compatible for all $i,j$,
        \item there is a real-rooted $g \in \R[t]$ which interlaces every $f_i \in \bm{f}$.
    \end{enumerate}
\end{theorem}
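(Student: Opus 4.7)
The plan is to prove the three equivalences via (1)$\Rightarrow$(2) (trivial), (3)$\Rightarrow$(1), and (2)$\Rightarrow$(3). For (3)$\Rightarrow$(1), I would assume without loss of generality that all $f_i$ have the same degree $n$ (padding with constants if needed), and let $g$ be a common interlacer of degree $n-1$ with roots $\lambda_1 < \cdots < \lambda_{n-1}$. Since each $f_i$ has positive leading coefficient and is interlaced by $g$, the signs $\sgn f_i(\lambda_k)$ are independent of $i$ and strictly alternate in $k$. Hence for any $c_1,\ldots,c_n \geq 0$, the values of $\sum c_i f_i$ at $\lambda_1, \ldots, \lambda_{n-1}$ also strictly alternate in sign, giving $n-1$ real roots by the intermediate value theorem, with an $n$-th root coming from the degree and leading coefficient.

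For (2)$\Rightarrow$(3), I would first handle the case of a pair and then extend. For $(f_1,f_2)$ with sorted root sequences $a_1 \leq \cdots \leq a_n$ and $b_1 \leq \cdots \leq b_n$, the goal reduces to establishing
\[
    \max(a_k,b_k) \leq \min(a_{k+1},b_{k+1}) \quad \text{for each } k,
\]
from which any polynomial with one root in each of these intervals will be a common interlacer. I would prove this by contrapositive: assuming, say, $a_{k+1} < b_k$, I would track the roots of $f_1 + s f_2$ continuously as $s$ runs over $[0,\infty)$. A careful count using the root configurations at $s=0$ and (after rescaling) at $s = \infty$ shows that the parity of roots in the interval $[a_{k+1},b_k]$ is inconsistent, forcing two roots to collide and leave the real line for some finite $s$, which contradicts the pairwise compatibility of $(f_1,f_2)$.

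To extend from pairs to the full family, I would exploit the decoupled form of the pair condition. Pairwise compatibility yields, for every $i,j,k$,
\[
    \max(a_k^i, a_k^j) \leq \min(a_{k+1}^i, a_{k+1}^j),
\]
and taking $\max_i$ on the left and $\min_j$ on the right gives
\[
    \max_i a_k^i \leq \min_j a_{k+1}^j \quad \text{for all } k.
\]
Selecting any $\lambda_k$ in each (nonempty) interval $[\max_i a_k^i,\ \min_j a_{k+1}^j]$ then produces a real-rooted $g(t) = \prod_{k=1}^{n-1}(t-\lambda_k)$ that interlaces every $f_i \in \bm{f}$.

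The main obstacle is the root-tracking step in the pair case: I need to justify carefully that failure of the interval inequality produces a complex root somewhere in the $1$-parameter family $f_1 + sf_2$. The delicate points are handling root multiplicities, ensuring continuity of the root branches (e.g.\ via the implicit function theorem applied away from discriminant points), and dealing with the limit $s \to \infty$ via homogenization. Once this step is in place, (3)$\Rightarrow$(1) is a clean sign-change argument and the multi-polynomial extension is a one-line max/min observation, so the entire proof relies essentially on elementary real analysis together with the pair case of the Obreschkoff-type theorem.
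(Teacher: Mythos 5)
Your architecture is correct and essentially the standard one: $(1)\Rightarrow(2)$ is trivial, $(3)\Rightarrow(1)$ is the usual sign-alternation at the roots of the common interlacer (modulo the minor point that $f_i(\lambda_k)$ may vanish, which one handles by a limiting argument), and your reduction of $(2)\Rightarrow(3)$ to the pair case via $\max_i a_k^i \le \min_j a_{k+1}^j$ is a clean and correct observation. The paper itself only quotes this theorem from the literature (Fell, Obreschkoff, Chudnovsky--Seymour), so there is no internal proof to compare against.

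The gap is precisely where you flag it, but it is more serious than the sketch suggests. The claim that failure of the interval condition forces the \emph{parity} of roots in $[a_{k+1},b_k]$ to differ between $s=0$ and $s=\infty$ is false: take $f_1=(t-1)(t-2)(t-5)$ and $f_2=(t-3)(t-4)(t-6)$, so $a_2=2<3=b_1$, yet each polynomial has exactly one root in $[2,3]$. (The pair is indeed incompatible --- $f_1+f_2=2t^3-21t^2+71t-82$ has a strictly negative local maximum, hence only one real root --- but the parity you cite does not detect it.) The invariant that does work is the number of real roots in a half-line $(-\infty,c)$ for a point $c$ with $\sgn f_1(c)=\sgn f_2(c)\neq 0$ and $\#\{i : a_i<c\}\ge \#\{i : b_i<c\}+2$; for such a $c$ no root of $(1-s)f_1+sf_2$ can cross $c$ (its value there is a convex combination of two like-signed numbers) nor escape to $\infty$ (the leading coefficient stays positive), so the forced drop of at least $2$ in this count means a conjugate pair must leave $\R$. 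Such a $c$ is easy to produce when $f_1f_2$ has simple distinct roots (the integer step function $N(c)=\#\{i:a_i\le c\}-\#\{i:b_i\le c\}$ changes by $\pm1$, starts and ends at $0$, and reaches $\ge 2$, hence passes through $2$), but it can fail to exist in general: for $f_1=(t-1)(t-2)^2$ and $f_2=(t-3)^2(t-4)$ the function $N$ takes only the values $0,1,3,1,0$ and never equals $2$. Closing this requires a perturbation-to-generic or gcd-reduction step, and this is the genuine content of the $n=2$ case; as written, your proposal leaves it open while the remainder of the argument is the easy part.
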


At the end of \cite{CS}, Chudnovsky and Seymour then ask for an analogue of Theorem~\ref{thm:pos-leading-coeff-compat} without the assumption that the leading coefficients of the polynomials are all positive. They provide a counterexample to the naive analogue of $(1) \iff (2)$, but leave open the question of an appropriate version of $(1) \iff (3)$. Partial results towards this question were obtained in \cite{liu2012polynomials}, for the $(3) \implies (1)$ direction and also for the $n=2$ case. The main goal of this paper is then to fully answer the question of Chudnovsky and Seymour.

\subsection{Previous work}

Just after posting this paper, Petter Br\"and\'en informed the authors of previous work which already essentially implies our main result. Specifically Theorem~\ref{thm:main}: $(1) \iff (2)$ is a corollary of Lemma 4 of \cite{bialas2012few}.

In fact, within the control theory literature there are a number of results regarding root locations of convex combinations of polynomials, with results often holding for polynomials with roots in a wide range of regions within the complex plane (including the real line). This includes the so-called ``edge theorem'' of \cite{bartlett1988root} (see also Theorem 1 of \cite{sideris1989edge}) which already implies Theorem \ref{thm:pos-leading-coeff-compat}: $(1) \iff (2)$. It appears that this line of work is relatively unknown to the geometry of polynomials community; we are unaware of references to these and other related papers in the interlacing polynomials literature.

\subsection{Our results}

To state our results, we need a bit more notation. Given real-rooted $f \in \R[t]$ we denote its roots (counted with multiplicities) by $\lambda_1(f) \geq \lambda_2(f) \geq \cdots$. Given real-rooted $f,g \in \R[t]$ with positive leading coefficients we say that $g$ \emph{interlaces}\footnote{Note that some authors reverse the order of $f$ and $g$ in this notation.} $f$, denoted $f \ll g$, if $\deg(f) \in \{\deg(g), \deg(g)-1\}$ and
\[
    \lambda_1(g) \geq \lambda_1(f) \geq \lambda_2(g) \geq \lambda_2(f) \geq \lambda_3(g) \geq \cdots.
\]
We also allow negative leading coefficients by adding the rule that $f \ll g$ implies $g \ll -f$ for all real-rooted $f,g \in \R[t]$. We also use the convention that $0 \ll f$ and $f \ll 0$ for any real-rooted $f \in \R[t]$. Condition $(3)$ of Theorem~\ref{thm:pos-leading-coeff-compat} is then equivalently stated as: there is a real-rooted $g \in \R[t]$ such that $f_i \ll g$ for all $i$. And finally, recall from above that a family of polynomials $\bm{f} \subset \R[t]$ is \emph{compatible} if all non-negative linear combinations of $\bm{f}$ are real-rooted. With this, we can state our main result.

\begin{theorem}[see also \cite{bialas2012few}] \label{thm:main}
    Let $\bm{f} = (f_1,\ldots,f_n) \subset \R[t]$ be such that $f_i$ is real-rooted for all $f_i \in \bm{f}$. Then the following are equivalent:
    \begin{enumerate}
        \item $\bm{f}$ is compatible,
        \item $(f_i,f_j,f_k)$ is compatible for all $i,j,k$.
    \end{enumerate}
    If we further assume that no convex combination of $\bm{f}$ is the zero polynomial, then the following is also equivalent to the above conditions:
    \begin{enumerate}
        \setcounter{enumi}{2}
        \item there is a real-rooted $g \in \R[t]$ such that $f_i \ll g$ for all $i$.
    \end{enumerate}
\end{theorem}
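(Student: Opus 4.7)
The easy directions: $(1) \Rightarrow (2)$ is immediate, since subfamilies of compatible families are compatible. For $(3) \Rightarrow (1)$, I would argue by sign-counting: a common interlacer $g$ forces every $f_i$ (regardless of leading sign) to alternate in a common pattern at $g$'s roots, as one can verify by cases using the sign rule $f \ll g \iff g \ll -f$ together with the classical picture for the positive-leading case. Consequently any non-negative combination $\sum c_i f_i$ alternates at these roots, producing a real root between each consecutive pair of roots of $g$; combined with a degree count and limits at $\pm\infty$, this gives real-rootedness.

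The main content is $(2) \Rightarrow (3)$ under the hypothesis that no convex combination of $\bm{f}$ vanishes. I plan induction on $n$. The cases $n \leq 2$ follow from \cite{liu2012polynomials}. The first genuinely new case is $n = 3$, where triple-compatibility is the same as full compatibility of $(f_1,f_2,f_3)$; here I would construct the common interlacer directly from the pencil spanned by the family, normalizing via the $\SL_2(\R)$ action on real-rooted polynomials (Möbius transformations on the projective line, which preserve real-rootedness and interlacing) to reach a canonical configuration where mixed signs can be analyzed cleanly. For the inductive step, I would attempt to assemble a common interlacer for $\bm{f} = (f_1,\ldots,f_n)$ from common interlacers of $(n-1)$-subfamilies (which exist by induction) together with interlacers of triples containing the ``missing'' polynomials, exploiting convexity of each interlacer set $\mathcal{I}(f_i) = \{g : f_i \ll g\}$ (averaging interlacers produces an interlacer) and a Helly-type argument in the quotient by $\SL_2$ and scaling, where the effective dimension of the parameter space is expected to drop enough that triple intersection suffices to force a global common point.

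To finish $(2) \Rightarrow (1)$ without the non-vanishing hypothesis, observe that if $\sum_{i \in S} c_i f_i = 0$ with all $c_i > 0$, then some $f_j$ with $j \in S$ is a non-negative combination of the others, so we may delete $f_j$ from $\bm{f}$ without shrinking its non-negative span; induction on $n$ reduces to the non-degenerate case covered above. I expect the main obstacle to be the inductive step in the $(2) \Rightarrow (3)$ argument: specifically, making precise the invariant-theoretic / Helly-type reduction from the $n = 3$ base case to larger $n$, since the effective dimension after quotienting and the convexity of the resulting interlacer sets must be verified carefully, most likely by leveraging a classical $\SL_2$-invariant such as the discriminant or resultant. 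A secondary obstacle is the $n = 3$ base case itself, where the mixed-sign configuration substantially complicates the positive-leading argument and requires an $\SL_2$-normalization chosen to respect the sign structure.
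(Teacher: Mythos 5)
Your outline correctly identifies the easy directions and the broad tools ($\SL_2(\R)$ action, interlacing, reduction of the non-proper case), but the core of $(2) \Rightarrow (3)$ as you have sketched it would not go through, and a key idea from the paper is absent.

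\textbf{The Helly-type inductive step is the wrong framework.} You propose assembling a common interlacer from interlacers of $(n-1)$-subfamilies via convexity of the sets $\mathcal{I}(f_i) = \{g : f_i \ll g\}$ and a Helly argument ``in the quotient by $\SL_2$ and scaling.'' The ambient space $\R^d[t]$ has dimension $d+1$, and quotienting by $\SL_2(\R)$ (dimension $3$) and scaling still leaves dimension $d-3$, which is unbounded in $d$; Helly would then require $(d-2)$-wise, not $3$-wise, intersection. Moreover, the sets $\mathcal{I}(f_i)$ are not $\SL_2$-invariant unless one simultaneously moves the $f_i$, so the quotient picture is not even well-posed. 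The deeper issue is that the statement being sought --- that $3$-wise nonempty intersection of the $\mathcal{I}(f_i)$ forces global nonempty intersection, uniformly in $d$ --- is far stronger than any Helly theorem can provide. Indeed, the positive-leading-coefficient version of this claim is precisely Theorem~\ref{thm:pos-leading-coeff-compat}, and its proof is an algebraic interlacing argument, not a dimension count. You are in effect restating the theorem rather than proving it.

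\textbf{The missing idea is the sign-constrained perturbation.} The paper's route from $(2)$ to $(1)$ and then to $(3)$ hinges on an analysis of where non-simple roots of convex combinations can occur (Propositions~\ref{prop:3-compat-exists-root} and~\ref{prop:double-root-signs}): if $g$ in the convex hull has a double root at $r$, then some $f_i$ vanishes at $r$ and $g''(r)\cdot f(r) \le 0$ for \emph{every} $f \in \bm{f}$. This sign coherence makes the single perturbation $\bm{f} \mapsto (f_i + \epsilon \bar f)_i$ with $\bar f = \sum_i f_i$ split every double root into two simple real roots simultaneously (Theorem~\ref{thm:main-compat-perturb}). Once all convex combinations of the perturbed family have simple roots, root-separation lemmas (Lemmas~\ref{lem:different-signs-distinct-roots}--\ref{lem:no-middle-roots-diff-sign}) show that a single $\SL_2(\R)$ rotation sends the whole family to one with same-sign leading coefficients, where Theorem~\ref{thm:pos-leading-coeff-compat} produces the common interlacer; taking $\epsilon \to 0$ and a compactness argument finish $(2)\Rightarrow(3)$. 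Your sketch of the $n=3$ base case (``construct the common interlacer directly from the pencil, normalizing via $\SL_2$'') does not contain this mechanism, and it is exactly what makes the mixed-sign case work.

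\textbf{A smaller but genuine error in the non-proper reduction.} You assert that if $\sum_{i \in S} c_i f_i = 0$ with $c_i > 0$ then ``some $f_j$ with $j \in S$ is a non-negative combination of the others, so we may delete $f_j$ without shrinking the non-negative span.'' Solving for $f_j$ gives a \emph{negative} combination of the others, and deletion generally does shrink the cone: for $f_1 = t^2$, $f_2 = -t^2$, $f_3 = 1$ (so $f_1 + f_2 + 0\cdot f_3 = 0$), removing $f_1$ loses $t^2$ from the nonnegative span and removing $f_2$ loses $-t^2$. The correct move, as in the paper, is pointwise: for each positive convex combination $g = \sum a_k f_k$, subtract the maximal multiple of the zero relation that keeps all coefficients nonnegative; this re-expresses \emph{that particular} $g$ using at most $n-1$ of the $f_i$, and the choice of which $f_j$ drops out depends on $g$. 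Combined with induction on $n$ this gives compatibility of the non-proper family, but the fixed-deletion version does not.

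\textbf{On $(3) \Rightarrow (1)$.} Your sign-counting sketch is plausible but unsubstantiated for mixed signs; you would need to check carefully that the alternation pattern survives the sign flips at roots of $g$. The paper's argument is cleaner: perturb the roots slightly, rotate by an $\SL_2(\R)$ element sending a carefully chosen point to $\infty$ so that all leading coefficients become positive, and then quote Theorem~\ref{thm:pos-leading-coeff-compat} together with Lemma~\ref{lem:sl2-interlacing}. I would recommend this route.
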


Let us note that condition $(3)$ of Theorem~\ref{thm:main} always implies $(1)$, even without the extra assumption that no convex combination of $\bm{f}$ is the zero polynomial. (This is proven in \cite[Theorem~2.3 and Remark~2.4]{liu2012polynomials}; we give a short proof of this fact in Section~\ref{sec:3_implies_1} below.) On the other hand, this extra assumption cannot be dropped in general (see Example~\ref{ex:counter}). Finally, in the case that $n=2$ this extra assumption implies $af_1 = -bf_2$, and thus $(1)$ and $(3)$ are equivalent in general in this case.

Given Theorem~\ref{thm:main}, the natural next result is a characterization of compatible families $(f_1,f_2,f_3) \subset \R[t]$ of size $3$. Combined with Theorem~\ref{thm:main} $(1) \iff (2)$, this yields a characterization of compatible families of polynomials based entirely upon interlacing properties of the polynomials.

\begin{proposition} \label{prop:main-compatible-triples}
    Let $(f_1,f_2,f_3) \subset \R[t]$ be such that $f_i$ is real-rooted for all $i \in \{1,2,3\}$. Then $(f_1,f_2,f_3)$ is compatible if and only if one of the following conditions holds:
    \begin{enumerate}
        \item there is a real-rooted $g \in \R[t]$ such that $f_1,f_2,f_3 \ll g$,
        \item there is a choice of $\{i,j,k\} = \{1,2,3\}$, $a \geq 0$, and $b,c > 0$ such that $af_i + bf_j + cf_k \equiv 0$ and either $f_i \ll f_j$ or $f_j \ll f_i$.
    \end{enumerate}
\end{proposition}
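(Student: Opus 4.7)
The plan is to combine the already-established Theorem~\ref{thm:main} with a careful case analysis on the linear dependencies in the triple $(f_1,f_2,f_3)$, together with the classical pencil characterization (cf.\ \cite{fell1980zeros,dedieu1992obreschkoff}) which says that every real linear combination of two real-rooted polynomials $f,g$ is real-rooted if and only if $f\ll g$ or $g\ll f$ in the extended sign convention.

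For the reverse direction, the implication $(1)\Rightarrow$ compatible is just the always-valid direction $(3)\Rightarrow(1)$ of Theorem~\ref{thm:main}. For $(2)\Rightarrow$ compatible, I would use the relation $af_i+bf_j+cf_k\equiv 0$ (with $c>0$) to rewrite any non-negative combination $\alpha_i f_i+\alpha_j f_j+\alpha_k f_k$ as the real linear combination $(\alpha_i-\alpha_k a/c)f_i+(\alpha_j-\alpha_k b/c)f_j$, which is real-rooted by the pencil characterization applied to the interlacing pair $f_i,f_j$.

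For the forward direction, I would assume $(f_1,f_2,f_3)$ is compatible and apply Theorem~\ref{thm:main}. If no convex combination of the triple vanishes, then $(1)\Rightarrow(3)$ of Theorem~\ref{thm:main} immediately produces the $g$ of our condition~(1). Otherwise, fix a nontrivial identity $a_1f_1+a_2f_2+a_3f_3=0$ with $a_i\geq 0$, and split into cases on the number of strictly positive $a_i$. If all three are positive, substituting $f_3=-(a_1/a_3)f_1-(a_2/a_3)f_2$ into a non-negative combination and letting $\alpha_3$ grow shows that the non-negative cone of $(f_1,f_2,f_3)$ sweeps out all of $\R f_1+\R f_2$; compatibility then forces every real combination of $f_1,f_2$ to be real-rooted, and the pencil characterization yields $f_1\ll f_2$ or $f_2\ll f_1$, which gives condition~(2) with $(i,j,k)=(1,2,3)$. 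If exactly two coefficients are positive, say $a_1,a_2>0$ and $a_3=0$, then $f_1$ and $f_2$ are negative multiples of each other, and combining the compatibility of the two sub-pairs $(f_1,f_3)$ and $(f_2,f_3)$ (the latter rewritten in terms of $f_1$) yields real-rootedness of every combination $\lambda f_1+\nu f_3$ (for all $\lambda,\nu\in\R$), hence $f_1\ll f_3$ or $f_3\ll f_1$, giving condition~(2) with $(i,j,k)=(3,1,2)$. Finally, if only one coefficient is positive the corresponding $f_i$ is zero and the triple degenerates to a pair; applying Theorem~\ref{thm:main} at $n=2$ together with the conventions $0\ll g$ and $g\ll 0$ produces either (1) or (2).

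The main obstacle is sign bookkeeping. One must ensure, across every case, that the interlacing produced really sits between the two indices whose coefficients are strictly positive in the relation of condition~(2), and that the extended sign rule $f\ll g\Rightarrow g\ll -f$ is applied consistently when passing between sub-pair compatibility and interlacing. None of the individual steps is deep, but the case analysis needs to be complete and the pencil characterization must be invoked in a form valid with mixed signs.
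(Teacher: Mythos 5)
Your proof is correct and follows essentially the same approach as the paper: the proper case is dispatched by Theorem~\ref{thm:main}, and the non-proper case reduces to the two-polynomial pencil characterization (HKO) after eliminating one polynomial via the vanishing convex combination. The paper packages your case analysis on the number of strictly positive coefficients into a single unified algebraic lemma (Lemma~\ref{lem:3-to-2-nonproper}), but the underlying computation and invocation of HKO are the same.
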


Finally, we are also able to say something more topological about the simplicies consisting of all convex combinations of $\bm{f}$. In what follows, let $\Delta(\bm{f})$ denote the simplex generated by $\bm{f}$, let $\R^d[t]$ denote the set of real polynomials of degree at most $d$, and let $\|\cdot\|$ denote any norm on $\R^d[t]$.

\begin{theorem} \label{thm:simplex-main}
    Let $\bm{f} = (f_1,\ldots,f_n) \subset \R^d[t]$ be compatible and such that $0 \not\in \Delta(\bm{f})$. Then for all $\epsilon > 0$ small enough there exists $\tilde{\bm{f}} = (\tilde{f}_1,\ldots,\tilde{f}_n) \subset \R^d[t]$ such that $\|f_i - \tilde{f}_i\| < \epsilon$ for all $i$ and 
    $\Delta(\tilde{\bm{f}})$ is contained in the interior of the set of real-rooted polynomials in $\R^d[t]$.
\end{theorem}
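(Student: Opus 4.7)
The plan is to apply Theorem~\ref{thm:main} to obtain a common interlacing polynomial $g$, refine it to a $\tilde{g}$ of suitable degree with all simple real roots, and then perturb each $f_i$ so that the interlacing $\tilde{f}_i \ll \tilde{g}$ becomes strict. A sign analysis of the extended $\ll$-relation will then show that strict interlacing is preserved by non-negative combinations, placing the entire simplex in the interior.

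Since $\bm{f}$ is compatible and $0 \notin \Delta(\bm{f})$, Theorem~\ref{thm:main} yields a real-rooted $g \in \R[t]$ with $f_i \ll g$ for all $i$. I would first perturb $g$ slightly to a $\tilde{g}$ with simple real roots $\mu_1 > \cdots > \mu_k$ so that $f_i \ll \tilde{g}$ still holds (continuity of roots). If $\deg(\tilde{g}) < d$, a simultaneous small perturbation of $g$ together with those $f_i$ of low degree is carried out to raise $\deg(\tilde{g})$ to $d$ while preserving compatibility. Then, for each $i$, I would perturb $f_i$ to $\tilde{f}_i$ so that $\tilde{f}_i(\mu_j) \neq 0$ for every $\mu_j$; a small nudge suffices, since $f_i \ll \tilde{g}$ only forces $f_i(\mu_j)$ into a closed half-line, from which an arbitrarily small push lands it in the open half-line.

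The crux is the observation that $\mathrm{sgn}(\tilde{f}_i(\mu_j))$ depends only on $\tilde{g}$, being equal to $(-1)^{j-1}$ times the sign of $\tilde{g}$'s leading coefficient. This is verified by checking the four sign-configurations obtained by chaining $f \ll g \Rightarrow g \ll -f$ from the same-sign base case. Consequently, for any convex combination $h = \sum_i c_i \tilde{f}_i$, each $h(\mu_j)$ is nonzero and has this common strictly alternating sign, so $h$ has a simple real root strictly between each consecutive pair of $\mu_j$'s. This accounts for at least $\deg(\tilde{g}) - 1 \geq d - 1$ simple real roots of $h$; combined with $h \in \R^d[t]$ and the fact that complex roots of real polynomials come in conjugate pairs, one concludes $\deg(h) \in \{d-1, d\}$ with all roots real and distinct, placing $h$ in the topological interior of the real-rooted polynomials in $\R^d[t]$.

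The main obstacle will be the degree-management step: raising $\deg(\tilde{g})$ to $d$ while keeping $f_i \ll \tilde{g}$ for all $i$. Naively multiplying $g$ by $(t-\alpha)$ destroys the interlacing with any $f_i$ whose degree lies strictly below $\deg(g)$, so a carefully coordinated simultaneous perturbation of $g$ together with the low-degree $f_i$ is needed — small in norm but raising just enough of the relevant degrees. Handling this step correctly, without breaking compatibility or the fixed sign pattern at the new roots of $\tilde{g}$, is the most delicate part of the proof.
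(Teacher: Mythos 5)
Your high-level strategy matches the paper's: extract a common interlacer $g$ from Theorem~\ref{thm:main}, perturb everything to make the interlacings strict, and conclude all convex combinations have only simple roots and hence lie in the interior. Your sign analysis is correct — for $\tilde{g}$ of degree $d$ with simple roots $\mu_1 > \cdots > \mu_d$ and positive leading coefficient, the extended $\ll$-relation does force $\operatorname{sgn}(f_i(\mu_j)) = (-1)^{j-1}$ (weakly), and once the $\tilde f_i(\mu_j)$ are nonzero, any convex combination $h$ has $d-1$ forced sign changes, whence $\deg(h) \geq d-1$ and all roots of $h$ (in $\R\cup\{\infty\}$) are real and simple by a degree count. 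This is a somewhat more elementary route than the paper's, which instead invokes Corollary~\ref{cor:simple-3-compat} at this step.

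However, there is a genuine gap in the perturbation step, and it is not just the degree-management issue you flag. You propose to first perturb $g$ alone to a $\tilde g$ with simple roots, claiming $f_i \ll \tilde g$ persists by continuity; this is false in general. If $g$ has a double root at $\mu$ and some $f_i$ also has a double root at $\mu$ (permitted by $f_i \ll g$, as then $\lambda_{j-1}(f_i)=\lambda_j(f_i)=\lambda_j(g)=\lambda_{j+1}(g)=\mu$), splitting $g$'s double root into $\mu\pm\delta$ immediately breaks the inequality $\lambda_{j-1}(f_i)\geq\lambda_j(\tilde g)=\mu+\delta$. So the perturbations of $g$ and of the $f_i$ must be chosen \emph{together}, with the root shifts of the $f_i$ and of $g$ carefully staggered — this is exactly what the paper's Lemma~\ref{lem:interlacing-perturb} does, assigning to the $j$-th root of $g$ a shift of $-3j\epsilon$ and to the $j$-th root of $f_i$ a shift of $-(3j\pm i^{-1})\epsilon$ depending on sign of leading coefficient. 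Moreover, the degree-management problem you flag as ``the most delicate part'' is resolved in the paper by a single clean device you do not invoke: act first by a rotation $\phi\in\SO_2(\R)\subset\SL_2(\R)$ chosen so that $\phi\cdot g$ and all $\phi\cdot f_i$ have degree exactly $d$ (possible since only finitely many rotations fail this), carry out the coordinated root-shift there, and then apply $\phi^{-1}$. Because $\SO_2(\R)$ is compact it preserves some norm on $\R^d[t]$ (Remark~\ref{rem:rotations}), so smallness of the perturbation is preserved under conjugation by $\phi$. Without this (or an equivalent device), both the degree-raising and the ``small nudge so that $\tilde f_i(\mu_j)\neq 0$'' steps remain unjustified, so as written the argument is incomplete.
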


Theorem~\ref{thm:simplex-main} does not consider the case when $0 \in \Delta(\bm{f})$. While we have not fully explored this case, we know that the conclusion of Theorem~\ref{thm:simplex-main} does not hold in full generality (see the discussion following Example~\ref{ex:counter}).

\subsection{Proof ideas, and counterexamples}

The results of this paper essentially follow from previously known theory of interlacing and compatible polynomials, along with some linear algebra and a bit of invariant theory of $\SL_2(\R)$. That said, here we will try to give a high-level sense of the main ideas of our proofs, and why certain conditions of our results cannot be dropped.

The bulk of the proofs is devoted to proving that condition $(2)$ of Theorem~\ref{thm:main} implies conditions $(1)$ and $(3)$, and so let's suppose we have a family $\bm{f}$ for which condition $(2)$ holds. One of the key insights of the proof then starts with a well-known idea: if $p_s := (1-s) \cdot p_0 + s \cdot p_1$ for $s \in [0,1]$ is such that $p_0$ is real-rooted and $p_1$ is not, then there must be some $s \in [0,1]$ for which $p_s$ is real-rooted with a non-simple root. In what follows, we are able to show that the conditions under which some convex combination $g$ of $\bm{f}$ has a non-simple root $r$ are very restrictive: $r$ must be a root of $f_i$ for some $i$ (Proposition~\ref{prop:3-compat-exists-root}), and the signs of $f_i(r)$ and $g''(r)$ are determined for all $i$ (Proposition~\ref{prop:double-root-signs}). These conditions allow us to perturb $\bm{f}$ to force all zeros of all convex combinations of $\bm{f}$ to be simple, thus implying compatibility.

That said, most of the arguments described above implicitly require that no convex combination of $\bm{f}$ is the zero polynomial. This is why condition $(3)$ of Theorem~\ref{thm:main} requires an extra assumption, compared to Theorem~\ref{thm:pos-leading-coeff-compat}. And further, this condition cannot be removed, which we now demonstrate with the following illustrative example.

\begin{example} \label{ex:counter}
    For any $r \geq 0$, define
    \[
        f(t) = r^2-t^2, \quad g(t) = t^2+2t-3, \quad h(t) = t^2-2t-3,
    \]
    with roots $(-r,r)$, $(-3,1)$, and $(-1,3)$ respectively. Thus all pairs of $\{f,g,h\}$ are compatible if and only if $r \in [1,3]$, and $\{f,g,h\}$ is compatible if and only if $r = \sqrt{3}$. Note that $r = \sqrt{3}$ is precisely the point where some convex combination of $\{f,g,h\}$ is the zero polynomial.
\end{example}

We now make a few observations regarding this example:
\begin{itemize}
    \item For $r \in [1,3] \setminus \{\sqrt{3}\}$, Example~\ref{ex:counter} gives a counterexample (similar to that of \cite{CS}) to the claim that pairwise compatibility of $\bm{f}$ is sufficient to imply compatibility of $\bm{f}$.

    \item For $r = \sqrt{3}$, Example~\ref{ex:counter} gives a counterexample to the claim that one can drop the extra assumption required for the equivalence of condition $(3)$ in Theorem~\ref{thm:main}. This is because the ordering of the roots of $f,g,h$ in this case makes it impossible to find some real-rooted $p$ for which $f,g,h \ll p$.

    \item For $r = \sqrt{3}$, continuity of roots implies no perturbation of $\Delta(\{f,g,h\})$ can lie in the interior of the set of real-rooted polynomials (see Theorem~\ref{thm:simplex-main}). Otherwise, we would obtain condition $(3)$ of Theorem~\ref{thm:main} for this perturbation, which would be a contradiction.
\end{itemize}

This suggests Example~\ref{ex:counter} (and its implications) as a reason why the general case is harder to handle than the case where all leading coefficients are of the same sign.

\subsection{Roadmap}

In Section~\ref{sec:prelims} we give notation, state various preliminary results on interlacing and the action of $\SL_2(\R)$, and prove the $(3) \implies (1)$ case of Theorem~\ref{thm:main}.
In Section~\ref{sec:basic-k-compat} we prove some basic and useful results for $k$-compatible families.
In Section~\ref{sec:proof-main} we prove the $(1) \iff (2)$ case of Theorem~\ref{thm:main}.
In Section~\ref{sec:main-2-implies-3} we prove the $(2) \implies (3)$ case of Theorem~\ref{thm:main}.
In Section~\ref{sec:simplex} we prove Theorem~\ref{thm:simplex-main}.
In Section~\ref{sec:proof-main-triples} we prove Proposition~\ref{prop:main-compatible-triples}.

\section{Preliminaries} \label{sec:prelims}

\subsection{Notation}

Let $\R^d[t]$ denote the vector space of real univariate polynomials of degree at most $d$. Given a finite list of polynomials $\bm{f} = (f_1,\ldots,f_n) \subset \R^d[t]$, we say that $\bm{f}$ is \textbf{proper} if there is no convex combination which is the zero polynomial, we say that $\bm{f}$ is \textbf{compatible} if every convex combination is real-rooted, and we say that $\bm{f}$ is \textbf{$k$-compatible} if every convex combination of at most $k$ polynomials in $\bm{f}$ is real-rooted. Further, we say that $f \in \R^d[t]$ has a root \textbf{at infinity (of multiplicity $m$)} if the degree of $f$ is $d-m$. The zero polynomial has a root of multiplicity $+\infty$ at every point in $\R \cup \{\infty\}$.

As above, given real-rooted $f \in \R[t]$ we denote its roots (counted with multiplicities) by $\lambda_1(f) \geq \lambda_2(f) \geq \cdots$. Given real-rooted $f,g \in \R[t]$ with positive leading coefficients we say that $g$ \textbf{interlaces} $f$, denoted $f \ll g$, if $\deg(f) \in \{\deg(g), \deg(g)-1\}$ and
\[
    \lambda_1(g) \geq \lambda_1(f) \geq \lambda_2(g) \geq \lambda_2(f) \geq \lambda_3(g) \geq \cdots.
\]
We also allow negative leading coefficients by adding the rule that $f \ll g$ implies $g \ll -f$ for all real-rooted $f,g \in \R[t]$.\footnote{\cite{liu2012polynomials} uses the separate notion of ``modification common interleaver'' for the case of leading coefficients of different signs.} We also use the convention that $0 \ll f$ and $f \ll 0$ for any real-rooted $f \in \R[t]$, and that $0 \ll 0$. Finally, the \textbf{Wronskian} of $f,g \in \R[t]$, denoted $W[f,g]$, is defined as $W[f,g] = f' \cdot g - g' \cdot f$.

\subsection{Interlacing polynomials}

Here we discuss some previously known results on interlacing polynomials. We will follow the presentation of Wagner. In \cite{wagner2011multivariate}, Wagner defines $\ll$ slightly differently, but we will show that our definitions coincide. By \cite[Section~2.3]{wagner2011multivariate}, we first have the following.

\begin{lemma} \label{lem:wronskian-pre}
    Fix real-rooted polynomials $f,g \in \R[t]$ and suppose either $f \ll g$ or $g \ll f$. Then either $W[f,g] \leq 0$ on all of $\R$ or $W[f,g] \geq 0$ on all of $\R$.
\end{lemma}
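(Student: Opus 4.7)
The plan is to reduce to the case where $f$ and $g$ are coprime with all simple roots, then use the partial fraction expansion of $f/g$. First I would verify the identity $W[hf_1, hg_1] = h^2\, W[f_1, g_1]$ by direct differentiation. Next I would show that after setting $h = \gcd(f,g)$, $f_1 = f/h$, $g_1 = g/h$, one still has $f_1 \ll g_1$ (respectively $g_1 \ll f_1$) and all roots of $f_1, g_1$ are simple: this uses the observation that any root of $g$ of multiplicity $k \geq 2$ is forced by the interlacing inequalities $\lambda_i(g) \geq \lambda_i(f) \geq \lambda_{i+1}(g)$ to be a root of $f$ of multiplicity at least $k-1$ (and symmetrically with $f$ and $g$ swapped). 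Since $h^2 \geq 0$ everywhere, the sign pattern of $W[f,g]$ matches that of $W[f_1, g_1]$, so it suffices to treat the coprime case. I also take both leading coefficients to be positive, since flipping the sign of $f$ or $g$ only flips the sign of $W[f,g]$ and hence does not affect the property of having constant sign.

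In the coprime positive-leading-coefficient case with $f \ll g$, consider the rational function $\phi := f/g$. A direct computation gives $\phi' = W[f,g]/g^2$, so the sign of $W[f,g]$ agrees with the sign of $\phi'$ off the roots of $g$. Writing $\mu_1 > \cdots > \mu_m$ for the simple roots of $g$, the partial fraction decomposition reads
\[
    \phi(t) \;=\; c + \sum_{i=1}^{m} \frac{r_i}{t - \mu_i}, \qquad r_i = \frac{f(\mu_i)}{g'(\mu_i)},
\]
with $c = 0$ when $\deg f < \deg g$ and $c$ equal to the ratio of leading coefficients otherwise.

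The key step is to show that every residue $r_i$ is positive. Since $g$ has positive leading coefficient and simple real roots, $\sgn g'(\mu_i) = (-1)^{i-1}$. By interlacing and coprimality, $f$ has exactly one simple root strictly between $\mu_{i+1}$ and $\mu_i$ and no root above $\mu_1$, so $f$ changes sign exactly once between consecutive $\mu_i$'s and $\sgn f(\mu_i) = (-1)^{i-1}$ as well, giving $r_i > 0$. Consequently
\[
    \phi'(t) \;=\; -\sum_{i=1}^{m} \frac{r_i}{(t - \mu_i)^2} \;\leq\; 0
\]
wherever defined, so $W[f,g] \leq 0$ on $\R \setminus \{\mu_1, \ldots, \mu_m\}$ and by continuity on all of $\R$. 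The case $g \ll f$ is handled symmetrically by considering $\psi := g/f$, for which $\psi' = -W[f,g]/f^2$, and one obtains $W[f,g] \geq 0$. The degenerate case $f \equiv 0$ or $g \equiv 0$ is trivial since then $W[f,g] \equiv 0$. I expect the main bookkeeping obstacle to be the reduction to the coprime case: one must carefully verify that dividing out the gcd truly preserves the interlacing relation and eliminates all multiplicities, so that the partial-fraction/residue analysis in the second step is justified.
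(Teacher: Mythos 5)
The paper does not actually prove Lemma~\ref{lem:wronskian-pre}; it is stated with a citation to \cite[Section~2.3]{wagner2011multivariate}, so there is no in-paper argument to compare against. Your partial-fraction proof is correct and self-contained, and in fact it establishes more than the stated lemma: by computing the sign of each residue $r_i = f(\mu_i)/g'(\mu_i)$ you determine \emph{which} sign $W[f,g]$ has, which is precisely the content of the stronger Lemma~\ref{lem:wronskian}. It is worth contrasting this with how the paper does prove Lemma~\ref{lem:wronskian}: there the constant-sign property of $W[f,g]$ is taken as given (this very lemma), and the sign is then pinned down by evaluating $R=f/g$ at just two carefully chosen points $t_0,t_1$ above $\lambda_1(g)$, after the same reduction to positive leading coefficients and no common roots. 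Your residue argument replaces that two-point evaluation with a global decomposition and so would let one prove both lemmas at once, without relying on Wagner for the constant-sign fact. The one step you flag yourself --- that dividing out $h=\gcd(f,g)$ preserves the direction of interlacing --- does work out: if $r$ is a common root and $r$ has first occurrence at index $j$ in the sorted roots of $f$ and index $i$ in those of $g$, the interlacing inequalities force $j \in \{i-1,i\}$, and in either case removing one copy of $r$ from each list preserves the chain of inequalities; combined with your multiplicity observation this gives coprimality and simplicity, so the residue calculation is justified.
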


\begin{lemma} \label{lem:wronskian}
    Fix real-rooted polynomials $f,g \in \R[t]$ and suppose either $f \ll g$ or $g \ll f$. Then $f \ll g$ if and only if $W[f,g] \leq 0$ on all of $\R$.
\end{lemma}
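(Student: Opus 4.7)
The plan is first to invoke Lemma~\ref{lem:wronskian-pre} to reduce everything to determining \emph{which} constant sign $W[f,g]$ takes on $\R$, and then to pin this down by computing the leading coefficient of $W[f,g]$ — equivalently, the sign of $W[f,g]$ as $t\to\infty$.

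Before computing, I would reduce to the case where both $f$ and $g$ have positive leading coefficient. The cases $f\equiv 0$ and $g\equiv 0$ are handled directly by the convention $0\ll h$ (giving $W\equiv 0$, which is $\le 0$). Otherwise, using that $W[-f,g]=W[f,-g]=-W[f,g]$ (hence $W[-f,-g]=W[f,g]$) together with the convention $f\ll g\iff g\ll -f$, one checks routinely that flipping the signs of $f$ or $g$ as needed preserves the statement to be proved.

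For the forward direction, assume $f\ll g$ with both leading coefficients positive, and write $f=a\prod_i(t-\alpha_i)$ and $g=b\prod_j(t-\beta_j)$ with $a,b>0$. Since the definition of $\ll$ forces $\deg g-\deg f\in\{0,1\}$, I would split into two subcases. If $\deg g=\deg f+1$, a direct computation gives leading coefficient $-ab<0$, so $W[f,g]\to-\infty$ and hence $W[f,g]\le 0$ everywhere by Lemma~\ref{lem:wronskian-pre}. If $\deg g=\deg f$, a short computation using Vieta's formulas shows the leading coefficient of $W[f,g]$ equals $ab\bigl(\sum_i\alpha_i-\sum_j\beta_j\bigr)$, which is $\le 0$ because the interlacing inequalities $\beta_i\ge\alpha_i$ imply $\sum_j\beta_j\ge\sum_i\alpha_i$. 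If this is strictly negative, we conclude as before; if it vanishes, termwise equality forces $\alpha_i=\beta_i$ for all $i$, so $f$ and $g$ are scalar multiples and $W[f,g]\equiv 0$.

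For the converse, assume $W[f,g]\le 0$ on $\R$. If $f\not\ll g$, then by the hypothesis of the lemma $g\ll f$, and the forward direction applied to $(g,f)$ yields $W[g,f]\le 0$; but $W[g,f]=-W[f,g]\ge 0$, so $W[f,g]\equiv 0$, forcing $f$ and $g$ to be linearly dependent and hence $f\ll g$ after all, a contradiction. The main obstacle I foresee is the degenerate equal-degree case in which the leading coefficient of $W[f,g]$ vanishes: one needs to recognize that the marginal equality in the interlacing chain $\beta_i\ge\alpha_i$ collapses $f$ and $g$ to scalar multiples of each other. Beyond this bookkeeping (and the sign-reduction step at the start), everything is routine asymptotic analysis.
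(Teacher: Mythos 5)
Your proof is correct, but it takes a genuinely different route from the paper's. Both proofs rest on Lemma~\ref{lem:wronskian-pre} (which reduces the problem to pinning down a single sign), but they pin it down in different places. The paper works at a \emph{finite} point: it introduces $R = f/g$, observes $R' = W[f,g]/g^2$, and reads off the sign of $R'$ from the behavior of $R$ just to the right of $\lambda_1(g)$ (where $R$ is large positive if $f \ll g$ and large negative if $g \ll f$) compared with its behavior far to the right (where $R$ approaches the ratio of leading coefficients). You instead work \emph{at infinity}: you compute the leading coefficient of $W[f,g]$ directly, splitting on whether $\deg g = \deg f + 1$ (leading coefficient $-ab < 0$) or $\deg g = \deg f$ (leading coefficient $ab(\sum\alpha_i - \sum\beta_j)$, which is $\le 0$ by termwise interlacing $\beta_i \ge \alpha_i$, with the boundary case $\sum\alpha_i = \sum\beta_j$ forcing $f,g$ proportional and $W \equiv 0$). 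Your version is a little more computational and needs the case split plus the degenerate-case observation, but it avoids the analysis of $R$ near its pole; the paper's version is case-free and handles the degeneracy by dividing out shared roots. The sign-flip reduction to positive leading coefficients is harmless in both; just note that to preserve the exact statement you should apply the positive-case lemma to the pair $(g,-f)$ (equivalently, combine the lemma for $(-f,g)$ with the either/or hypothesis), rather than naively substituting $-f$ for $f$.
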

\begin{proof}
    If either $f \equiv 0$ or $g \equiv 0$, then the result is immediate. Otherwise note that $W[f,g] = W[g,-f]$, and thus by definition of $\ll$ we may assume that $f,g$ both have positive leading coefficients. Now we compute
    \[
        R'(t) = \left(\frac{f}{g}\right)'(t) = \frac{f' \cdot g - g' \cdot f}{g^2} = \frac{W[f,g]}{g^2}.
    \]
    Thus by Lemma~\ref{lem:wronskian-pre}, $R'(t)$ has the same sign on all of $\R$. The above computation also shows that without loss of generality, we may assume that $f,g$ share no roots by possibly dividing by $(t-r)$ for every shared root $r$.

    With this, let $t_0 = \lambda_1(g) + \epsilon$ and $t_1 = \lambda_1(g) + \frac{1}{\epsilon}$ for small $\epsilon > 0$. Since $g$ has no zeros in $[t_0,t_1]$, the function $R'(t)$ is continuous on $[t_0,t_1]$. If $f \ll g$ then $R(t_0)$ is large and positive, and if $g \ll f$ then $R(t_0)$ is large and negative. Further, $R(t_1)$ approaches the ratio of the leading coefficients of $f,g$. Thus if $f \ll g$ then $R(t_1)$ approaches a positive constant or $0$ as $\epsilon \to 0$, and if $g \ll f$ then $R(t_1)$ approaches a positive constant or $+\infty$ as $\epsilon \to 0$. Therefore if $f \ll g$ then $R'(t) \leq 0$, and if $g \ll f$ then $R'(t) \geq 0$. Thus the same holds for $W[f,g]$.
\end{proof}

The next result is one of the most important in the theory; see \cite[Section~2.3]{wagner2011multivariate} for discussion and further references.

\begin{theorem}[Hermite-Kakeya-Obreschkoff (HKO) Theorem] \label{thm:hko}
    Let $f,g \in \R[t]$ be real-rooted polynomials. Then $af + bg$ is real-rooted for all $a,b \in \R$ if and only if either $f \ll g$ or $g \ll f$.
\end{theorem}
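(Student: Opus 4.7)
The plan is to prove each direction of the iff separately. In both directions I first make a common reduction: since $f$ and $g$ are individually real-rooted (take $a=1,b=0$ and $a=0,b=1$), any shared factor $p$ is itself real-rooted, so pulling it out of $f$ and $g$ preserves compatibility on one side and interlacing on the other. Thus I may assume $\gcd(f,g) = 1$, and the convention $f \ll g \iff g \ll -f$ lets me further assume both leading coefficients are positive.

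For the direction ``interlacing implies compatibility,'' assume WLOG $f \ll g$. By Lemma~\ref{lem:wronskian}, $W[f,g] \leq 0$ on $\R$, so the rational function $R(t) := f(t)/g(t)$ has $R'(t) = W[f,g](t)/g(t)^2 \leq 0$ and is non-increasing on each interval between consecutive roots of $g$. A sign analysis at the simple poles combined with interlacing shows that $R$ goes from $+\infty$ down to $-\infty$ monotonically on each bounded interval between roots of $g$. Consequently, for any $a,b \in \R$ with $b \neq 0$, the equation $af + bg = 0$ (equivalently $R(t) = -b/a$) picks up exactly one real solution per such interval, and a matching analysis on the two unbounded intervals, using $\lim_{t \to \pm\infty} R(t) \in \{0, L\}$, accounts for any remaining roots; summing yields $\deg(af + bg)$ real roots. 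The edge cases $a = 0$ or $b = 0$ return $f$ or $g$, which are real-rooted.

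For the direction ``compatibility implies interlacing,'' the core step is to show that $W[f,g]$ has constant sign on $\R$. Suppose for contradiction it changes sign at some $t_0$. Since $\gcd(f,g) = 1$ forces $W[f,g](\mu) = -f(\mu)g'(\mu) \neq 0$ at any simple root $\mu$ of $g$, I may (using a genericity perturbation and limit) assume $g(t_0) \neq 0$. Then $R'(t_0) = 0$ and $R$ has a strict local extremum there, say a local maximum of value $c_0 := R(t_0)$; locally $R(t) \approx c_0 + \tfrac{1}{2} R''(t_0)(t - t_0)^2$, so $R(t) = c$ has two real solutions near $t_0$ for $c$ slightly less than $c_0$ and none for $c$ slightly greater. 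Since $\deg(f - cg)$ is constant for $c$ close to $c_0$, the polynomial $f - cg$ must acquire a pair of complex-conjugate roots as $c$ crosses $c_0$, contradicting real-rootedness. With $W[f,g]$ of constant sign (say $\leq 0$), $R$ is monotone non-increasing on each interval of continuity; combined with the requirement that $R(t) = c$ have $\deg(f - cg)$ real solutions for generic $c$, this forces $R$ to span all of $\R$ on each bounded interval between poles, giving exactly one real root of $f$ per interval, and the sign analysis at $\pm\infty$ pins down the full interlacing pattern $f \ll g$. The case $W[f,g] \geq 0$ symmetrically yields $g \ll f$.

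The main obstacle is handling degenerate cases cleanly: multiple roots of $g$, zeros of $W[f,g]$ coinciding with roots of $g$, and the perturbation step in the hard direction, which must stay within the compatible locus. A standard remedy is to use that the set of real-rooted polynomials is closed, perturb $(f,g)$ slightly into generic position (simple distinct roots of $f$, $g$, and $W[f,g]$, with no coincidences) inside a small compatible neighborhood, prove the conclusion there, and recover the original case by continuity of polynomial roots.
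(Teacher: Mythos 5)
The paper does not prove this theorem; it is quoted as a classical result from Wagner's survey \cite{wagner2011multivariate}, so there is no in-paper argument to compare against. Your proposal is therefore a self-contained proof attempt, and it is worth evaluating on its own terms.

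The easy direction (interlacing $\Rightarrow$ compatibility) is essentially sound. After removing $\gcd(f,g)$ you should note explicitly that interlacing with $\gcd(f,g)=1$ already forces both $f$ and $g$ to have only simple roots: if $g$ had a repeated root $\mu$, the chain of interlacing inequalities would pin some root of $f$ at $\mu$, contradicting coprimality. With that in hand the monotonicity of $R=f/g$ from Lemma~\ref{lem:wronskian} does give exactly one preimage of $-b/a$ per bounded gap between poles, and the degree count at $\pm\infty$ closes the argument. This is the standard proof and it works.

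The hard direction (compatibility $\Rightarrow$ interlacing) contains a genuine gap, which you yourself flag but do not actually repair. Your core argument is fine when $g(t_0)\neq 0$: a strict local extremum of $R$ at $t_0$ with value $c_0$ produces a double (or higher) root of $f-c_0 g$ at $t_0$, and as $c$ crosses $c_0$ those roots must leave the real line, contradicting real-rootedness of $f-cg$. The problem is the reduction to $g(t_0)\neq 0$. Your observation that $W[f,g](\mu)=-f(\mu)g'(\mu)\neq 0$ at \emph{simple} roots $\mu$ of $g$ only shows that a sign change of $W$ cannot sit at a simple root of $g$; it leaves open the possibility of a sign change at a \emph{multiple} root of $g$. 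Your proposed remedy — perturb $(f,g)$ ``inside a small compatible neighborhood'' into generic position and pass to the limit — does not work as stated, because the set of pairs $(f,g)$ with $af+bg$ real-rooted for all $a,b$ is a closed cone, not an open set, and a boundary pair need not have any nearby compatible pairs in any prescribed generic stratum. Justifying the existence of such a perturbation would essentially require knowing the structure of the compatible locus in advance, which is circular.

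The clean repair is to discard the perturbation entirely and run your own local-root analysis once more: if $g$ has a root $\mu$ of multiplicity $m\geq 2$ with $f(\mu)\neq 0$ (forced by $\gcd(f,g)=1$), then near $\mu$ the polynomial $g+cf$ behaves like $\tfrac{g^{(m)}(\mu)}{m!}(t-\mu)^m + cf(\mu)$, and the $m$ roots near $\mu$ are (to leading order) the $m$-th roots of a nonzero real number. For $m\geq 3$ some of these are never real; for $m=2$ they are nonreal for one sign of $c$. Either way $g+cf$ fails to be real-rooted for some $c\in\R$, contradicting the hypothesis. By symmetry $f$ also has only simple roots. This removes the only obstruction, after which $g(t_0)\neq 0$ holds automatically at any sign change $t_0$ of $W[f,g]$ and your main contradiction goes through. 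With the multiple-root case handled this way, the remainder of your constant-sign-of-Wronskian argument and the subsequent root-counting correctly recover $f\ll g$ or $g\ll f$.
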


\subsection{The action of $\SL_2(\R)$}

The vector space $\R^d[t]$ can be equipped with a group action of $\SL_2(\R)$ in the usual way. Given $p \in \R^d[t]$ and $\phi = \left[\begin{smallmatrix} \alpha & \beta \\ \gamma & \delta \end{smallmatrix}\right] \in \SL_2(\R)$, we define
\[
    \phi^{-1} \cdot p(t) = (\gamma t+\delta)^d \cdot p\left(\frac{\alpha t+\beta}{\gamma t+\delta}\right).
\]
Given $r \in \R \cup \{\infty\}$ we also define $\phi \cdot r = \frac{\alpha r+\beta}{\gamma r+\delta}$, which implies $p(r) = 0$ if and only if $\phi \cdot p(\phi \cdot r) = 0$. (Here, $\phi \cdot \infty = \frac{\alpha}{\gamma}$.) This action clearly preserves the set of real-rooted polynomials in $\R^d[t]$. See \cite{wagner2001logarithmic} for further discussion of the connections of this group action to log-concavity.

We will mainly utilize this $\SL_2(\R)$ action to remove roots at infinity and to adjust polynomials to have certain nice properties. Thus we need the following basic result. While we could not find this result in the literature, it was probably already essentially known to the experts.

\begin{lemma} \label{lem:sl2-interlacing}
    Given $\phi \in \SL_2(\R)$ and real-rooted $f,g \in \R^d[t]$, we have that $f \ll g$ if and only if $\phi \cdot f \ll \phi \cdot g$.
\end{lemma}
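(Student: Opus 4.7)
The plan is to reduce the claim to the Wronskian characterization in Lemma~\ref{lem:wronskian}: if either $f \ll g$ or $g \ll f$, then $f \ll g$ iff $W[f,g] \leq 0$ on all of $\R$. This factors the statement into two independent preservation claims under $\phi$: the unordered condition (a) ``either $f \ll g$ or $g \ll f$'', and the sign condition (b) $W[f,g] \leq 0$ on $\R$. Claim (a) is immediate from HKO (Theorem~\ref{thm:hko}) combined with $\R$-linearity of the action: (a) is equivalent to real-rootedness of every $af + bg$, and $a(\phi \cdot f) + b(\phi \cdot g) = \phi \cdot (af + bg)$ is real-rooted because the action preserves the set of real-rooted polynomials.

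For claim (b), I would compute the Wronskian of the transformed polynomials directly. After replacing $\phi$ by $\phi^{-1}$ (harmless, since both range over $\SL_2(\R)$), set $v(t) = \gamma t + \delta$ and $u(t) = (\alpha t + \beta)/v(t)$. Using $\det \phi = 1$ to get $u'(t) = 1/v(t)^2$, the product and chain rules give $(\phi^{-1} \cdot p)'(t) = d\gamma\, v(t)^{d-1} p(u(t)) + v(t)^{d-2} p'(u(t))$, and substituting into the Wronskian definition produces a cancellation of the $d\gamma\, v^{2d-1} f(u)g(u)$ terms, yielding
\[
    W[\phi^{-1} \cdot f,\; \phi^{-1} \cdot g](t) \;=\; v(t)^{2d-2}\, W[f,g]\bigl(u(t)\bigr).
\]
Since $2d-2$ is even (the cases $d \leq 1$ being trivial), $v(t)^{2d-2} \geq 0$, and $u$ is a bijection between dense subsets of $\R$; hence $W[f,g] \leq 0$ on all of $\R$ iff $W[\phi^{-1} \cdot f, \phi^{-1} \cdot g] \leq 0$ on all of $\R$, with the single omitted Möbius point handled in each case by continuity of polynomials. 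Combined with (a) and Lemma~\ref{lem:wronskian}, this gives the desired equivalence.

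The main subtlety I anticipate is the degree constraint $\deg(f) \in \{\deg(g), \deg(g)-1\}$ built into the definition of $\ll$, since $\phi$ can shift degrees by creating or destroying roots at infinity. This is implicitly handled by Lemma~\ref{lem:wronskian}, which detects $\ll$ entirely from conditions (a) and (b); both are preserved. If a direct bookkeeping check is desired, one observes that the $\SL_2(\R)$-action corresponds to a Möbius permutation of $\R \cup \{\infty\}$ acting on roots, so the multiset of roots of $f$ and $g$ in $\R \cup \{\infty\}$ is permuted uniformly, and the degree constraint translates into the analogous constraint for $\phi \cdot f$ and $\phi \cdot g$.
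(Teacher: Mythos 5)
Your proof is correct and follows essentially the same route as the paper: both first use HKO together with $\R$-linearity of the $\SL_2(\R)$-action to conclude that either $\phi\cdot f \ll \phi\cdot g$ or $\phi\cdot g \ll \phi\cdot f$, and both then pin down the direction via the identity $W[\phi\cdot f, \phi\cdot g] = \phi\cdot W[f,g]$ on $\R^{2d-2}[t]$ combined with the evenness of $2d-2$ and Lemma~\ref{lem:wronskian}. The only cosmetic difference is that you write out the Wronskian computation that the paper calls ``straightforward'' and prove the equivalence directly rather than invoking the group structure to reduce to one implication.
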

\begin{proof}
    Since $\SL_2(\R)$ is a group, we only need to show that $f \ll g$ implies $\phi \cdot f \ll \phi \cdot g$. Since $\phi$ acts as a linear map on $\R^d[t]$, if all linear combinations of $f$ and $g$ are real-rooted, then all linear combinations of $\phi \cdot f$ and $\phi \cdot g$ are real-rooted. Thus by HKO (Theorem~\ref{thm:hko}), $f \ll g$ implies either $\phi \cdot f \ll \phi \cdot g$ or $\phi \cdot g \ll \phi \cdot f$. Finally, a straightforward computation shows that the Wronskian, considered as a map $W: \R^d[t] \times \R^d[t] \to \R^{2d-2}[t]$ is invariant under the action of $\SL_2(\R)$; that is, $W[\phi \cdot f, \phi \cdot g] = \phi \cdot W[f,g]$. Since $2d-2$ is even, Lemma~\ref{lem:wronskian} therefore implies $\phi \cdot f \ll \phi \cdot g$.
\end{proof}

\begin{remark} \label{rem:rotations}
    Let $f \in \R^d[t]$ be real-rooted, and consider the roots of $f$ to be on the unit circle via stereographic projection to $\R \cup \{\infty\}$. Then $\SO_2(\R) \subset \SL_2(\R)$ acts on $f$ by rotating its roots about the circle. Further, $\SO_2(\R)$ is a compact group and thus preserves some norm on $\R^d[t]$ (e.g., $\|p\| := \int_{\SO_2(\R)} \|\phi \cdot p\|' \, d\phi$ for any norm $\|\cdot\|'$). We will frequently use $\SO_2(\R)$, along with $\SL_2(\R)$, to perturb polynomials to have desirable properties.
\end{remark}



\subsection{Proof of Theorem~\ref{thm:main}: $(3) \implies (1)$} \label{sec:3_implies_1}

We now give a quick proof of the $(3) \implies (1)$ case of Theorem~\ref{thm:main}. As stated in the introduction regarding this case of Theorem~\ref{thm:main}, we do not require the extra assumption of properness on $\bm{f}$, and this is already proven in \cite[Theorem~2.3 and Remark~2.4]{liu2012polynomials}. We prove this here mainly to demonstrate the utility of the action of $\SL_2(\R)$ in the theory of interlacing polynomials.

\begin{proposition}
    Let $\bm{f} = (f_1,\ldots,f_n) \subset \R^d[t]$ be such that $f_i$ is real-rooted for all $f_i \in \bm{f}$. If there is a real-rooted $g \in \R^d[t]$ such that $f_i \ll g$ for all $i$, then $\bm{f}$ is compatible.
\end{proposition}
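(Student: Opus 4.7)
The plan is to prove the stronger statement that $\sum_i a_i f_i \ll g$ for every non-negative combination, by induction on the number of summands; this immediately implies compatibility of $\bm{f}$, since $f \ll g$ entails $f$ is real-rooted. The base case (and the fact that $a \geq 0$ scalings preserve $\ll$) is immediate, so the work reduces to the following key lemma: if $p, q \in \R^d[t]$ are real-rooted with $p \ll g$ and $q \ll g$, then $ap + bq \ll g$ for all $a, b \ge 0$.

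For the key lemma, I would first apply an $\SL_2(\R)$ transformation so that $g$ has degree exactly $d$; by Lemma~\ref{lem:sl2-interlacing} the interlacings $p \ll g$ and $q \ll g$ are preserved, and since $\SL_2(\R)$ acts linearly on $\R^d[t]$ the statement we want is unchanged. Next, if $g$ has a root $r$ of multiplicity $m \ge 2$, then the interlacing inequalities $\lambda_k(g) = r = \lambda_{k+m-1}(g)$ force both $p$ and $q$ to have $r$ as a root of multiplicity at least $m-1$, so the common factor $(t-r)^{m-1}$ can be divided out of all three polynomials without losing anything. Iterating, I reduce to the case where $g$ has $d$ distinct real roots $\lambda_1 > \cdots > \lambda_d$, and after possibly negating $g$ I assume its leading coefficient is positive.

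Now by Lemma~\ref{lem:wronskian}, combined with the identity $W[f, g] = W[g, -f]$ that handles the mixed-sign convention in the definition of $\ll$, both $W[p, g] \le 0$ and $W[q, g] \le 0$ on $\R$, so bilinearity gives $W[ap + bq, g] \le 0$ on $\R$. Evaluating at each $\lambda_k$ yields $-g'(\lambda_k) \cdot (ap + bq)(\lambda_k) \le 0$, and since $\sgn g'(\lambda_k) = (-1)^{k-1}$, the values $(ap + bq)(\lambda_k)$ alternate weakly in sign; the intermediate value theorem then produces $d - 1$ real roots of $ap + bq$ in the open intervals $(\lambda_{k+1}, \lambda_k)$, while examining the limits as $t \to \pm\infty$ (cases split on the sign of the leading coefficient of $ap + bq$, using $\deg(ap + bq) \le d$) completes the real-root count and recovers the interlacing $ap + bq \ll g$. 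The main subtlety will be the degenerate case where $(ap + bq)(\lambda_k) = 0$ for some $k$, leaving only weak sign alternation: here the vanishing directly contributes a real root at $\lambda_k$, and a careful multiplicity count, or a limiting argument from a slight $\SO_2(\R)$ perturbation (Remark~\ref{rem:rotations}) using the closedness of the set of real-rooted polynomials, recovers the required count.
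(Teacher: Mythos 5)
The proposal is correct in outline but takes a genuinely different route from the paper. The paper's proof applies an $\SL_2(\R)$ rotation: after enlarging the top roots of the negative-leading-coefficient $f_i$ by $\epsilon$, there is a point $t_0$ lying strictly between $\lambda_1(g)$ and the largest roots of the negative-sign members; rotating $t_0$ to $\infty$ makes \emph{all} of $g$ and the $f_i$ have positive leading coefficients while preserving $\ll$ by Lemma~\ref{lem:sl2-interlacing}, and then compatibility is immediate from the cited Chudnovsky--Seymour result (Theorem~\ref{thm:pos-leading-coeff-compat}). Your proof does not invoke Theorem~\ref{thm:pos-leading-coeff-compat} at all: it re-derives a mixed-sign version of the Hermite--Kakeya/Fell ``common interlacer'' principle directly, by passing to the Wronskian inequality $W[ap+bq,g]\le 0$, evaluating at the simple roots of $g$, and running the intermediate value theorem with a case split at $\pm\infty$. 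That is a heavier, more self-contained argument: the paper's proof is shorter because it offloads the root counting to the known theorem, whereas yours reproves it from scratch.

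One substantive remark on your degenerate case. The ``careful multiplicity count'' is the right mechanism, and here is the clean version: the Wronskian inequality at $\lambda_k$ gives $g'(\lambda_k)\,p(\lambda_k)\ge 0$ and $g'(\lambda_k)\,q(\lambda_k)\ge 0$ regardless of the signs of the leading coefficients of $p$ and $q$, so for $a,b>0$ the equality $(ap+bq)(\lambda_k)=0$ forces $p(\lambda_k)=q(\lambda_k)=0$, meaning $\lambda_k$ is a \emph{common} root of $p$, $q$, and $g$, which should simply be divided out at the start; after that the sign alternation is strict and the IVT count is exact. By contrast, the proposed $\SO_2(\R)$ perturbation as stated does not remove the degeneracy: a rotation $\phi$ acts linearly, so $\phi\cdot(ap+bq)=a(\phi\cdot p)+b(\phi\cdot q)$ still vanishes at $\phi\cdot\lambda_k$, a root of $\phi\cdot g$ --- the coincidence merely moves with the rotation. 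You should rely on the common-root cancellation rather than the rotation argument to finish the degenerate case.
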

\begin{proof}
    By possibly applying an $\SL_2(\R)$ perturbation to $\bm{f}$ and $g$, we may assume that  $g$ is of degree exactly $d$ and $f_i$ is of degree exactly $d$ for all $i$. By possibly negating all polynomials, we may assume that $g$ has positive leading coefficient. And finally, by possibly permuting $\bm{f}$, we may assume that $f_1,\ldots,f_m$ have positive leading coefficients and $f_{m+1},\ldots,f_n$ have negative leading coefficients for some $0 \leq m \leq n$.
    
    Thus we have $f_1,\ldots,f_m \ll g \ll -f_{m+1}, \ldots, -f_n$, which implies
    \[
        \lambda_1(f_1), \ldots, \lambda_1(f_m), \lambda_2(f_{m+1}), \ldots, \lambda_2(f_n) \leq \lambda_1(g) \leq \lambda_1(f_{m+1}), \ldots, \lambda_1(f_n).
    \]
    Thus if we increase $\lambda_1(f_k)$ by $\epsilon > 0$ for all $m+1 \leq k \leq n$, we still have $f_i \ll g$ for all $i$, and we also have that $t_0 = \lambda_1(g) + \frac{\epsilon}{2}$ is such that
    \[
        \lambda_1(f_1), \ldots, \lambda_1(f_m), \lambda_2(f_{m+1}), \ldots, \lambda_2(f_n) \leq \lambda_1(g) < t_0 < \lambda_1(f_{m+1}), \ldots, \lambda_1(f_n).
    \]
    Letting $\phi \in \SL_2(\R)$ be a rotation which maps $t_0$ to $\infty$ (see Remark~\ref{rem:rotations}), we have that $\phi \cdot g$ has positive leading coefficient, $\phi \cdot f_i$ has positive leading coefficient for all $i$, and $\phi \cdot f_i \ll \phi \cdot g$ for all $i$ by Lemma~\ref{lem:sl2-interlacing}. Thus $\phi \cdot \bm{f}$ is compatible by Theorem~\ref{thm:pos-leading-coeff-compat}, and therefore so is $\bm{f}$. Limiting $\epsilon \to 0$ then implies the desired result.
\end{proof}

\section{Results for $k$-compatible families} \label{sec:basic-k-compat}

In this section we provide some basic results for $k$-compatible families of polynomials, with a specific focus on $3$-compatible families. The goal here is to demonstrate that the conditions under which a convex combination of a $3$-compatible family $\bm{f}$ has a non-simple root are rather restrictive. Specifically, Proposition~\ref{prop:3-compat-exists-root} shows that non-simple roots can only appear at finitely many points, and Proposition~\ref{prop:double-root-signs} shows that a non-simple root at $r$ determines the sign of $f(r)$ for all $f \in \bm{f}$. We will then use these basic results to prove some of our main results in Section~\ref{sec:proof-main}.

\subsection{Basic results via linear algebra}

The main goal of this section is to prove Corollary~\ref{cor:k-compat-2-polys-root}, which allows us to restrict to only checking pairs of polynomials of a $k$-compatible family (for $k \geq 3$) when looking for non-simple roots.

\begin{lemma} \label{lem:root-linear-reduction}
    Fix $\bm{f} = (f_1,\ldots,f_n) \subset \R^d[t]$ and suppose some convex combination of $\bm{f}$ has a root at $r \in \R \cup \{\infty\}$ of multiplicity at least $m$. Then some convex combination of at most $m+1$ polynomials in $\bm{f}$ has a root at $r$ of multiplicity at least $m$.
\end{lemma}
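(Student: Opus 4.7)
The plan is to recast ``having a root of multiplicity at least $m$ at $r$'' as an $m$-dimensional linear constraint on the polynomial, and then apply Carathéodory's theorem in $\R^m$.

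First I would define a linear map $L : \R^d[t] \to \R^m$ such that $p$ has a root of multiplicity at least $m$ at $r$ if and only if $L(p) = 0$. For $r \in \R$, the natural choice is
\[
    L(p) = \bigl(p(r),\, p'(r),\, \ldots,\, p^{(m-1)}(r)\bigr),
\]
and for $r = \infty$, I would instead take $L(p)$ to be the tuple consisting of the coefficients of $t^d, t^{d-1}, \ldots, t^{d-m+1}$ in $p$, which vanish exactly when $\deg(p) \leq d-m$. In either case $L$ is linear and the claimed characterization is immediate from the definitions.

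Next, let $p = \sum_{i=1}^n c_i f_i$ be the given convex combination (so $c_i \geq 0$ and $\sum_i c_i = 1$), with $L(p) = 0$. Then
\[
    0 \;=\; L(p) \;=\; \sum_{i=1}^n c_i\, L(f_i),
\]
so the origin lies in the convex hull of $\{L(f_1), \ldots, L(f_n)\} \subset \R^m$. By Carathéodory's theorem in $\R^m$, we may rewrite $0$ as a convex combination of at most $m+1$ of the vectors $L(f_i)$: there exist indices $i_1,\ldots,i_{m+1}$ and coefficients $c'_{i_j} \geq 0$ with $\sum_j c'_{i_j} = 1$ such that $\sum_j c'_{i_j} L(f_{i_j}) = 0$. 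Setting $\tilde{p} = \sum_j c'_{i_j} f_{i_j}$, linearity of $L$ gives $L(\tilde{p}) = 0$, which by construction of $L$ means $\tilde{p}$ has a root of multiplicity at least $m$ at $r$.

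There is no real obstacle here; the only mild care needed is the $r = \infty$ case, which is handled by switching from the derivative-evaluation functionals to the top-coefficient functionals so that the ``$m$ linear conditions'' interpretation still holds. The argument relies on nothing deeper than the linearity of derivative (resp.\ coefficient) evaluation together with the standard Carathéodory bound, and gives exactly the $m+1$ appearing in the lemma.
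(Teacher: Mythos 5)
Your proof is correct, and it is morally the same argument as the paper's: both recast ``multiplicity $\geq m$ at $r$'' as $m$ linear conditions and then do a Carath\'eodory-type count. The paper phrases the count dually --- it forms the $m \times n$ evaluation matrix $A$, notes $\ker A$ has dimension $\geq n-m$ and meets the open positive orthant, and concludes by a face-dimension count that some kernel vector has at most $m+1$ nonzero nonnegative entries --- whereas you work in the image $\R^m$ and invoke Carath\'eodory's theorem directly, which is cleaner and needs no separate dimension bookkeeping. The one genuine (small) difference is the treatment of $r=\infty$: the paper applies an $\SL_2(\R)$ move to reduce to $r \in \R$, while you simply swap the derivative-evaluation functionals for the top-$m$ coefficient functionals; your version is more elementary and avoids importing the group action, at the cost of a minor case split. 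Both are fine, and both correctly allow the reduced combination to again be the zero polynomial, which the lemma's conventions permit.
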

\begin{proof}
    By possibly applying an $\SL_2(\R)$ perturbation to $\bm{f}$, we may assume that $r \neq \infty$, and by possibly considering a sublist of $\bm{f}$, we may assume that some strictly positive convex combination of $\bm{f}$ has a root at $r$ of multiplicity at least $m$. Consider the $m \times n$ matrix
    \[
        A = \begin{bmatrix}
            f_1(r) & f_2(r) & \cdots & f_n(r) \\
            f_1'(r) & f_2'(r) & \cdots & f_n'(r) \\
            \vdots & \vdots & \ddots & \vdots \\
            f_1^{(m-1)}(r) & f_2^{(m-1)}(r) & \cdots & f_n^{(m-1)}(r)
        \end{bmatrix}.
    \]
    The kernel of the matrix $A$ then corresponds precisely to the set of all linear combinations of $\bm{f}$ with a root at $r$ of multiplicity at least $m$, and this kernel is of dimension at least $n-m$. By assumption this kernel intersects the strict positive orthant $\R_{>0}^n$, and by a dimension count it also intersects some non-zero face of $\R_{>0}^n$ of dimension at most $n-(n-m-1)$. Thus there exists some non-zero vector in $\R_{\geq 0}^n$ with at most $m+1$ non-zero entries which is in the kernel of $A$, and this implies the desired result.
\end{proof}

\begin{lemma} \label{lem:compatible-common-roots}
    Let $\bm{f} = (f_1,\ldots,f_n) \subset \R^d[t]$ be proper and compatible. If some strictly positive convex combination of $\bm{f}$ has a root at $r \in \R \cup \{\infty\}$ of multiplicity at least $m$, then every $f \in \bm{f}$ has a root at $r$ of multiplicity at least $m-1$.
\end{lemma}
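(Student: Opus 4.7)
My plan is to use the $\SL_2(\R)$ action to reduce to $r = 0 \in \R$, then perturb $g$ along the simplex of strictly positive convex combinations in the direction of a fixed $f_i$ and track the splitting of the high-multiplicity root at $0$. By Lemma~\ref{lem:sl2-interlacing} and the linearity of the group action on $\R^d[t]$, compatibility, properness, and multiplicity of $r$ as a common root are all $\SL_2(\R)$-equivariant, so this reduction is harmless. Properness further implies $g \not\equiv 0$ and each $f_i \not\equiv 0$ (since each $f_i$ is itself a convex combination of $\bm{f}$), so $g$ and every $f_i$ have finite vanishing order at $0$. Let $m' \geq m$ be the exact multiplicity of $0$ as a root of $g$, fix $i$, and let $k_i$ be the exact multiplicity of $0$ as a root of $f_i$. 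I will argue by contradiction, assuming $k_i \leq m - 2$ and setting $N := m' - k_i \geq 2$.

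Next I would introduce the one-parameter perturbation $\tilde{g}_s := (1-s) g + s f_i$. Writing $g = \sum_j a_j f_j$ with each $a_j > 0$ and $\sum_j a_j = 1$, one checks that $\tilde{g}_s$ equals $(a_i + s(1 - a_i)) f_i + \sum_{j \neq i} (1-s) a_j f_j$, which is a strictly positive convex combination of $\bm{f}$ for all sufficiently small $|s|$ of either sign; hence $\tilde{g}_s$ is real-rooted by compatibility. Writing $g(t) = c t^{m'} + O(t^{m'+1})$ and $f_i(t) = d_i t^{k_i} + O(t^{k_i+1})$ with $c, d_i \neq 0$ and using $g^{(j)}(0) = 0$ for $j < m'$, a direct derivative computation gives $\tilde{g}_s^{(j)}(0) = s f_i^{(j)}(0)$ for $j < m'$, so for $s \neq 0$ the multiplicity of $0$ as a root of $\tilde{g}_s$ is exactly $k_i$. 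Continuity of roots then forces the remaining $N$ roots of $\tilde{g}_s$ accumulating at $0$ as $s \to 0$ to all be real.

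To locate these $N$ nearby roots I would apply the standard Newton-polygon scaling $t = |s|^{1/N} u$ with $u$ bounded. At this scale, the leading behavior of $\tilde{g}_s(t)/t^{k_i}$ is $s d_i + c t^N$ plus lower-order terms, so the nearby roots are asymptotic to the $N$-th roots of $-s d_i / c$. If $N \geq 3$, the equation $u^N = -s d_i / c$ has at most two real solutions, contradicting real-rootedness of $\tilde{g}_s$. If $N = 2$, both solutions are real precisely when $-s d_i / c > 0$; but this sign condition flips as $s$ passes through $0$, so it cannot hold simultaneously for small $s > 0$ and small $s < 0$. Either case is a contradiction, so $k_i \geq m - 1$, and since $i$ was arbitrary the lemma follows.

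The step I expect to be the main technical obstacle is the Newton-polygon estimate itself: verifying that the intermediate monomials $s d_{k_i + j} t^j$ for $0 < j < N$ and the higher-order monomials coming from $g$ and $f_i$ really are subdominant to $s d_i$ and $c t^N$ at the scale $t \sim |s|^{1/N}$. This reduces to checking that $|s| \cdot |s|^{j/N} = o(|s|)$ for $0 < j < N$ and $|s|^{1 + l/N} = o(|s|)$ for $l \geq 1$, which is routine but must be written out carefully to rule out any delicate cancellation.
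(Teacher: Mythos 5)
Your proof is correct. The paper's own argument relies on the same two-sided perturbation mechanism but packages it differently: after the same $\SL_2(\R)$ reduction, it establishes only the base case $f(r)=0$ for every $f\in\bm f$ (the case $k_i=0$ in your notation) by observing that $g\pm\epsilon f$ must remain real-rooted, and then divides every polynomial in $\bm f$ by $(t-r)$ and inducts on $m$. You bypass the induction by directly tracking the multiplicity gap $N=m'-k_i\ge 2$ via the Newton-polygon scaling $t=|s|^{1/N}u$, which in effect spells out the estimate the paper leaves implicit behind the phrase asserting $f(r)=0$. Both arguments therefore reduce to the same fact: the $N$ roots splitting off the high-multiplicity root of $\tilde g_s$ are asymptotic to the $N$-th roots of $-s d_i/c$, and these cannot all be real for both signs of $s$ when $N\ge 2$. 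The paper's inductive phrasing is shorter and slicker; your version is more self-contained and uniform in $N$, and it makes explicit the dominant-balance and root-stability steps that are, as you correctly identify, where the real content lies (one would also invoke a Rouch\'e-type estimate to pass from the leading-order balance to the actual roots, but this is the same implicit step the paper takes).
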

\begin{proof}
    By possibly applying an $\SL_2(\R)$ perturbation to $\bm{f}$, we may assume that $r \neq \infty$. If $m \leq 1$ then the result is trivial, so we assume $d \geq m \geq 2$. Let $g \not\equiv 0$ denote the strictly positive convex combination of $\bm{f}$ which has a root at $r$ of multiplicity at least $m$. By assumption $g \pm \epsilon f$ is real-rooted for all $f \in \bm{f}$ and all $\epsilon > 0$ small enough. This implies $f(r) = 0$ for all $f \in \bm{f}$. Now divide all $f \in \bm{f}$ by $(t-r)$ and reduce $d$ and $m$ by 1, and the result follows by induction on $m$.
\end{proof}

\begin{proposition} \label{prop:compat-2-polys-root}
    Let $\bm{f} = (f_1,\ldots,f_n) \subset \R^d[t]$ be proper and compatible. If some convex combination of $\bm{f}$ has a root at $r \in \R \cup \{\infty\}$ of multiplicity at least $m$, then some convex combination of at most $2$ polynomials in $\bm{f}$ has a root at $r$ of multiplicity at least $m$.
\end{proposition}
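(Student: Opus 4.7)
My plan is to combine Lemma~\ref{lem:compatible-common-roots} with an elementary sign argument to cut down the number of polynomials appearing in the combination from ``many'' to at most two. First I will apply an $\SL_2(\R)$ perturbation as in the previous proofs to reduce to the case $r \neq \infty$. Next, let $g = \sum_i a_i f_i$ be the given convex combination vanishing at $r$ to order at least $m$, and restrict attention to the sublist $\bm{f}' \subset \bm{f}$ consisting of those $f_i$ with $a_i > 0$. This sublist inherits both properness and compatibility from $\bm{f}$ (any convex combination of $\bm{f}'$ is a convex combination of $\bm{f}$ with zeros in the remaining coordinates), and on $\bm{f}'$ the combination $g$ is strictly positive.

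Applying Lemma~\ref{lem:compatible-common-roots} to $\bm{f}'$, every $f_i \in \bm{f}'$ vanishes at $r$ to order at least $m-1$, so I can factor
\[
    f_i(t) = (t-r)^{m-1}\, h_i(t)
\]
with $h_i \in \R^{d-m+1}[t]$. Then $g = (t-r)^{m-1} \sum_i a_i h_i$, and the vanishing of $g$ at $r$ to order at least $m$ forces $\sum_i a_i h_i(r) = 0$. If some $h_i(r) = 0$, then $f_i$ by itself is a convex combination of one polynomial vanishing at $r$ to order at least $m$, and I am done.

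Otherwise every $h_i(r)$ is nonzero, and since the $h_i(r)$ sum to zero against the strictly positive weights $a_i$, there must exist indices $i,j$ with $h_i(r) > 0$ and $h_j(r) < 0$. Choosing $b,c > 0$ with $b + c = 1$ and $b\,h_i(r) + c\,h_j(r) = 0$ yields
\[
    b f_i + c f_j \;=\; (t-r)^{m-1}\bigl(b h_i + c h_j\bigr),
\]
which vanishes at $r$ to order at least $m$, giving the desired convex combination of (at most) two polynomials from $\bm{f}$.

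I do not expect a real obstacle: essentially all of the content—that each polynomial in the supporting sublist shares a root at $r$ of multiplicity nearly as large as $g$—is delivered directly by Lemma~\ref{lem:compatible-common-roots}. Once that reduction is in hand, the remainder is just the observation that a strictly positive combination of nonzero reals summing to zero must have values of both signs, which supplies the needed two-polynomial witness.
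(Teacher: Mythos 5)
Your proof is correct. It follows the same overall reduction as the paper: pass to the sublist with strictly positive weights, apply Lemma~\ref{lem:compatible-common-roots} to conclude each $f_i$ vanishes to order $m-1$ at $r$, and divide out $(t-r)^{m-1}$. Where you diverge is in the final step: the paper invokes Lemma~\ref{lem:root-linear-reduction} (the linear-algebra lemma about the kernel of the derivative-evaluation matrix intersecting a low-dimensional face of the positive orthant) to drop from $n$ polynomials to two, whereas you replace that with a direct sign argument: a strictly positive convex combination of the real numbers $h_i(r)$ vanishes, so either some $h_i(r)=0$ (one-polynomial witness) or there are values of both signs, supplying a two-polynomial witness with positive weights. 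Your route is marginally more self-contained here, since it does not re-invoke Lemma~\ref{lem:root-linear-reduction} after the multiplicity has been reduced to one; the paper's route, on the other hand, has already built the general-$m$ machinery and simply reuses it with $m=1$. Both are valid, and both ultimately rest on the same key idea of dividing by $(t-r)^{m-1}$ to reduce to a simple-root evaluation condition.
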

\begin{proof}
    By possibly applying an $\SL_2(\R)$ perturbation to $\bm{f}$, we may assume that $r \neq \infty$, and by possibly considering a sublist of $\bm{f}$, we may assume that some strictly positive convex combination of $\bm{f}$ has a root at $r$ of multiplicity at least $m$. By Lemma~\ref{lem:compatible-common-roots}, every $f \in \bm{f}$ has a root at $r$ of multiplicity at least $m-1$. Now divide all $f_i \in \bm{f}$ by $(t-r)^{m-1}$ to obtain $g_i$, and define $\bm{g} = (g_1,\ldots,g_n) \subset \R^{d-m+1}[t]$ so that some strictly positive convex combination of $\bm{g}$ has a root at $r$ (of multiplicity at least $1$). Lemma~\ref{lem:root-linear-reduction} then implies some convex combination of at most $2$ polynomials in $\bm{g}$ has a root at $r$ of multiplicity at least $1$. This implies some non-negative linear combination of at most $2$ polynomials in $\bm{f}$ has a root at $r$ of multiplicity at least $m$. 
\end{proof}

\begin{corollary} \label{cor:k-compat-2-polys-root}
    Let $\bm{f} = (f_1,\ldots,f_n) \subset \R^d[t]$ be proper and $k$-compatible, and fix $m \leq k-1$. If some convex combination of $\bm{f}$ has a root at $r \in \R \cup \{\infty\}$ of multiplicity at least $m$, then some convex combination of at most $2$ polynomials in $\bm{f}$ has a root at $r$ of multiplicity at least $m$.
\end{corollary}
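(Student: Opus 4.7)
The plan is to reduce to the fully compatible setting already handled by Proposition~\ref{prop:compat-2-polys-root}. The key observation is that $k$-compatibility combined with the bound $m \leq k-1$ allows us to pass to a small sub-family on which we have genuine compatibility, not merely $k$-compatibility.

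Concretely, I would proceed as follows. Starting from the assumption that some convex combination of $\bm{f}$ has a root at $r \in \R \cup \{\infty\}$ of multiplicity at least $m$, I would first apply Lemma~\ref{lem:root-linear-reduction} to extract a sub-family $\bm{f}' \subseteq \bm{f}$ with $|\bm{f}'| \leq m+1$ such that some convex combination of $\bm{f}'$ still has a root at $r$ of multiplicity at least $m$. Because $m \leq k-1$, we have $|\bm{f}'| \leq m+1 \leq k$, so every convex combination of $\bm{f}'$ is in particular a convex combination of at most $k$ polynomials in $\bm{f}$, and therefore real-rooted by $k$-compatibility. Hence $\bm{f}'$ is (fully) compatible. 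Moreover $\bm{f}'$ is proper, since any convex combination of $\bm{f}'$ equal to $0$ would also be a convex combination of $\bm{f}$ equal to $0$, contradicting properness of $\bm{f}$.

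At this point $\bm{f}'$ satisfies the hypotheses of Proposition~\ref{prop:compat-2-polys-root}, so applying that proposition to $\bm{f}'$ yields a convex combination of at most $2$ polynomials in $\bm{f}'$ with a root at $r$ of multiplicity at least $m$. Since $\bm{f}' \subseteq \bm{f}$, this is in particular a convex combination of at most $2$ polynomials in $\bm{f}$, which is exactly the conclusion we want.

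There is no genuine obstacle here; the work has already been done in Lemma~\ref{lem:root-linear-reduction} and Proposition~\ref{prop:compat-2-polys-root}. The only point that deserves care is confirming that the hypothesis $m \leq k-1$ is precisely what makes the arithmetic $m+1 \leq k$ work, so that $k$-compatibility of $\bm{f}$ upgrades to full compatibility of the extracted sub-family $\bm{f}'$; this is the sole substantive step beyond a direct citation of the previous two results.
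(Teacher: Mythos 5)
Your proof is correct and matches the paper's own argument: apply Lemma~\ref{lem:root-linear-reduction} to drop to at most $m+1 \leq k$ polynomials, observe that $k$-compatibility gives full compatibility of this sub-family, and invoke Proposition~\ref{prop:compat-2-polys-root}. The paper's version is terser and leaves the properness-of-the-subfamily check implicit, but the reasoning is the same.
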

\begin{proof}
    By Lemma~\ref{lem:root-linear-reduction}, some convex combination of at most $m+1 \leq k$ polynomials in $\bm{f}$ has a root at $r$ of multiplicity at least $m$. The desired result then follows from $k$-compatibility and Proposition~\ref{prop:compat-2-polys-root}.
\end{proof}

\subsection{Non-simple roots of $3$-compatible families}

The goal of this section is to prove (in in Proposition~\ref{prop:3-compat-exists-root} and Proposition~\ref{prop:double-root-signs}) the restrictions on non-simple roots of $3$-compatible families which were claimed at the start of Section~\ref{sec:basic-k-compat}.

\begin{corollary} \label{cor:3-compat-multiple-roots}
    Let $\bm{f} = (f_1,\ldots,f_n) \subset \R^d[t]$ be proper and $3$-compatible. If some convex combination of $\bm{f}$ has a non-simple root at $r \in \R \cup \{\infty\}$, then some convex combination of at most $2$ polynomials in $\bm{f}$ has a non-simple root at $r$.
\end{corollary}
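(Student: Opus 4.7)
The plan is to deduce this immediately from Corollary~\ref{cor:k-compat-2-polys-root} by specializing the parameters. Recall that a non-simple root is by definition a root of multiplicity at least $2$, so the hypothesis that some convex combination has a non-simple root at $r$ is exactly the hypothesis of Corollary~\ref{cor:k-compat-2-polys-root} with $m = 2$. Since $\bm{f}$ is $3$-compatible, the constraint $m \leq k - 1$ becomes $2 \leq 3 - 1 = 2$, which holds.

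Concretely, I would invoke Corollary~\ref{cor:k-compat-2-polys-root} with $k = 3$ and $m = 2$. It produces a convex combination of at most $2$ polynomials in $\bm{f}$ with a root at $r$ of multiplicity at least $2$, which is precisely a non-simple root at $r$. No further work is required: properness and $3$-compatibility of $\bm{f}$ transfer to the hypotheses of Corollary~\ref{cor:k-compat-2-polys-root} verbatim, and the $\SL_2(\R)$ reduction handling the case $r = \infty$ has already been absorbed into the proof of that corollary.

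Since the result is a direct specialization, there is no real obstacle. The only conceptual remark worth making is that the bound $m \leq k-1$ in Corollary~\ref{cor:k-compat-2-polys-root} is exactly what forces the restriction to \emph{non-simple} (rather than, say, triple) roots here: pushing to $m = 3$ would require $4$-compatibility, which we do not assume. Thus the corollary as stated is the sharpest statement available directly from the preceding machinery, and the proof is a one-line appeal to Corollary~\ref{cor:k-compat-2-polys-root}.
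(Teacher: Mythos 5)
Your proof is correct and matches the paper's own argument exactly: both invoke Corollary~\ref{cor:k-compat-2-polys-root} with $k=3$ and $m=2$. The additional remarks about why $m \leq k-1$ is tight are accurate but not needed.
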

\begin{proof}
    Immediately follows from Corollary~\ref{cor:k-compat-2-polys-root} with $k=3$ and $m=2$.
\end{proof}

\begin{proposition} \label{prop:3-compat-exists-root}
    Let $\bm{f} = (f_1,\ldots,f_n) \subset \R^d[t]$ be proper and $3$-compatible. If some convex combination of $\bm{f}$ has a non-simple root at $r \in \R \cup \{\infty\}$, then some $f \in \bm{f}$ has a root at $r$.
\end{proposition}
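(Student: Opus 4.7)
The plan is to combine Corollary~\ref{cor:3-compat-multiple-roots} with a local perturbation argument. By that corollary, some convex combination $h = af + bg$ of at most two polynomials $f, g \in \bm{f}$ already has a non-simple root at $r$; applying a suitable element of $\SL_2(\R)$ I may additionally assume $r \in \R$. If $a = 0$ or $b = 0$ then the remaining polynomial has $r$ as a root and we are done, so I focus on the case $a, b > 0$ and aim to derive a contradiction from the additional assumption $f(r) \neq 0$ and $g(r) \neq 0$.

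Under that assumption, $a f(r) + b g(r) = 0$ with $a, b > 0$ forces $f(r)$ and $g(r)$ to have opposite signs, so $q(r) := g(r) - f(r) \neq 0$. Since $\bm{f}$ is $3$-compatible the pair $(f, g)$ is in particular $2$-compatible, so HKO (Theorem~\ref{thm:hko}) guarantees that $p_s := (1-s)f + sg$ is real-rooted for every $s \in [0, 1]$. Setting $s^* := b/(a+b)$, the polynomial $p_{s^*}$ is proportional to $h$ and hence has a root of some multiplicity $m \geq 2$ at $r$; I would then factor $p_{s^*}(t) = (t-r)^m k(t)$ with $k(r) \neq 0$.

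The core step is a local analysis near $(s, t) = (s^*, r)$. Writing $\sigma := s - s^*$, we have $p_s(t) = (t-r)^m k(t) + \sigma q(t)$, and by Newton--Puiseux the $m$ roots of $p_s$ coalescing to $r$ as $\sigma \to 0$ satisfy to leading order $(t - r)^m \approx -\sigma \, q(r)/k(r)$, a nonzero real constant multiple of $\sigma$. For $m \geq 2$ this equation cannot have all $m$ solutions real for small $\sigma$ of both signs: when $m = 2$ only one sign of $\sigma$ produces two real solutions, and when $m \geq 3$ no sign produces $m$ real $m$-th roots of a nonzero real number. In either case there exist arbitrarily small $\sigma \neq 0$ for which $p_s$ has non-real roots near $r$, contradicting real-rootedness of $p_s$ and forcing $f(r) = 0$ or $g(r) = 0$.

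The main obstacle I anticipate is making the Puiseux step rigorous: one must verify that the leading-order equation above genuinely governs the local root structure and cannot be repaired by higher-order terms. This reduces to checking that the Newton polygon of $p_s(t)$ expanded around $(s^*, r)$ has a single non-trivial edge of slope $1/m$, which follows immediately from $q(r) \neq 0$ and $k(r) \neq 0$. Once this is established, the contradiction and the desired conclusion follow, completing the proof.
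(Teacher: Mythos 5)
Your proof is correct and begins exactly as the paper does: you apply Corollary~\ref{cor:3-compat-multiple-roots} to reduce to a two-polynomial convex combination $h = af + bg$ with a non-simple root at $r$, use $\SL_2(\R)$ to take $r \in \R$, and dispose of the degenerate case $a=0$ or $b=0$. Where you diverge is in the final step. The paper finishes in one line by invoking Lemma~\ref{lem:compatible-common-roots}, which for a proper compatible pair with a strictly positive combination having a double root at $r$ directly yields $f(r) = g(r) = 0$. Your argument instead re-derives this special case from scratch via a local analysis of $p_s = (1-s)f + sg$ near $(s^*, r)$, using that $s^* = b/(a+b) \in (0,1)$ so that $\sigma = s - s^*$ can be taken small of either sign while keeping $p_s$ real-rooted by $2$-compatibility (HKO). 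This is the same underlying mechanism the paper uses inside its (terse) proof of Lemma~\ref{lem:compatible-common-roots}, where ``$g \pm \epsilon f$ real-rooted implies $f(r)=0$'' is asserted without elaboration; you supply the missing local argument. Your Newton--Puiseux step is sound as described (the Newton polygon has a single edge from $(0,1)$ to $(m,0)$ since $q(r), k(r) \neq 0$), though a slightly lighter route is available: since $\partial_s P(s^*,r) = q(r) \neq 0$, the implicit function theorem writes the zero locus near $(s^*,r)$ as $s = S(t)$ with $S(t) - s^* = -(t-r)^m k(t)/q(t)$ vanishing to order exactly $m$, from which the count of real solutions to $S(t)=s$ (at most $2$ for $m$ even, exactly $1$ for $m$ odd) immediately contradicts real-rootedness of $p_s$ for at least one sign of $\sigma$.
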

\begin{proof}
    By Corollary~\ref{cor:3-compat-multiple-roots},
    some convex combination of at most $2$ polynomials in $\bm{f}$ has a non-simple root at $r$. The desired result then follows by restricting to these two polynomials and applying Lemma~\ref{lem:compatible-common-roots} if necessary.
\end{proof}

\begin{lemma} \label{lem:3-compat-triple-root}
    Let $\bm{f} = (f_1,\ldots,f_n) \subset \R^d[t]$ be proper and $3$-compatible. If some convex combination of at most $3$ polynomials in $\bm{f}$ has a root at $r$ of multiplicity at least $3$, then every $f \in \bm{f}$ has a root at $r$.
\end{lemma}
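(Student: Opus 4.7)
The plan is to reduce the hypothesis to a convex combination of at most two polynomials of $\bm{f}$ with a root of multiplicity $m \geq 3$ at $r$, and then derive $f(r) = 0$ for arbitrary $f \in \bm{f}$ via a local perturbation argument against 3-compatibility.

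First, by applying an appropriate $\SL_2(\R)$ transformation (which preserves properness, 3-compatibility, and real-rootedness), I would assume $r \in \R$. The given convex combination is over a sublist $(f_{i_1}, f_{i_2}, f_{i_3}) \subseteq \bm{f}$, which is itself proper and 3-compatible, hence compatible (being 3-compatible with only three polynomials). Proposition~\ref{prop:compat-2-polys-root} applied to this sublist then yields a convex combination $g' = a f_1 + b f_2$ of at most two polynomials of $\bm{f}$ (with $a + b = 1$, $a, b \geq 0$; the case $b = 0$ corresponds to a single polynomial) having a root at $r$ of multiplicity exactly $m$ for some $m \geq 3$. Write $g'(t) = (t - r)^m c(t)$ with $c(r) \neq 0$. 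When $b > 0$, Lemma~\ref{lem:compatible-common-roots} applied to the compatible pair $(f_1, f_2)$ already gives $f_1(r) = f_2(r) = 0$ without further work.

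For any other $f \in \bm{f}$, I would consider the one-parameter family
\[
    p_\epsilon(t) := (1 - \epsilon)\, g'(t) + \epsilon\, f(t),
\]
which is a convex combination of at most three polynomials of $\bm{f}$ and hence real-rooted for every $\epsilon \in [0, 1]$ by 3-compatibility. Supposing for contradiction that $f(r) \neq 0$, expanding near $r$ gives
\[
    p_\epsilon(r + x) = \epsilon\,[f(r) + O(x)] + (1 - \epsilon)\,[c(r)\, x^m + O(x^{m+1})].
\]
The Newton polygon of this bivariate expansion has a single slope from $(0,1)$ to $(m, 0)$, so by Puiseux's theorem the $m$ roots of $p_\epsilon$ bifurcating from $r$ are asymptotically $r + \epsilon^{1/m} y_k + o(\epsilon^{1/m})$, where $y_1, \ldots, y_m$ are the $m$ simple roots of the leading-order equation $f(r) + c(r)\, y^m = 0$. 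Since $m \geq 3$, at most two of the $y_k$ are real (exactly one when $m$ is odd; zero or two when $m$ is even). The set of bifurcating roots is closed under complex conjugation (they all limit to the real point $r$), so the number of non-real bifurcating roots is even and at least $m - 2 \geq 1$, hence at least two. This contradicts real-rootedness of $p_\epsilon$, forcing $f(r) = 0$.

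The main obstacle I expect is making the Puiseux / Newton-polygon step rigorous: one must confirm that higher-order corrections do not push the non-real leading solutions $y_k$ onto the real line for small $\epsilon > 0$. Since these leading roots are all simple (because $f(r) \neq 0$ and $c(r) \neq 0$), a standard implicit-function-theorem argument applied branch-by-branch handles this. It is also worth noting that the argument uses the hypothesis $m \geq 3$ essentially: for $m = 2$, the quadratic $y^2 = -f(r)/c(r)$ may admit two real solutions, so the perturbation could remain real-rooted and no contradiction would follow.
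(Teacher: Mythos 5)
Your proposal is correct and follows essentially the paper's own argument: reduce via Proposition~\ref{prop:compat-2-polys-root} (applied to the compatible sublist of three polynomials) to a convex combination $g'$ of at most two members of $\bm{f}$ with a root of multiplicity at least $3$ at $r$, then perturb by $\epsilon f$ and invoke $3$-compatibility; your Newton-polygon/Puiseux bifurcation analysis supplies the detail that the paper leaves implicit when it asserts ``this implies every $f \in \bm{f}$ has a root at $r$.'' One small remark: the appeal to Lemma~\ref{lem:compatible-common-roots} requires both $a$ and $b$ strictly positive (otherwise the combination is not a strictly positive convex combination of the pair), but that case split is unnecessary anyway, since $p_\epsilon = (1-\epsilon)g' + \epsilon f$ is still a convex combination of at most two polynomials of $\bm{f}$ when $f \in \{f_1,f_2\}$, so the bifurcation argument already covers those two as well.
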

\begin{proof}
    By possibly applying an $\SL_2(\R)$ perturbation to $\bm{f}$, we may assume that $r \neq \infty$. By Proposition~\ref{prop:compat-2-polys-root}, some convex combination of at most $2$ polynomials in $\bm{f}$ has a root at $r$ of multiplicity at least $3$. Without loss of generality, we may assume some convex combination $a_1 f_1 + a_2 f_2$ has a root at $r$ of multiplicity at least $3$. By $3$-compatibility $a_1 f_1 + a_2 f_2 + \epsilon f$ is real-rooted for all $f \in \bm{f}$ and $\epsilon > 0$, and this implies every $f \in \bm{f}$ has a root at $r$.
\end{proof}

\begin{proposition} \label{prop:double-root-signs}
    Let $\bm{f} = (f_1,\ldots,f_n) \subset \R^d[t]$ be proper and $3$-compatible, such that there is no $r \in \R \cup \{\infty\}$ for which $f(r) = 0$ for all $f \in \bm{f}$. If $g$ is a convex combination of at most $3$ polynomials in $\bm{f}$ such that $g$ has a non-simple root at $r \in \R$, then $g''(r) \neq 0$ and $g''(r) \cdot f(r) \leq 0$ for all $f \in \bm{f}$.
\end{proposition}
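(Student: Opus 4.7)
The plan is to handle the two conclusions ($g''(r)\neq 0$ and the sign inequality) separately, in both cases leveraging that any sublist of at most three polynomials of a $3$-compatible $\bm{f}$ is itself compatible. For $g''(r) \neq 0$, I would argue by contradiction: if $g''(r) = 0$, then $g$ has a root of multiplicity at least $3$ at $r$, and since $g$ is a convex combination of at most three polynomials of $\bm{f}$, Lemma~\ref{lem:3-compat-triple-root} forces every $f \in \bm{f}$ to vanish at $r$, contradicting the standing hypothesis.

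For the sign statement, fix $f \in \bm{f}$ with $f(r) \neq 0$ (otherwise there is nothing to prove). The key move is to replace $g$ by a two-term convex combination having the same double root and the same sign of second derivative at $r$. Concretely, let $\bm{g} \subseteq \bm{f}$ be the support of $g$; since $|\bm{g}| \leq 3$, Proposition~\ref{prop:compat-2-polys-root} applied to $\bm{g}$ yields a convex combination $p$ of at most two polynomials in $\bm{g}$ with a double root at $r$, and the first part of the current proposition applied to $p$ gives $p''(r) \neq 0$. To compare signs I would travel along the segment $h_s := (1-s)g + sp$ for $s \in [0,1]$. Each $h_s$ is a convex combination of polynomials of $\bm{g}$ (hence a convex combination of at most three polynomials of $\bm{f}$, so real-rooted) with $h_s(r) = h_s'(r) = 0$, so $h_s''(r) = (1-s)g''(r) + s\, p''(r)$ is a continuous function of $s$; if $g''(r)$ and $p''(r)$ had opposite signs it would vanish at some $s_0 \in (0,1)$, giving $h_{s_0}$ a triple root at $r$ and a contradiction via Lemma~\ref{lem:3-compat-triple-root}. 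Hence $p''(r) \cdot g''(r) > 0$.

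The final step is a local discriminant argument. The sublist of $\bm{f}$ consisting of the support of $p$ together with $f$ has size at most three and is therefore compatible, so $q_\lambda := (1-\lambda) p + \lambda f$ is real-rooted for every $\lambda \in [0,1]$. Because $p(r) = p'(r) = 0$, the quadratic Taylor approximation of $q_\lambda$ at $r$ has discriminant $\lambda^2 (f'(r))^2 - 2\lambda f(r)\bigl((1-\lambda)p''(r) + \lambda f''(r)\bigr) = -2\lambda f(r) p''(r) + O(\lambda^2)$. If, for contradiction, $g''(r) \cdot f(r) > 0$, then by the previous paragraph $p''(r) \cdot f(r) > 0$ as well, so this discriminant is strictly negative for all sufficiently small $\lambda > 0$: the former double root of $p$ at $r$ splits into a complex conjugate pair, while by continuity the remaining real roots of $p$ perturb to nearby real roots of $q_\lambda$. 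Thus $q_\lambda$ has two fewer real roots than its degree, contradicting real-rootedness.

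The main obstacle worth flagging is the bookkeeping in this last step: one must be sure that the two roots lost near $r$ genuinely reduce the real-root count of $q_\lambda$, rather than being compensated by roots arriving from infinity. I would resolve this by first applying an $\SL_2(\R)$ perturbation to reduce to the case where every polynomial in $\bm{f}$ has degree exactly $d$, after which continuity of roots for a family of polynomials of fixed degree provides a clean count of real versus complex roots.
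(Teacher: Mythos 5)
Your proposal is correct and follows essentially the same route as the paper: reduce via Corollary~\ref{cor:3-compat-multiple-roots} (or, in your variant, Proposition~\ref{prop:compat-2-polys-root} applied to the support of $g$) to a two-term combination $h$ with a double root at $r$, show $g''(r)$ and $h''(r)$ are nonzero and of the same sign by excluding triple roots along the segment from $g$ to $h$, and then deduce $h''(r)\cdot f(r)\leq 0$ from real-rootedness of $h+\epsilon f$. The only difference is that you make the paper's final implication explicit via the Taylor/discriminant computation and flag the degree-drop subtlety (handled by an $\SL_2(\R)$ normalization), whereas the paper asserts this step without elaboration.
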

\begin{proof}
    Without loss of generality, let $g = a_1 f_1 + a_2 f_2 + a_3 f_3$ be a convex combination with a non-simple root at $r \in \R$. By Corollary~\ref{cor:3-compat-multiple-roots}, without loss of generality we may assume that $h = b_1 f_1 + b_2 f_2$ is also a convex combination with a non-simple root at $r$. By Lemma~\ref{lem:3-compat-triple-root} and our assumptions on $\bm{f}$, the multiplicity of $r$ in any convex combination of $g$ and $h$ equals $2$, which implies $g''(r),h''(r) \neq 0$ and $g''(r) \cdot h''(r) > 0$. Now by $3$-compatibility, $h + \epsilon f$ is real-rooted for all $f \in \bm{f}$ and $\epsilon > 0$. This implies $h''(r) \cdot f(r) \leq 0$ for all $f \in \bm{f}$. Since $h''(r) \cdot g''(r) > 0$, this implies the desired result.
\end{proof}

\section{Proof of Theorem~\ref{thm:main}: $(1) \iff (2)$} \label{sec:proof-main}

In this section we prove Theorem~\ref{thm:main-compat-perturb} which is the main technical result from which most of our results are derived. This theorem gives an explicit way to perturb a proper $3$-compatible family so that all convex combinations have simple real roots. We use it here to directly prove the $(1) \iff (2)$ case of Theorem~\ref{thm:main}.

\subsection{The simple roots case}

We first handle the much easier case in which the product of all polynomials of a given $3$-compatible family has simple roots. For this case, the basic results of Section~\ref{sec:basic-k-compat} essentially immediately imply the $(1) \iff (2)$ case of Theorem~\ref{thm:main}.

\begin{proposition} \label{prop:3-compat-real-rooted}
    Let $\bm{f} = (f_1,\ldots,f_n) \subset \R^d[t]$ be proper and $3$-compatible. If every convex combination of at most $2$ polynomials in $\bm{f}$ has only simple roots, then $\bm{f}$ is compatible and every convex combination of $\bm{f}$ has only simple real roots.
\end{proposition}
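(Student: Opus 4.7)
The plan is a connectedness argument on the simplex $\Delta(\bm{f}) \subset \R^d[t]$. Let $U \subset \Delta(\bm{f})$ denote the set of convex combinations of $\bm{f}$ that are real-rooted and have only simple roots in $\R \cup \{\infty\}$. It suffices to show that $U = \Delta(\bm{f})$, since this immediately gives compatibility together with the claimed simple-root conclusion. I will show $U$ is nonempty, open, and closed in $\Delta(\bm{f})$; since $\Delta(\bm{f})$ is convex hence connected, this forces $U = \Delta(\bm{f})$.

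Nonemptiness is immediate: each vertex $f_i$ lies in $U$, since $f_i$ is a trivial convex combination of at most two polynomials in $\bm{f}$ and hence has only simple roots in $\R \cup \{\infty\}$ by hypothesis. Openness in $\Delta(\bm{f})$ follows from standard continuity of roots: if $q$ has only simple roots in $\R \cup \{\infty\}$, then perturbations of $q$ in $\R^d[t]$ have roots moving continuously in a projective sense, so small perturbations still have only simple real roots. To avoid notational issues associated with degree drops, one can first apply an $\SL_2(\R)$ rotation (Remark~\ref{rem:rotations}) to move infinity to a generic point where no finitely many specified polynomials have a root, reducing to ordinary continuity on $\R$.

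The key step is closedness. Suppose $q_n \in U$ with $q_n \to q \in \Delta(\bm{f})$. Since real-rootedness in $\R^d[t]$ (with roots at infinity permitted) is a closed condition, $q$ is real-rooted. If $q$ had a non-simple root at some $r \in \R \cup \{\infty\}$, then because $\bm{f}$ is proper and $3$-compatible, Corollary~\ref{cor:3-compat-multiple-roots} would produce a convex combination of at most two polynomials in $\bm{f}$ with a non-simple root at $r$, directly contradicting the hypothesis. Hence $q \in U$.

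The main obstacle is purely notational: handling potential roots at infinity under perturbation. This is cleanly resolved by invoking the $\SL_2(\R)$ action to place infinity away from any roots of the finitely many polynomials under consideration, so that all root continuity arguments take place on $\R$. Once $U = \Delta(\bm{f})$ is established by the open/closed/nonempty argument above, every convex combination of $\bm{f}$ is in particular real-rooted, so $\bm{f}$ is compatible and every convex combination has only simple real roots, completing the proof.
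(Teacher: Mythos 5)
Your proof is correct, and it is essentially the same argument as the paper's, repackaged as a topological connectedness statement. The paper's version invokes Corollary~\ref{cor:3-compat-multiple-roots} once to conclude no convex combination can have a non-simple root at any $r \in \R \cup \{\infty\}$, and then argues directly: if some convex combination $g$ had non-real roots, interpolating along the segment from a vertex $f$ (which has simple real roots) to $g$ would force a collision on $\R \cup \{\infty\}$ at some intermediate point, a contradiction. Your open-closed-connected formulation on $\Delta(\bm{f})$ is exactly this continuity argument in abstract form: Corollary~\ref{cor:3-compat-multiple-roots} gives closedness of $U$, projective continuity of roots gives openness, and the vertices give nonemptiness. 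Both proofs rest on the same two ingredients and are of comparable length; yours is arguably a touch more careful about roots at infinity, since the paper's phrase ``non-simple real root'' should really read ``non-simple root in $\R \cup \{\infty\}$'' to cover potential degree drops, which your framing handles uniformly from the start.
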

\begin{proof}
    By Corollary~\ref{cor:3-compat-multiple-roots}, the real roots of every convex combination of $\bm{f}$ are simple. Now suppose some convex combination of $\bm{f}$ has non-real roots, call it $g$. Since $f \in \bm{f}$ is real-rooted with only simple roots by assumption, real coefficients and continuity of roots imply some convex combination of $f$ and $g$ must have a non-simple real root. This is a contradiction, and thus every convex combination of $\bm{f}$ is real-rooted.
\end{proof}

\begin{corollary} \label{cor:simple-3-compat}
    Let $\bm{f} = (f_1,\ldots,f_n) \subset \R^d[t]$ be proper and $3$-compatible. If the roots of $\prod_{i=1}^n f_i \in \R^{nd}[t]$ are simple, then $\bm{f}$ is compatible and every convex combination of $\bm{f}$ has only simple real roots.
\end{corollary}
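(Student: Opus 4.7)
The plan is to reduce to Proposition~\ref{prop:3-compat-real-rooted}. That proposition already gives the conclusion once we know that every convex combination of at most two polynomials in $\bm{f}$ has only simple roots, so the whole task is to upgrade the simple-roots hypothesis on $\prod_i f_i$ to this pairwise statement.

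First I would dispose of the degenerate case $n=1$ (where $\bm{f}$ is itself real-rooted with simple roots by hypothesis) and then, for $n\geq 2$, fix any convex combination $g = a f_i + b f_j$ with $a,b \geq 0$ and $a+b=1$. Since $\bm{f}$ is $3$-compatible it is in particular pairwise compatible, so $g$ is real-rooted; hence any non-simple root $r$ of $g$ is real. The goal is to argue that no such $r$ can exist.

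Next I would apply Proposition~\ref{prop:3-compat-exists-root} to the (proper, $3$-compatible) family $\bm{f}$: some $f_k \in \bm{f}$ must vanish at $r$. Because $\prod_i f_i$ has only simple roots, this $f_k$ is uniquely determined and has a simple root at $r$. I would then split into two cases. If $k \notin \{i,j\}$, then $f_i(r), f_j(r)$ are both nonzero, and Proposition~\ref{prop:double-root-signs} (whose common-root hypothesis is automatic here, since no two polynomials of $\bm{f}$ share a root) yields $g''(r) \neq 0$ and $f_i(r) \cdot g''(r), f_j(r) \cdot g''(r) \leq 0$. Hence $f_i(r)$ and $f_j(r)$ have a common strict sign, which together with $a f_i(r) + b f_j(r) = g(r) = 0$ and $a,b\geq 0$ forces $a=b=0$, a contradiction. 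If instead $k \in \{i,j\}$, say $k=i$, then $f_i(r)=0$ forces $b f_j(r)=0$; either $b=0$, in which case $g$ is a positive multiple of $f_i$ and so cannot have a non-simple root at $r$, or $f_j(r)=0$, contradicting the simple-roots assumption on $\prod_i f_i$ (recall $i \neq j$).

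Having ruled out non-simple roots of every convex combination $a f_i + b f_j$, I would then invoke Proposition~\ref{prop:3-compat-real-rooted} verbatim to conclude that $\bm{f}$ is compatible and that every convex combination of $\bm{f}$ has only simple real roots. The main obstacle, such as it is, is bookkeeping the three configurations of the root $r$ relative to the pair $(f_i, f_j)$ and the full family $\bm{f}$; each individually is a one-line consequence of the tools already developed in Section~\ref{sec:basic-k-compat}, but care is needed to ensure the hypotheses of Proposition~\ref{prop:double-root-signs} (no common root of all of $\bm{f}$) are in force, which follows for free from simplicity of the roots of $\prod_i f_i$ once $n \geq 2$.
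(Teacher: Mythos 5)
Your proof has a genuine, though fixable, gap: the claim that ``any non-simple root $r$ of $g$ is real'' is false under the paper's conventions. The polynomial $g = af_i + bf_j$, viewed as an element of $\R^d[t]$, could have a non-simple root at $\infty$ (meaning $\deg g \le d-2$), which can happen when $f_i, f_j$ have leading coefficients of opposite sign. The hypothesis of Proposition~\ref{prop:3-compat-real-rooted} that all pairwise convex combinations have ``only simple roots'' does need to cover roots at infinity (the continuity-of-roots argument in its proof would otherwise fail if some combination had degree $\le d-2$), and the paper's own proof of this corollary explicitly takes $r\in\R\cup\{\infty\}$. Since Proposition~\ref{prop:double-root-signs}, which you invoke, is stated only for $r\in\R$, you must first dispose of $r=\infty$: pre-compose with an $\SL_2(\R)$ rotation moving $\infty$ to a finite point (this preserves properness, $3$-compatibility, and simplicity of the roots of $\prod_i f_i$, since the $\SL_2(\R)$ action commutes with taking products up to the appropriate change of ambient degree), or else argue with lemmas whose statements already cover $\infty$, such as Lemma~\ref{lem:compatible-common-roots} and Proposition~\ref{prop:3-compat-exists-root}.

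Beyond that, your route is more elaborate than the paper's. The paper observes directly that if $af_1+bf_2$ has a non-simple root at $r$, then either one of $a,b$ vanishes (so $f_1$ or $f_2$ itself has a non-simple root at $r$), or else Lemma~\ref{lem:compatible-common-roots} with $m=2$ forces both $f_1$ and $f_2$ to vanish at $r$; in every case $f_1 f_2$ acquires a non-simple root, a contradiction. This sidesteps Proposition~\ref{prop:3-compat-exists-root}, Proposition~\ref{prop:double-root-signs}, and the resulting sign bookkeeping entirely. Your case analysis via the second-derivative sign constraint is valid and illustrates that the sign machinery of Section~\ref{sec:basic-k-compat} can substitute for Lemma~\ref{lem:compatible-common-roots} here, but it does not buy anything over the shorter argument.
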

\begin{proof}
    So as to get a contradiction by Proposition~\ref{prop:3-compat-real-rooted}, suppose without loss of generality that some convex combination $af_1 + bf_2$ has a non-simple root at $r \in \R \cup \{\infty\}$. Thus either $f_1$ or $f_2$ has a non-simple root at $r$, or by Lemma~\ref{lem:compatible-common-roots} both $f_1$ and $f_2$ have a root at $r$. This implies $f_1 \cdot f_2$ has a non-simple root at $r$, a contradiction.
\end{proof}

\subsection{The non-simple roots case}

The case where some convex combinations of the polynomials of a $3$-compatible family $\bm{f}$ have non-simple roots is more difficult to handle. To do so, we prove Theorem~\ref{thm:main-compat-perturb} by utilizing the restrictions on non-simple roots proven in Section~\ref{sec:basic-k-compat}. The main idea is: because there are finitely many non-simple root locations and because the sign of all $f \in \bm{f}$ is determined at a non-simple root, we can make the non-simple roots simple via the perturbation $\bm{f} \mapsto \bm{f} + \epsilon \cdot \sum_{f \in \bm{f}} f$. This is done explicitly in Lemma~\ref{lem:two-real-roots} and in the proof of Theorem~\ref{thm:main-compat-perturb}. We finally prove the $(1) \iff (2)$ case of Theorem~\ref{thm:main} for proper $\bm{f}$ in Corollary~\ref{cor:main-proper-case}.

We first need the following result from \cite{cucker1989alternate} which gives a more refined description of the continuity of the roots (in $\C \cup \{\infty\}$) of polynomials in $\C^d[t]$. Let $\|\cdot\|$ denote any fixed norm on the vector space $\C^d[t]$.

\begin{theorem}[\cite{cucker1989alternate}, Theorem 3] \label{thm:continuous-roots}
    Let $f \in \C^d[t]$ be a non-zero complex polynomial with distinct finite roots $r_1,\ldots,r_k \in \C$ and respective multiplicities $m_1,\ldots,m_k$. Let $U_1,\ldots,U_k$ be disjoint discs centered at $r_1,\ldots,r_k$ with radii $\epsilon > 0$ and contained in the open disc centered at $0$ with radius $\epsilon^{-1}$. There exists $\delta > 0$ such that for $g \in \C^d[t]$, $\|g - f\| < \delta$ implies $g$ has $m_i$ roots in each $U_i$ and $\deg(g) - \deg(f)$ finite roots with modulus greater than $\epsilon^{-1}$, all counted with multiplicity. (Thus $g$ has $d - \deg(f)$ roots, including $\infty$, with modulus greater than $\epsilon^{-1}$.)
\end{theorem}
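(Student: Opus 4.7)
The plan is a fairly direct double application of Rouché's theorem, once on small circles around each finite root of $f$ and once on the large circle $\Gamma_\epsilon := \{t \in \C : |t| = \epsilon^{-1}\}$. Since all norms on the finite-dimensional vector space $\C^d[t]$ are equivalent, convergence $g \to f$ in $\|\cdot\|$ is the same as coefficientwise convergence, which in particular implies $g(t) \to f(t)$ uniformly on every compact subset of $\C$.

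First I would apply Rouché around each $U_i$. On the compact boundary $\partial U_i$ the polynomial $f$ is nonzero, since $U_1,\ldots,U_k$ are disjoint and $U_i$ contains only the single root $r_i$; hence $\mu_i := \min_{t \in \partial U_i} |f(t)| > 0$. By uniform convergence on $\partial U_i$, there exists $\delta_i > 0$ such that $\|g - f\| < \delta_i$ implies $|g(t) - f(t)| < \mu_i \leq |f(t)|$ on $\partial U_i$. Rouché's theorem then yields that $g$ has exactly $m_i$ zeros inside $U_i$, counted with multiplicity.

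Next I would repeat the argument on $\Gamma_\epsilon$. All finite roots of $f$ lie in $\bigcup_i U_i$, which is strictly inside the open disc of radius $\epsilon^{-1}$, so $f$ is nonzero on $\Gamma_\epsilon$ and $\mu := \min_{t \in \Gamma_\epsilon} |f(t)| > 0$. Uniform convergence on $\Gamma_\epsilon$ gives a $\delta_0 > 0$ such that $\|g-f\| < \delta_0$ forces $|g(t)-f(t)| < |f(t)|$ on $\Gamma_\epsilon$, and so Rouché gives that $g$ has exactly $\deg(f)$ zeros in the open disc $\{|t| < \epsilon^{-1}\}$.

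Setting $\delta := \min(\delta_0, \delta_1, \ldots, \delta_k)$ and combining: the discs $U_i$ already account for $\sum_i m_i = \deg(f)$ zeros of $g$, so no further zeros of $g$ lie in the annular region between $\bigcup_i U_i$ and $\Gamma_\epsilon$. Since $g$ has $\deg(g)$ finite zeros in total, exactly $\deg(g) - \deg(f)$ of them must have modulus greater than $\epsilon^{-1}$. Counting $d - \deg(g)$ ``roots at infinity'' (i.e., the degree deficit of $g$ in $\C^d[t]$) as having modulus greater than $\epsilon^{-1}$ as well, the total is $(d - \deg(g)) + (\deg(g) - \deg(f)) = d - \deg(f)$, as claimed. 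The only real subtlety here is the bookkeeping for roots at infinity, which amounts to the observation that any drop from $\deg(g)$ to $\deg(f)$ under the limit $g \to f$ is realized by finite roots of $g$ escaping toward infinity; everything else is a standard Rouché argument.
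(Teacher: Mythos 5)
The paper does not prove this statement itself; it simply cites it as Theorem~3 of the external reference (whose title, ``An alternate proof of the continuity of the roots of a polynomial,'' signals that the cited paper's own argument deliberately avoids Rouch\'e in favor of an elementary sequential one). So there is no in-paper proof to compare against, and I can only evaluate your argument on its own terms.

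Your double-Rouch\'e argument is correct and is the standard complex-analytic route to this result. The local applications on $\partial U_i$ give exactly $m_i$ zeros in each $U_i$, the application on $\Gamma_\epsilon = \{|t|=\epsilon^{-1}\}$ gives exactly $\deg(f)$ zeros of $g$ strictly inside that circle, and the degree bookkeeping $(d-\deg(g)) + (\deg(g)-\deg(f)) = d-\deg(f)$ closes the argument. Two small points worth making explicit when you write this up: first, the Rouch\'e hypothesis $|g-f| < |f|$ on $\Gamma_\epsilon$ automatically forces $g \neq 0$ on $\Gamma_\epsilon$, which is what upgrades ``modulus $\geq \epsilon^{-1}$'' to the stated ``modulus $> \epsilon^{-1}$''; second, the application on $\Gamma_\epsilon$ also silently establishes $\deg(g) \geq \deg(f)$ for $\|g-f\|<\delta$ (since $g$ must have at least $\deg(f)$ zeros inside the circle), which justifies treating $\deg(g)-\deg(f)$ as a nonnegative count. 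Neither is a gap, but both deserve a sentence.
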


\begin{corollary} \label{cor:continuous-stability}
    Let $f \in \C^d[t]$ be a non-zero complex polynomial with no roots in a closed and bounded set $S \subset \C$. There exists $\delta > 0$ such that for $g \in \C^d[t]$, $\|g - f\| < \delta$ implies $g$ has no roots in $S$.
\end{corollary}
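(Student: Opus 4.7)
The plan is to deduce Corollary~\ref{cor:continuous-stability} directly from Theorem~\ref{thm:continuous-roots} by making a careful choice of the parameter $\epsilon$ appearing there. The roots of $f$ are finitely many points $r_1, \ldots, r_k \in \C$, none of which lies in $S$. Since $S$ is closed and $\{r_1,\ldots,r_k\}$ is finite and disjoint from $S$, each distance $d_i := \mathrm{dist}(r_i, S)$ is strictly positive. Since $S$ is bounded, we also have a finite quantity $M := \sup_{s \in S} |s|$.

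First I would pick $\epsilon > 0$ small enough to simultaneously satisfy three conditions: (i) $\epsilon < \tfrac{1}{2} \min_{i \neq j} |r_i - r_j|$, so that the open discs $U_i$ of radius $\epsilon$ around the $r_i$ are pairwise disjoint (this condition is vacuous if $k = 1$); (ii) $\epsilon < \min_i d_i$, so that each $U_i$ is disjoint from $S$; and (iii) $\epsilon^{-1} > M$ and $\epsilon^{-1} > \max_i |r_i|$, so that $S$ and each $U_i$ are contained in the open disc of radius $\epsilon^{-1}$ centered at $0$. All three demands reduce to choosing $\epsilon$ smaller than a specific positive constant, so such $\epsilon$ exists.

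Next I would invoke Theorem~\ref{thm:continuous-roots} with this $\epsilon$ to obtain a $\delta > 0$ such that every $g \in \C^d[t]$ with $\|g - f\| < \delta$ has all of its $\deg(f)$ ``inherited'' roots (with multiplicity) inside $\bigcup_i U_i$, while any remaining finite roots of $g$ have modulus greater than $\epsilon^{-1}$. Then by condition (ii) no root of the first type lies in $S$, and by condition (iii) no root of the second type lies in $S$ either, since $|s| \leq M < \epsilon^{-1}$ for every $s \in S$. Hence $g$ has no roots in $S$, as desired.

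I do not expect any real obstacle here: the content of the corollary is precisely a packaging of Theorem~\ref{thm:continuous-roots} in the form needed later. The only thing to be careful about is the bookkeeping in the choice of $\epsilon$ — in particular, ensuring that $\epsilon$ is small enough both to separate the $U_i$ from each other \emph{and} from $S$, and also that $\epsilon^{-1}$ is large enough to contain all of $S$ and all the $U_i$ simultaneously. Once these are arranged, the conclusion is immediate.
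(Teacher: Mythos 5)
Your proof is correct and follows essentially the same route as the paper's: both choose $\epsilon$ small enough that the discs $U_i$ of Theorem~\ref{thm:continuous-roots} avoid $S$ and that $S$ lies inside the disc of radius $\epsilon^{-1}$, then apply the theorem. The paper's version is simply more terse; your itemized bookkeeping of the three conditions on $\epsilon$ spells out what the paper leaves implicit.
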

\begin{proof}
    Since $S$ is closed and bounded, we can choose $\epsilon$-discs about the roots of $f$ with $\epsilon > 0$ small enough so that the associated discs $U_i$ in Theorem~\ref{thm:continuous-roots} are disjoint from $S$ and so that $S$ is contained in the open disc centered at $0$ with radius $\epsilon^{-1}$. Applying Theorem~\ref{thm:continuous-roots} then gives the desired result.
\end{proof}

\begin{lemma} \label{lem:two-real-roots}
    Let $B$ be a closed and bounded complex $\epsilon$-disc about $r \in \R$ with $\epsilon > 0$, and fix $c_0 > 0$. Let $f,g \in \R^d[t]$ be such that:
    \begin{itemize}
        \item $f + c \cdot g$ has exactly two roots (counting multiplicity) in $B$ for all $c \in [0,c_0]$,
        \item $f$ has two real roots (counting multiplicity) in $B$,
        \item $f''(t) \cdot g(t) < 0$ for all $t \in B \cap \R$.
    \end{itemize}
    Then $f + g$ has two real and simple roots in $B$.
\end{lemma}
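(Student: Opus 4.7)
The plan is to track the two roots of $p_c := f + cg$ in $B$ continuously in $c$ and show they remain real and simple throughout $(0, c_0]$. First, the symmetry $(f,g) \mapsto (-f,-g)$ preserves all three hypotheses and the set of roots of every $p_c$, so I may assume $g > 0$ and hence $f'' < 0$ on $B \cap \R$. Then $f$ is strictly concave on $B \cap \R$, and writing its two real roots there (counted with multiplicity) as $r_1 \leq r_2$, one has $f \geq 0$ on $[r_1,r_2]$ and $f < 0$ on the complement. The key consequence is that
\[
    p_c(r_i) = c\,g(r_i) > 0 \quad \text{for every } c > 0 \text{ and } i \in \{1,2\},
\]
so neither $r_1$ nor $r_2$ is ever a root of $p_c$ once $c > 0$.

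Next I would invoke Theorem~\ref{thm:continuous-roots} to track the two roots of $p_c$ in $B$ as continuous branches $s_1(c), s_2(c)$ with $s_1(0) = r_1$ and $s_2(0) = r_2$. A local analysis at $c = 0$ shows that for all sufficiently small $c > 0$ these branches are real and simple with $s_1(c) < r_1 \leq r_2 < s_2(c)$: in the case $r_1 < r_2$ this is the implicit function theorem at each simple root of $f$, and in the case $r_1 = r_2 =: r_0$ it follows from the Puiseux-type expansion $s_{1,2}(c) = r_0 \pm \sqrt{-2c\,g(r_0)/f''(r_0)} + o(\sqrt{c})$, which is real precisely because $f''(r_0)\,g(r_0) < 0$.

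The heart of the proof is then a clopen argument on
\[
    J := \{c \in (0, c_0] : s_1(c) < r_1 \leq r_2 < s_2(c)\text{ and both branches are real simple}\}.
\]
Nonemptiness and openness follow from the previous paragraph together with the implicit function theorem. Closedness is the main content: if $c_n \in J$ with $c_n \to c^* \in (0, c_0]$, then continuity yields real limits with $s_1(c^*) \leq r_1$ and $s_2(c^*) \geq r_2$, and the strict inequalities at $r_1, r_2$ are forced by $p_{c^*}(r_j) = c^*\,g(r_j) > 0$. Hence $s_1(c^*) < s_2(c^*)$ are two distinct real roots, which by hypothesis exhaust the two roots of $p_{c^*}$ in $B$, forcing each to be simple. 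So $J = (0, c_0]$, and taking $c = 1 \in (0, c_0]$ yields the stated conclusion for $f + g$.

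The main obstacle I anticipate is ruling out the scenario where $s_1(c)$ and $s_2(c)$ collide into a double real root at some $c^* \in (0, c_0]$ and then leave the real axis. This is precisely what the strict separation $s_1 < r_1 \leq r_2 < s_2$ prevents, since any such collision would require a branch to cross the closed region $[r_1, r_2]$, on which $p_c$ is strictly positive for every $c > 0$.
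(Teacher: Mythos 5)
Your proof is correct, and it rests on the same core observation as the paper's: once $f''\cdot g$ has a strict sign on $B\cap\R$, the polynomial $p_c = f + cg$ acquires, for every $c>0$, a strict sign on the closed interval $[r_1,r_2]$ between the two roots of $f$, and this together with the standing hypothesis that $p_c$ has exactly two roots in $B$ forces those roots to remain real and separated. The difference is in bookkeeping. The paper's proof argues (after the opposite normalization $f''>0$, $g<0$) that $p_c$ retains the sign $+$ of $f$ at the endpoints $a,b$ of $B\cap\R$ because no root can cross $\partial B$, so $p_c$ exhibits the sign pattern $(+,-,+)$ on $[a,b]$; two sign changes produce two distinct real roots, which are then simple by the root count. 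You instead track the two roots continuously from $c=0$, verify via the implicit function theorem (or the Puiseux expansion $s_{1,2}(c)= r_0 \pm \sqrt{-2c\,g(r_0)/f''(r_0)} + o(\sqrt c)$ in the double-root case) that for small $c>0$ they split as $s_1(c)<r_1\le r_2<s_2(c)$, and run a clopen argument on $(0,c_0]$, with closedness secured by $p_{c^*}(r_j)=c^*g(r_j)\ne 0$. This is a bit more machinery for the same conclusion, but it is carried out correctly and arguably sidesteps the slightly informal ``roots cannot enter or leave $B$'' step in the paper. One small point to fix: your closing sentence takes $c=1\in(0,c_0]$, which tacitly assumes $c_0\ge 1$; what your argument (and the paper's) actually establishes is that $f + c_0 g$ has two real simple roots in $B$, and indeed that is how the lemma is invoked in the proof of Theorem~\ref{thm:main-compat-perturb}, so you should conclude with $c=c_0$ rather than $c=1$.
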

\begin{proof}
    By possibly negating both $f$ and $g$, we may assume that $f''(t) > 0$ for all $t \in B \cap \R = [a,b]$. Since $f$ has exactly two real roots $r_1 \leq r_2$ in $[a,b]$, this then implies $f(t) > 0$ for all $t \in [a,r_1) \cup (r_2,b]$ and $f(t) \leq 0$ for all $t \in [r_1,r_2]$. Further, $f''(t) \cdot g(t) < 0$ implies $g(t) < 0$ for $t \in [a,b]$, which in turn implies $(f + c \cdot g)(t) < 0$ for all $c \in (0,c_0]$ and all $t \in [r_1,r_2]$.

    Now note that since $B$ is closed and by continuity of roots, no roots of $f + c \cdot g$ can enter or leave $B$ as $c$ goes from $0$ to $c_0$. Thus $f + c \cdot g$ has sign pattern $(+,-,+)$ on $[a,b]$ for $c \in (0,c_0)$, and therefore $f + c_0 \cdot g$ has two real and simple roots in $B$.
\end{proof}

We now state and prove the main technical result of the paper. Note that for the case that the polynomials in $\bm{f}$ share no roots, Theorem~\ref{thm:simplex-main} is a direct corollary of this result.

\begin{theorem} \label{thm:main-compat-perturb}
    Let $\bm{f} = (f_1,\ldots,f_n) \subset \R^d[t]$ be proper and $3$-compatible, such that there is no $r \in \R \cup \{\infty\}$ for which $f(r) = 0$ for all $f \in \bm{f}$. Further, define $\bar{f} := \sum_{i=1}^n f_i$ and $\bm{g} = (f_1 + \epsilon \bar{f}, \ldots, f_n + \epsilon \bar{f})$. For all $\epsilon > 0$ small enough, $\bm{g}$ is compatible and every convex combination of $\bm{g}$ has only simple real roots.
\end{theorem}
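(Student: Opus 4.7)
The plan is to combine a sign-based analysis ruling out non-simple real roots of convex combinations of $\bm{g}$ with a connectedness argument on the coefficient simplex. First, I would apply a generic rotation $\phi \in \SO_2(\R)$ (Remark~\ref{rem:rotations}) so that no $f_i$ has a root at $\infty$; this commutes with the $\bar{f}$-operation since $\overline{\phi \cdot \bm{f}} = \phi \cdot \bar{f}$, and by Proposition~\ref{prop:3-compat-exists-root} it confines every candidate non-simple-root location of convex combinations of $\bm{f}$ to $\R$. Fix such a location $r$: Corollary~\ref{cor:3-compat-multiple-roots} supplies a convex combination $h$ of at most two polynomials of $\bm{f}$ with a non-simple root at $r$, and Proposition~\ref{prop:double-root-signs} then gives $h''(r) \neq 0$ and $h''(r) \cdot f(r) \leq 0$ for every $f \in \bm{f}$. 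Thus the values $\{f_i(r)\}$ share a common sign, with at least one strictly nonzero by the no-common-root hypothesis.

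Every convex combination of $\bm{g}$ is, up to a positive scalar, of the form $p_b = \sum_i b_i f_i$ with $b_i \geq \epsilon/(1+n\epsilon) > 0$. If such a $p_b$ had a non-simple real root at $r$, then $0 = p_b(r) = \sum_i b_i f_i(r)$ with all summands of the same sign would force $b_i f_i(r) = 0$ for each $i$; strict positivity of the $b_i$ would then yield $f_i(r) = 0$ for every $i$, contradicting the no-common-root hypothesis. Hence no convex combination of $\bm{g}$ has a non-simple real root. Moreover, Lemma~\ref{lem:3-compat-triple-root} applied to $f_i$ viewed as a trivial convex combination rules out roots of multiplicity $\geq 3$ in any $f_i$, so each $f_i$ has only simple or double real roots. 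At each double root $r$ of $f_i$ the sign analysis yields $f_i''(r) \cdot \bar{f}(r) < 0$; choosing a small complex disc $B$ about $r$ via Theorem~\ref{thm:continuous-roots}, Lemma~\ref{lem:two-real-roots} (applied with $f = f_i$, $g = \bar{f}$, $c_0 = \epsilon$) then produces two simple real roots of $g_i = f_i + \epsilon \bar{f}$ in $B$ for all sufficiently small $\epsilon$, while continuity of roots near the simple roots of $f_i$ accounts for the remaining roots. Hence each $g_i$ has $d$ simple real roots.

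To upgrade to full compatibility, define $S := \{a \in \Delta^{n-1} : q_a := \sum_i a_i g_i \text{ is real-rooted}\}$. Corollary~\ref{cor:continuous-stability} shows $S$ is closed, since having a non-real root is preserved under small perturbations. Because every real root of every $q_a$ is simple (by the non-simple-root exclusion), Theorem~\ref{thm:continuous-roots} shows simple real roots deform to simple real roots under small perturbations of $a$, so $S$ is also open. The previous paragraph gives $e_i \in S$ for every vertex $e_i$, and $\Delta^{n-1}$ is connected, so $S = \Delta^{n-1}$. The main obstacle is the double-root analysis via Lemma~\ref{lem:two-real-roots}: one must verify uniformly in $\epsilon$ that exactly two roots of $f_i + c \bar{f}$ remain in each chosen disc throughout $c \in [0, \epsilon]$, which requires carefully combining Theorem~\ref{thm:continuous-roots} with the sign condition $f_i'' \cdot \bar{f} < 0$ on $B \cap \R$. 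A secondary subtlety is ruling out non-simple roots at infinity for convex combinations of $\bm{g}$, which is handled by choosing $\phi$ in the initial rotation so that $\phi \cdot \infty$ avoids $\bigcup_i \{\text{roots of } f_i\}$ and then applying the same sign argument in the rotated coordinates.
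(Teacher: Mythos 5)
Your proof is correct, and it takes a genuinely different route from the paper. Where the paper proceeds by contradiction---assuming a bad convex combination $H_k$ of $\bm{g}$ exists for each $\epsilon = k^{-1}$, extracting a convergent subsequence $h_k \to h$, and then using Lemma~\ref{lem:two-real-roots} around the double roots of $h$ to show $H_k$ actually has simple real roots for large $k$---you observe directly that every convex combination of $\bm{g}$, after rescaling, is a \emph{strictly positive} convex combination $p_b = \sum_i b_i f_i$ of $\bm{f}$, and the sign constraint of Proposition~\ref{prop:double-root-signs} then forces $p_b(r) = \sum_i b_i f_i(r) = 0$ with all summands weakly of one sign at any candidate non-simple root $r$, hence $f_i(r) = 0$ for all $i$, which is excluded. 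This eliminates non-simple real roots for \emph{all} $\epsilon > 0$, not just small ones, and confines the role of Lemma~\ref{lem:two-real-roots} to the vertices $g_i$ alone. Your open-and-closed connectedness argument on the simplex then plays the role that Proposition~\ref{prop:3-compat-real-rooted} plays in the paper's proof; both are implementations of the same topological principle. Overall your argument is arguably more transparent about \emph{why} the $+\epsilon\bar{f}$ perturbation works (it forces strict positivity of the coefficient vector), at the cost of having to separately verify openness and closedness of the real-rooted locus. Two small notes: closedness of $S$ should be justified via Theorem~\ref{thm:continuous-roots} directly rather than Corollary~\ref{cor:continuous-stability} (the corollary states the contrapositive direction); and the ``secondary subtlety'' at infinity is already resolved by Proposition~\ref{prop:3-compat-exists-root} once the rotation removes roots at $\infty$ from all $f_i$---no separate sign argument in rotated coordinates is needed there.
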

\begin{proof}
    By possibly applying an $\SL_2(\R)$ perturbation to $\bm{f}$, we may assume that $f$ is of degree exactly $d$ for all $f \in \bm{f}$. Thus by Proposition~\ref{prop:3-compat-exists-root}, every convex combination of $\bm{f}$ is of degree either $d$ or $d-1$.
    
    So as to get a contradiction, suppose the desired result does not hold. Then by Proposition~\ref{prop:3-compat-real-rooted}, for every $\epsilon > 0$, either $\bm{g}$ is not $3$-compatible or else some convex combination of at most $2$ polynomials in $\bm{g}$ has a non-simple root. Thus for every $\epsilon = k^{-1}$ for $k \in \N$, let $H_k$ be a convex combination of at most $3$ polynomials\footnote{Using ``at most $3$'' instead of ``at most $2$'' here actually gives something stronger then needed for the simple roots condition of Proposition~\ref{prop:3-compat-real-rooted}.} in $\bm{g}$ which is either not real-rooted or which has a non-simple root. By permuting $\bm{f}$ and passing to a subsequence, we may assume without loss of generality that $H_k = a_k (f_1 + k^{-1} \bar{f}) + b_k (f_2 + k^{-1} \bar{f}) + c_k (f_3 + k^{-1} \bar{f})$, and that $a_k \to a$, $b_k \to b$, $c_k \to c$. By further defining $h_k = a_k f_1 + b_k f_2 + c_k f_3$, we have that $H_k = h_k + k^{-1} \bar{f}$. Finally we define $h = a f_1 + b f_2 + c f_3$, and $3$-compatibility of $\bm{f}$ implies $h$ is real-rooted and $h_k$ is real-rooted for all $k$.

    By Proposition~\ref{prop:double-root-signs} and the fact that no root is shared by every $f \in \bm{f}$, we have $h''(r) \cdot \bar{f}(r) < 0$ for every non-simple root $r$ of $h$. (Note that $h$ does not have a non-simple root at $\infty$ since $\deg(h) \in \{d-1,d\}$.) Let $r_1,\ldots,r_N \in \R$ be the distinct (finite) roots of $h$ with multiplicities $m_1,\ldots,m_N \in \{1,2\}$, let $I$ be the set of all $i$ such that $m_i = 2$, and let $U_1,\ldots,U_N$ be closed complex $\epsilon$-discs about $r_1,\ldots,r_N$ with $\epsilon > 0$ small enough so that:
    \begin{itemize}
        \item $U_1,\ldots,U_N$ are disjoint,
        \item $U_1,\ldots,U_N$ are contained in the open disc centered at $0$ of radius $\epsilon^{-1}$,
        \item for all $i \in I$ we have that $h''(t) \cdot \bar{f}(t) < 0$ for all $t \in U_i \cap \R$.
    \end{itemize}
    Further, let $S$ be the union of all $U_i$ for $i \in I$. By Theorem~\ref{thm:continuous-roots} applied to $h$ and Corollary~\ref{cor:continuous-stability} applied to $h''$, there exists $\delta > 0$ such that for $g \in \C^d[t]$:
    \begin{itemize}
        \item $\|g - h\| < \delta$ implies $g$ has $m_i$ roots (counted with multiplicity) in each $U_i$ and $\deg(g) - \deg(h) \leq 1$ roots with modulus greater than $\epsilon^{-1}$,
        \item $\|g'' - h''\| < \delta$ implies $g''$ has no roots in $S$.
    \end{itemize}
    Now, choose $k$ large enough such that $\|h_k'' - h''\| < \delta$ and $\|h_k + c \bar{f} - h\| < \delta$ for all $c \in [0,k^{-1}]$. By continuity, this implies for all $i \in I$ that $h_k''(t) \cdot \bar{f}(t) < 0$ for all $t \in U_i$. And for all $c \in [0,k^{-1}]$ and $i \in I$, the polynomial $h_k + c \bar{f}$ has exactly two roots in $U_i$. Applying Lemma~\ref{lem:two-real-roots} to $h_k,\bar{f}$ for all $i \in I$, we have that the roots of $H_k = h_k + k^{-1} \bar{f}$ are all real and simple. This is a contradiction, and thus proves the desired result.
\end{proof}

\begin{corollary} \label{cor:main-proper-case}
    Let $\bm{f} = (f_1,\ldots,f_n) \subset \R^d[t]$ be proper and $3$-compatible. Then $\bm{f}$ is compatible.
\end{corollary}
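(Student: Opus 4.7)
The plan is to deduce Corollary~\ref{cor:main-proper-case} from Theorem~\ref{thm:main-compat-perturb} by first reducing to the case where $\bm{f}$ has no common root in $\R \cup \{\infty\}$, and then taking a limit of the perturbation constructed in that theorem.

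For the reduction, suppose there exists $r \in \R \cup \{\infty\}$ with $f(r) = 0$ for every $f \in \bm{f}$. Using Lemma~\ref{lem:sl2-interlacing} I may apply an $\SL_2(\R)$ transformation to assume $r \in \R$, then divide each $f_i$ by $(t-r)$ to obtain $\bm{f}' = (f_1', \ldots, f_n') \subset \R^{d-1}[t]$. Properness of $\bm{f}'$ is preserved, since $\sum a_i f_i' \equiv 0$ would give $\sum a_i f_i = (t-r)\sum a_i f_i' \equiv 0$, contradicting properness of $\bm{f}$; and $3$-compatibility is preserved, since the roots of $\sum a_i f_i$ are exactly those of $\sum a_i f_i'$ together with $r$, so real-rootedness transfers. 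Moreover compatibility of $\bm{f}'$ implies compatibility of $\bm{f}$ by the same factorization. Iterating strictly drops the degree, so after finitely many steps either $d = 0$ (in which case all convex combinations are nonzero real constants by properness, hence trivially real-rooted) or we arrive at a proper, $3$-compatible family with no common root.

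Assuming now that $\bm{f}$ has no common root, Theorem~\ref{thm:main-compat-perturb} furnishes $\epsilon_0 > 0$ such that for every $\epsilon \in (0, \epsilon_0)$ the family $\bm{g}_\epsilon = (f_1 + \epsilon \bar{f}, \ldots, f_n + \epsilon \bar{f})$ is compatible. For any convex combination $p = \sum_i a_i f_i$ of $\bm{f}$, the corresponding convex combination of $\bm{g}_\epsilon$ equals
\[
    \sum_i a_i (f_i + \epsilon \bar{f}) = p + \epsilon \bar{f},
\]
since $\sum_i a_i = 1$, and this polynomial is real-rooted for every $\epsilon \in (0, \epsilon_0)$. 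Because the set of real-rooted polynomials in $\R^d[t]$ is closed (a standard continuity-of-roots fact, obtainable as a direct consequence of Theorem~\ref{thm:continuous-roots}), letting $\epsilon \to 0$ shows that $p$ is real-rooted. Hence $\bm{f}$ is compatible.

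The main technical work has already been carried out in Theorem~\ref{thm:main-compat-perturb}; the present corollary is essentially bookkeeping. The only point requiring care is the common-root reduction, and this is handled by the elementary factorization $\sum a_i f_i = (t-r) \sum a_i f_i'$, which simultaneously preserves properness, $3$-compatibility, and the equivalence of compatibility before and after division.
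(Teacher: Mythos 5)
Your proof is correct and follows the same approach as the paper: divide out simultaneous roots to reach a family with no common root, apply Theorem~\ref{thm:main-compat-perturb}, and pass to the limit $\epsilon \to 0$ using closedness of the set of real-rooted polynomials. You have simply spelled out the bookkeeping (preservation of properness and $3$-compatibility under division by $(t-r)$, and the convexity computation $\sum_i a_i (f_i + \epsilon \bar{f}) = p + \epsilon\bar{f}$) that the paper leaves implicit.
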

\begin{proof}
    By possibly dividing out simultaneous roots, we may assume without loss of generality that there is no $r \in \R \cup \{\infty\}$ for which $f(r) = 0$ for all $f \in \bm{f}$. Thus we may apply Theorem~\ref{thm:main-compat-perturb} for all $\epsilon > 0$ small enough, and by limiting $\epsilon \to 0$ we obtain the desired result.
\end{proof}

\subsection{The non-proper case}

Having proven the $(1) \iff (2)$ case of Theorem~\ref{thm:main} for proper $\bm{f}$ in Corollary~\ref{cor:main-proper-case}, we now handle non-proper $\bm{f}$.

\begin{theorem}
    Let $\bm{f} = (f_1,\ldots,f_n) \subset \R^d[t]$ be $3$-compatible. Then $\bm{f}$ is compatible.
\end{theorem}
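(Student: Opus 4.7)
The plan is to reduce the general $3$-compatible case to the proper $3$-compatible case, where Corollary~\ref{cor:main-proper-case} already applies. Given any non-negative linear combination $g = \sum_i \alpha_i f_i$ with $\alpha_i \geq 0$, I would try to exhibit a proper subfamily $\bm{f}_T \subseteq \bm{f}$ for which $g$ is still a non-negative combination. Since any subfamily of a $3$-compatible family is automatically $3$-compatible, such an $\bm{f}_T$ would be both proper and $3$-compatible, so Corollary~\ref{cor:main-proper-case} would give compatibility of $\bm{f}_T$, and hence real-rootedness of $g$.

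To produce $\bm{f}_T$, I would use a standard Carath\'eodory-style minimization: among all non-negative representations $g = \sum_i \gamma_i f_i$ with $\gamma_i \geq 0$, pick one whose support $T := \{i : \gamma_i > 0\}$ has minimum cardinality. Such a minimum exists since $T \subseteq [n]$. I then need to verify that $\bm{f}_T$ is proper. Suppose not: then there exist $\delta_i \geq 0$ for $i \in T$, not all zero, with $\sum_{i \in T} \delta_i f_i = 0$. Setting $t^* := \min\{\gamma_i/\delta_i : \delta_i > 0\} > 0$, the vector $\gamma' := \gamma - t^* \delta$ (extended by zero outside $T$) has non-negative entries, still represents $g$, and by the definition of $t^*$ has strictly smaller support than $\gamma$, contradicting minimality. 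Hence $\bm{f}_T$ is proper.

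Putting the pieces together, $\bm{f}_T$ is a proper $3$-compatible subfamily of $\bm{f}$, so by Corollary~\ref{cor:main-proper-case} it is compatible, and $g = \sum_{i \in T} \gamma_i f_i$ is real-rooted. As $g$ was an arbitrary non-negative combination of $\bm{f}$, the family $\bm{f}$ is compatible. I do not anticipate a real obstacle here: the analytic and geometric work has been done in Corollary~\ref{cor:main-proper-case}, and the reduction from a non-proper family to a proper subfamily is a routine minimal-support argument in the theory of convex cones, using only that the set of supports is finite and that properness has been defined precisely so that this elimination step terminates.
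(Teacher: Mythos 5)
Your proof is correct and uses essentially the same reduction as the paper, though packaged differently. Both arguments exploit the same observation: if some non-negative combination $\sum_i \delta_i f_i$ vanishes identically, one can subtract a suitable positive multiple of it from any other non-negative representation of $g$ to kill a term while keeping coefficients non-negative. The paper organizes this as an induction on $n$ (peel off one term, invoke the inductive hypothesis on the remaining $n-1$ polynomials, then handle non-strictly-positive combinations by a limiting argument), whereas you run the elimination to exhaustion in one step via a minimal-support choice and then apply Corollary~\ref{cor:main-proper-case} directly to the resulting proper subfamily. Your version has the mild advantage of avoiding the closing limit argument, since every non-negative combination gets a proper-support representation outright; otherwise the two proofs are interchangeable.
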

\begin{proof}
    We prove this by induction on $n$, with the case of $n=3$ being immediate. If $\bm{f}$ is proper, then the desired result follows from Corollary~\ref{cor:main-proper-case}. Otherwise, there exists a convex combination of $\bm{f}$ which is the zero polynomial. That is, we have
    \[
        \tilde{f} = \sum_{k=1}^n c_k f_k = 0.
    \]
    Now let $g = \sum_{k=1}^n a_k f_k \not\equiv 0$ be any strictly positive convex combination of $\bm{f}$. Let $s > 0$ be maximal such that $a_k - s \cdot c_k \geq 0$ for all $k$. Thus $g = g - s \cdot \tilde{f}$ is (up to scalar) a convex combination of at most $n-1$ polynomials in $\bm{f}$. Thus $g$ is real-rooted by induction on $n$. Therefore every strictly positive convex combination of $\bm{f}$ is real-rooted, and by taking limits this implies $\bm{f}$ is compatible.
\end{proof}

\section{Proof of Theorem~\ref{thm:main}: $(2) \implies (3)$} \label{sec:main-2-implies-3}

We now give the remainder of the proof of Theorem~\ref{thm:main}. We first prove some basic root separation lemmas for proper families of $2$ and $3$ polynomials, and then we use these to reduce to the case of all positive leading coefficients. This allows us to utilize Theorem~\ref{thm:pos-leading-coeff-compat}.

Recall that if $f$ is a real-rooted polynomial then we denote its roots (with multiplicity) in $\R$ by $\lambda_1(f) \geq \lambda_2(f) \geq \cdots$.

\subsection{Basic lemmas}

The main result of this section in Lemma~\ref{lem:no-middle-roots-diff-sign} which says how the signs of leading coefficients affect the ordering of roots.

\begin{lemma} \label{lem:different-signs-distinct-roots}
    Let $\{f,g\} \subset \R^d[t]$ be proper and compatible such that $f,g$ are of degree exactly $d$ with leading coefficients of different signs. If all convex combinations of $f,g$ have only simple roots, then $\lambda_1(f) \neq \lambda_1(g)$.
\end{lemma}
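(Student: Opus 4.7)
The plan is to prove the contrapositive: assuming $\lambda_1(f) = \lambda_1(g) =: r$, I will exhibit a convex combination of $f$ and $g$ with a double root at $r$, which contradicts the simplicity hypothesis.

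First I would set up signs. Without loss of generality, $f$ has positive leading coefficient and $g$ has negative leading coefficient. Since all convex combinations (in particular $f$ and $g$) have only simple roots, $r$ is a simple root of both $f$ and $g$. Write $f(t) = (t-r)\tilde f(t)$ and $g(t) = (t-r)\tilde g(t)$. Because $r = \lambda_1(f)$ is the largest root and the leading coefficient of $f$ is positive, $\tilde f$ has no roots larger than $r$ and has positive leading coefficient, so $\tilde f(r) > 0$. Symmetrically, since $g$ has negative leading coefficient and $r$ is its largest root, $\tilde g(r) < 0$. Consequently,
\[
f'(r) = \tilde f(r) > 0 \quad \text{and} \quad g'(r) = \tilde g(r) < 0.
\]

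Next I would run an intermediate-value argument on the one-parameter family $h_s := (1-s)f + sg$ for $s \in [0,1]$. Since both $f$ and $g$ vanish at $r$, every $h_s$ vanishes at $r$. On the other hand,
\[
h_s'(r) = (1-s)f'(r) + sg'(r),
\]
which is strictly positive at $s=0$ and strictly negative at $s=1$. By continuity there is some $s^* \in (0,1)$ with $h_{s^*}'(r) = 0$. Together with $h_{s^*}(r)=0$, this makes $r$ a root of $h_{s^*}$ of multiplicity at least two.

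The only thing left to verify is that $h_{s^*}$ is a genuine polynomial to which the simple-roots hypothesis applies, i.e.\ that $h_{s^*} \not\equiv 0$. This is exactly the properness assumption on $\{f,g\}$. Since $h_{s^*}$ is (up to a positive scalar) a convex combination of $f$ and $g$ with a non-simple root at $r$, this contradicts the hypothesis that every convex combination of $f,g$ has only simple roots, completing the proof. The argument is essentially one page of IVT and sign tracking; no real obstacle arises beyond carefully pinning down the sign of $f'(r)$ and $g'(r)$, which is where the opposite leading-coefficient assumption is used.
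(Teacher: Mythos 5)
Your proof is correct, and it takes a genuinely different route from the paper's. The paper first divides out \emph{all} shared roots of $f$ and $g$ (including $r$), obtaining $\tilde f,\tilde g$ with distinct largest roots, and then runs a root-tracking argument: along the segment $s\tilde f+(1-s)\tilde g$, the largest root must travel from $\lambda_1(\tilde g)$ up through $+\infty$ (because nothing can sit in a certain root-free interval), and hence passes through $r$, forcing a double root. Your argument skips all of that: you only factor out the single root at $r$, pin down the signs $f'(r)>0$ and $g'(r)<0$ directly from the opposite leading coefficients and the fact that $r$ is the largest (simple) root, and then apply the intermediate value theorem to $s\mapsto (1-s)f'(r)+sg'(r)$ to produce $s^*$ with a double root at $r$. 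Properness is invoked in exactly the right place to rule out $h_{s^*}\equiv 0$. This is shorter and more elementary than the paper's version, since it replaces a global tracking of roots (and the detour through the point at infinity) with a local computation of the derivative at a single point; both proofs use the opposite-sign hypothesis, but yours uses it only to fix the signs of $f'(r)$ and $g'(r)$, making the mechanism of the contradiction more transparent.
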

\begin{proof}
    So as to get a contradiction, suppose $\lambda_1(f) = \lambda_1(g) = r$. Let $\tilde{f},\tilde{g} \in \R^{\tilde{d}}[t]$ be the polynomials obtained from $f,g$ respectively after dividing by $(t-r_i)$ for every shared root $r_i$ of $f,g$ (including $r$). Thus $\tilde{f}$ and $\tilde{g}$ have distinct largest roots such that without loss of generality $\lambda_1(\tilde{f}) < \lambda_1(\tilde{g}) < r$. Further, every convex combination of $\tilde{f},\tilde{g}$ is real-rooted, and since $\tilde{f},\tilde{g}$ have leading coefficients of different signs, no roots lie in the interval $I = (\max\{\lambda_1(\tilde{f}), \lambda_2(\tilde{g})\}, \lambda_1(\tilde{g}))$.

    Now consider $h_s = s \cdot \tilde{f} + (1-s) \cdot \tilde{g}$, and let $s_0 \in (0,1)$ be the unique value such that $h_{s_0}$ has a root at $\infty$. As $s$ goes from $0$ to $s_0$, the largest root of $h_s$ must go from $\lambda_1(\tilde{g})$ to $+\infty$, since no roots of $h_s$ lie in $I$ for all $s \in [0,1]$. Thus for some $s' \in (0,s_0)$, we have that $h_{s'}(r) = 0$. But this implies $s' \cdot f + (1-s') \cdot g$ has a double root at $r$, which is a contradiction.
\end{proof}

\begin{lemma} \label{lem:same-sign-distinct-roots}
    Let $\{f,g\} \subset \R^d[t]$ be proper and compatible such that $f,g$ are of degree exactly $d$ with leading coefficients of the same sign. If all non-negative linear combinations of $f,g$ have only simple roots, then $\lambda_2(f) < \lambda_1(g)$.
\end{lemma}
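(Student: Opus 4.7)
The plan is to rule out $\lambda_2(f) \geq \lambda_1(g)$ by contradiction, splitting into the strict and equality subcases. After negating both polynomials if necessary (which preserves all hypotheses), I may assume both leading coefficients are positive, so $g(t) > 0$ for $t > r := \lambda_1(g)$ and $f$ is strictly negative on the open interval between its two largest roots. I will work with the one-parameter family $h_c := cf + g$ for $c \geq 0$; since both leading coefficients are positive, $\deg(h_c) = d$ throughout, and each $h_c$ is a positive multiple of a convex combination of $(f,g)$, hence real-rooted with only simple roots by hypothesis.

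Strict subcase. Suppose $\lambda_2(f) > r$, and set $r_1 := \lambda_2(f)$, $r_2 := \lambda_1(f)$, so $f < 0$ on $(r_1, r_2)$ while $g > 0$ on all of $[r_1, r_2]$. Then $h_c(r_1) = g(r_1) > 0$ and $h_c(r_2) = g(r_2) > 0$ for every $c \geq 0$, whereas at the midpoint $m := (r_1 + r_2)/2$ the value $h_c(m) = cf(m) + g(m)$ tends to $-\infty$ as $c \to \infty$. Thus $h_c$ has no roots in $(r_1, r_2)$ for small $c > 0$ and at least two roots there for large $c$. Since each $h_c$ is real-rooted of fixed degree $d$, no complex root can appear or disappear; and since $r_1, r_2$ remain strictly positive values of $h_c$, no real root can cross the endpoints of the interval. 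The only remaining mechanism by which the root count in $(r_1, r_2)$ can increase is the birth of a double real root inside the interval, which by the continuity-of-roots statement (Theorem~\ref{thm:continuous-roots}) must occur at some critical $c^* > 0$. This contradicts the simple-roots hypothesis applied to $h_{c^*}$.

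Equality subcase. Suppose $\lambda_2(f) = \lambda_1(g) = r$; then $r$ is a common simple root of $f$ and $g$. Since $f$ has positive leading coefficient and $r$ is its second-largest root, $f$ transitions from positive to negative at $r$, giving $f'(r) < 0$; analogously $g'(r) > 0$ at its largest root. Taking $a := g'(r) > 0$ and $b := -f'(r) > 0$, the strictly positive combination $af + bg$ satisfies $(af+bg)(r) = 0$ and $(af+bg)'(r) = af'(r) + bg'(r) = 0$, so $r$ is at least a double root of $af + bg$, once again contradicting the simple-roots hypothesis.

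The main obstacle is the continuity-of-roots argument in the strict subcase: one must rigorously justify that as $c$ varies, the only way for real roots of $h_c$ to appear inside the open interval $(r_1, r_2)$ is the birth of a double real root there. The uniform positivity of $h_c(r_1)$ and $h_c(r_2)$ for all $c > 0$, together with the real-rootedness of $h_c$ at fixed degree $d$, preclude entry from the endpoints or from the complex plane, and Theorem~\ref{thm:continuous-roots} converts this observation into a precise continuity statement. The equality subcase, by contrast, reduces to a short local sign computation.
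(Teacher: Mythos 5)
Your proof is correct and takes a route genuinely different from the paper's in both subcases.

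For the equality subcase $\lambda_2(f) = \lambda_1(g) = r$, the paper rotates a point of $(\lambda_2(f), \lambda_1(f))$ to $+\infty$ via $\SL_2(\R)$ and reduces to Lemma~\ref{lem:different-signs-distinct-roots}; you instead note that $r$ is a common simple root at which $f$ crosses downward ($f'(r) < 0$, since $r$ is its second-largest root) while $g$ crosses upward ($g'(r) > 0$, since $r$ is its largest root), so the strictly positive combination $g'(r)\,f - f'(r)\,g$ has a double root at $r$, directly contradicting the simple-roots hypothesis. This is more elementary and self-contained, sidestepping the $\SL_2(\R)$ rotation and the different-signs lemma entirely.

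For the strict subcase $\lambda_2(f) > \lambda_1(g)$, the paper tracks roots of $s f + (1-s) g$ in the interval $(\lambda_1(g), \lambda_2(f))$; you instead track roots of $h_c = c f + g$ in $(r_1, r_2) = (\lambda_2(f), \lambda_1(f))$, between two consecutive roots of $f$. The payoff of your interval is that the signs of both polynomials on it are immediate: $g > 0$ on all of $[r_1, r_2]$ and $f < 0$ on its interior, which pins down the endpoints via $h_c(r_i) = g(r_i) > 0$ for all $c \geq 0$. Since each $h_c$ is real-rooted of degree exactly $d$, its sorted real roots vary continuously in $c$ and never touch $r_1$ or $r_2$, so the number of roots in $(r_1, r_2)$ is constant in $c$ — yet it is $0$ for small $c$ and at least $2$ for large $c$, a contradiction. (Your closing appeal to a double root at some $c^*$ is a harmless detour: the constancy of the root count already delivers the contradiction without needing to name a mechanism.) By contrast, on the paper's interval $(\lambda_1(g), \lambda_2(f))$ the sign of $f$ depends on how many of the smaller roots $\lambda_3(f), \lambda_4(f), \dots$ fall inside it, a point the paper's argument leaves implicit; your choice of interval avoids this issue altogether.
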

\begin{proof}
    If $\lambda_1(f) \leq \lambda_1(g)$ then the result immediately follows, and thus we may assume that $\lambda_1(g) < \lambda_1(f)$. So as to get a contradiction, suppose that $\lambda_1(g) \leq \lambda_2(f)$.
    
    If $\lambda_1(g) = \lambda_2(f)$, then choose $\phi \in \SL_2(\R)$ which rotates some point in the interval $(\lambda_2(f), \lambda_1(f))$ to $+\infty$. Thus $\phi \cdot f$ and $\phi \cdot g$ have degree $d$, leading coefficients of different signs, and $\lambda_1(\phi \cdot f) = \lambda_1(\phi \cdot g)$. This contradicts Lemma~\ref{lem:different-signs-distinct-roots}.

    Otherwise we have $\lambda_1(g) < \lambda_2(f)$. Let $h_s = s \cdot f + (1-s) \cdot g$ for all $s \in [0,1]$. Since $f,g$ have leading coefficients of the same sign, $h_s$ is of degree exactly $d$ and has no roots in the interval $(\lambda_1(g), \lambda_2(f))$ for all $s \in [0,1]$. Thus by continuity of roots, $h_s$ has no roots in $(\lambda_1(g), +\infty]$ as $s$ goes from $0$ to $1$. This contradicts the fact that $f$ has roots in this interval.
\end{proof}

\begin{lemma} \label{lem:no-middle-roots-diff-sign}
    Let $\{f,g,h\} \subset \R^d[t]$ be proper and compatible such that $f,g,h$ are of degree exactly $d$. Suppose further that $f,g$ have leading coefficients of the same sign, and $h$ has leading coefficient of a different sign. If all convex combinations of $f,g,h$ have only simple roots, then either $\lambda_1(f),\lambda_1(g) < \lambda_1(h)$ or $\lambda_1(h) < \lambda_1(f), \lambda_1(g)$.
\end{lemma}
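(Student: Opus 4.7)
The plan is to assume for contradiction that $\lambda_1(h)$ lies strictly between $\lambda_1(f)$ and $\lambda_1(g)$, and then exhibit an explicit convex combination of $\{f,g,h\}$ with a double root at $\lambda_1(h)$, violating the simple-roots hypothesis. After negating all three polynomials if necessary, I assume $f,g$ have positive leading coefficients and $h$ has negative leading coefficient. Lemma~\ref{lem:different-signs-distinct-roots} applied to $\{f,h\}$ and $\{g,h\}$ shows that $\lambda_1(f),\lambda_1(g),\lambda_1(h)$ are pairwise distinct, so by the $f \leftrightarrow g$ symmetry it suffices to rule out $\lambda_1(f) < \lambda_1(h) < \lambda_1(g)$.

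Under this ordering, applying HKO (Theorem~\ref{thm:hko}) to each pair and unpacking the mixed-sign interlacing convention forces $f \ll g$, $f \ll h$, and $h \ll g$, which yields
\[
    \lambda_2(g) < \lambda_1(f) < \lambda_1(h) < \lambda_1(g), \qquad \lambda_2(h) < \lambda_1(f).
\]
Reading off signs at $t = \lambda_1(h)$ from these inequalities and the known behavior of positive- and negative-leading polynomials on either side of their largest roots, one obtains $f(\lambda_1(h)) > 0$ and $g(\lambda_1(h)) < 0$. Setting
\[
    \alpha := -g(\lambda_1(h)) > 0, \qquad \beta := f(\lambda_1(h)) > 0, \qquad F_0 := \alpha f + \beta g,
\]
the polynomial $F_0$ then vanishes at $\lambda_1(h)$.

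The key step is to show $F_0'(\lambda_1(h)) > 0$. The Wronskian identities $W[f,F_0] = \beta W[f,g]$ and $W[F_0,g] = \alpha W[f,g]$, combined with $f \ll g$ and Lemma~\ref{lem:wronskian}, give $f \ll F_0 \ll g$, which in particular yields $\lambda_2(F_0) \leq \lambda_2(g) < \lambda_1(f) < \lambda_1(h)$. Since $\lambda_1(h)$ is a root of $F_0$ strictly greater than $\lambda_2(F_0)$, it must equal $\lambda_1(F_0)$. Simplicity of this root (inherited from the hypothesis, since $F_0/(\alpha+\beta)$ is a convex combination of $\{f,g,h\}$) together with the positive leading coefficient of $F_0$ then forces $F_0'(\lambda_1(h)) > 0$.

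Finally, consider $F_\gamma := F_0 + \gamma h$ for $\gamma \geq 0$. Every such polynomial satisfies $F_\gamma(\lambda_1(h)) = 0$, and $F_\gamma'(\lambda_1(h)) = F_0'(\lambda_1(h)) + \gamma h'(\lambda_1(h))$. Since $h'(\lambda_1(h)) < 0$ ($h$ negative-leading with simple largest root) and $F_0'(\lambda_1(h)) > 0$, there is a unique $\gamma_0 > 0$ making $F_{\gamma_0}'(\lambda_1(h)) = 0$. After normalizing by $\alpha + \beta + \gamma_0$, this produces a convex combination of $\{f,g,h\}$ with a double root at $\lambda_1(h)$, giving the desired contradiction. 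The main obstacle is the sign analysis establishing $F_0'(\lambda_1(h)) > 0$: without the interlacing bound $\lambda_2(F_0) \leq \lambda_2(g) < \lambda_1(f)$ pinning $\lambda_1(h)$ as the largest root of $F_0$, the sign of $F_0'(\lambda_1(h))$ could flip and the required $\gamma_0$ would fail to be positive.
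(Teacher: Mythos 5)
There is a genuine gap at the step where you ``apply HKO to each pair'' to conclude the interlacing relations among $f,g,h$. The Hermite--Kakeya--Obreschkoff theorem (Theorem~\ref{thm:hko}) requires that \emph{every} real linear combination $af+bg$ be real-rooted, not merely the non-negative ones, and compatibility of $\{f,g,h\}$ only gives you real-rootedness of non-negative combinations. These are genuinely different. For instance, take $f_1=(t-1)(t-4)$ and $f_2=(t-2)(t-3)$, both with positive leading coefficient. The discriminant of $af_1+bf_2$ equals $(a+b)(9a+b)$, which is non-negative for all $a,b\geq 0$, so $(f_1,f_2)$ is a proper compatible pair and every convex combination has simple roots; yet their root sets $\{1,4\}$ and $\{2,3\}$ do not alternate, so neither interlaces the other, and indeed $f_1-2f_2$ has negative discriminant. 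So pairwise compatibility does not yield interlacing, and everything downstream of that step is unsupported: the chain $\lambda_2(g)<\lambda_1(f)<\lambda_1(h)$, the sign determination $g(\lambda_1(h))<0$ (and hence $\alpha>0$), and the bound pinning $\lambda_1(h)$ as the largest root of $F_0$ all rest on it. Without $\lambda_1(F_0)=\lambda_1(h)$, the sign of $F_0'(\lambda_1(h))$ is undetermined and the required $\gamma_0>0$ need not exist.

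The paper avoids HKO here entirely. It considers $p_s=s\,f+(1-s)g$, observes that $p_s$ has positive leading coefficient and no roots exceeding $\lambda_1(g)$ for any $s\in[0,1]$, and uses continuity of the largest root to produce $s'$ with $\lambda_1(p_{s'})=\lambda_1(h)$; then Lemma~\ref{lem:different-signs-distinct-roots} applied to the pair $(p_{s'},h)$ gives the contradiction. Your final construction --- adding $\gamma h$ to $F_0$ to force a double root at $\lambda_1(h)$ --- is in effect a re-proof of Lemma~\ref{lem:different-signs-distinct-roots} in this special case, and it does become valid once one knows $\lambda_1(F_0)=\lambda_1(h)$; but that fact should be obtained from the intermediate-value/continuity argument (which simultaneously shows $\alpha,\beta\geq 0$), not from HKO.
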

\begin{proof}
    So as to get a contradiction, suppose without loss of generality that $\lambda_1(f) \leq \lambda_1(h) \leq \lambda_1(g)$. Let $p_s = s \cdot f + (1-s) \cdot g$ for $s \in [0,1]$. As $s$ goes from $0$ to $1$, the largest root of $p_s$ must go from $\lambda_1(g)$ to $\lambda_1(f)$, since $p_s$ cannot have any roots larger than $\lambda_1(g)$ for all $s \in [0,1]$. Thus for some $s' \in [0,1]$, we have that $p_{s'}(\lambda_1(h)) = 0$. However, this contradicts Lemma~\ref{lem:different-signs-distinct-roots} applied to $p_{s'}$ and $h$.
\end{proof}

\subsection{Reducing to the case of positive leading coefficients}

With the root separation lemmas of the previous section, we first reduce the $(2) \implies (3)$ case of Theorem~\ref{thm:main} to Theorem~\ref{thm:pos-leading-coeff-compat} for $3$-compatible families $\bm{f}$ for which all convex combinations have simple roots. We then prove the result in general using the perturbations described in Theorem~\ref{thm:main-compat-perturb}.

\begin{proposition} \label{prop:sl2-all-pos-leading-coeff}
    Let $\bm{f} = (f_1,\ldots,f_n) \subset \R^d[t]$ be proper and $3$-compatible, such that all convex combinations of any $3$ polynomials in $\bm{f}$ have only simple roots. Then there exists an $\SL_2(\R)$ transformation $\phi$ such that all polynomials in $\phi \cdot \bm{f}$ have degree exactly $d$ and leading coefficients of the same sign.
\end{proposition}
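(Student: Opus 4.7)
The plan is to reduce the problem to finding a real number $r \in \R$ at which every $f_i(r)$ is nonzero and all values $f_i(r)$ share a common sign. Indeed, for $\phi = \left[\begin{smallmatrix} \alpha & \beta \\ \gamma & \delta \end{smallmatrix}\right] \in \SL_2(\R)$ with $\gamma \neq 0$, a direct computation shows that the leading coefficient of $\phi \cdot f_i$ equals $(-\gamma)^d f_i(-\delta/\gamma)$, so a $\phi$ sending a good $r$ to $\infty$ produces the desired conclusion. Invoking Remark~\ref{rem:rotations}, I first apply an $\SO_2(\R)$ rotation so that every $f_i$ has degree exactly $d$; this preserves all hypotheses by Lemma~\ref{lem:sl2-interlacing}. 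Partition $\bm{f} = \bm{f}_+ \sqcup \bm{f}_-$ by sign of leading coefficient, and assume both parts are nonempty (otherwise take $\phi = I$).

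The key step is the following dichotomy: either (I) $\lambda_1(f) < \lambda_1(h)$ for all $f \in \bm{f}_+, h \in \bm{f}_-$, or (II) the reverse strict inequality holds for all such pairs. To prove this, apply Lemma~\ref{lem:no-middle-roots-diff-sign} to every triple $\{f_i, f_j, h\}$ with $f_i, f_j \in \bm{f}_+$ and $h \in \bm{f}_-$, and also to every triple $\{f, h_i, h_j\}$ with $f \in \bm{f}_+$ and $h_i, h_j \in \bm{f}_-$. In each case the lemma forces the odd-one-out to have its largest root on a common side of the two other largest roots; a short consistency check across all such triples then rules out any ``mixed'' configuration where some $h \in \bm{f}_-$ has $\lambda_1(h)$ above the whole of $\bm{f}_+$ while another has $\lambda_1(h)$ below. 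Strictness of the inequalities comes from Lemma~\ref{lem:different-signs-distinct-roots}.

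In Case (I), I pick $r$ in the open interval
\[
    \Bigl( \max\bigl(\max_{f \in \bm{f}_+} \lambda_1(f),\ \max_{h \in \bm{f}_-} \lambda_2(h)\bigr),\ \min_{h \in \bm{f}_-} \lambda_1(h) \Bigr),
\]
which is nonempty because Case (I) gives $\max_f \lambda_1(f) < \min_h \lambda_1(h)$ and Lemma~\ref{lem:same-sign-distinct-roots} applied within $\bm{f}_-$ gives $\max_h \lambda_2(h) < \min_h \lambda_1(h)$. At such an $r$, every $f \in \bm{f}_+$ satisfies $r > \lambda_1(f)$, hence $f(r) > 0$; every $h \in \bm{f}_-$ satisfies $r \in (\lambda_2(h), \lambda_1(h))$, hence $h(r) > 0$ (negative leading coefficient, exactly one root above $r$). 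Case (II) is entirely symmetric and yields $r$ with $f_i(r) < 0$ for all $i$. Finally, take $\phi = \left[\begin{smallmatrix} 0 & -1 \\ 1 & -r \end{smallmatrix}\right] \in \SL_2(\R)$, which sends $r$ to $\infty$, and apply the leading-coefficient formula to conclude.

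The main obstacle is establishing the dichotomy: if a mixed configuration were possible, both the all-positive and all-negative strategies would be simultaneously obstructed, and no suitable $r$ would exist. This is exactly where Lemma~\ref{lem:no-middle-roots-diff-sign} is essential, and is the place where the full strength of $3$-compatibility (rather than mere pairwise compatibility) enters decisively.
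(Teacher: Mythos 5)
Your proof is correct and follows essentially the same route as the paper's: reduce to degree exactly $d$, use Lemma~\ref{lem:different-signs-distinct-roots}, Lemma~\ref{lem:same-sign-distinct-roots}, and Lemma~\ref{lem:no-middle-roots-diff-sign} to locate a point $r$ strictly larger than every $\lambda_1$ of one sign class and strictly inside $(\lambda_2,\lambda_1)$ for every polynomial of the other sign class, then send $r$ to $\infty$ via $\SL_2(\R)$. The only differences are expository: you phrase the root-separation step as a full dichotomy over all pairs while the paper negates WLOG and compares the two maxima $r_\pm$, and you make the leading-coefficient computation $(-\gamma)^d f_i(-\delta/\gamma)$ explicit where the paper leaves it implicit.
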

\begin{proof}
    By $\SL_2(\R)$ perturbation, we may assume that all polynomials in $\bm{f}$ are of degree exactly $d$. Let $r_+$ be the maximum $\lambda_1(f)$ over all $f \in \bm{f}$ which have positive leading coefficient, and let $r_-$ be the maximum $\lambda_1(f)$ over all $f \in \bm{f}$ which have negative leading coefficient. By possibly negating $\bm{f}$, without loss of generality we may assume that $r_+ < r_-$ (with strict inequality by Lemma~\ref{lem:different-signs-distinct-roots}).
    
    Let $f_+ \in \bm{f}$ have positive leading coefficient such that $\lambda_1(f_+) = r_+$, let $f_- \in \bm{f}$ have negative leading coefficient such that $\lambda_1(f_-) = r_-$, and let $f,g \in \bm{f}$ have negative leading coefficients. Then by applying Lemma~\ref{lem:no-middle-roots-diff-sign} to $f,f_-,f_+$, we have that $r_+ < r_-$ implies $r_+ < \lambda_1(f)$. And by applying Lemma~\ref{lem:same-sign-distinct-roots} to any $g,f$, we further have that $\lambda_2(g) < \lambda_1(f)$. Thus there exists $s \in \R$ which is strictly larger than $r_+$ and contained in $(\lambda_2(f), \lambda_1(f))$ for all $f \in \bm{f}$ which have negative leading coefficient. By choosing $\phi \in \SL_2(\R)$ which rotates $s$ to $+\infty$, we obtain the desired result.
\end{proof}

\begin{theorem} \label{thm:section-2-to-3}
    Let $\bm{f} = (f_1,\ldots,f_n) \subset \R^d[t]$ be proper and $3$-compatible. Then there is a real-rooted $g \in \R^d[t]$ such that $f_i \ll g$ for all $i$.
\end{theorem}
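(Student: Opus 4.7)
The plan is to reduce to the positive-leading-coefficient setting of Theorem~\ref{thm:pos-leading-coeff-compat} for a suitable perturbation of $\bm f$, and then to pass to the limit. First, by factoring out common roots and using $\SL_2(\R)$ rotations, I may assume $\bm f$ has no common root in $\R\cup\{\infty\}$ and every $f_i$ has degree exactly $d$: the common-root reduction uses that $3$-compatibility and properness pass to the quotient, together with the identity $W[(t-r)p,(t-r)q]=(t-r)^2W[p,q]$ which lets a common interlacer of the quotients lift back via multiplication by $(t-r)$; to make all degrees equal to $d$, rotate $\infty$ to a point avoiding all roots of all $f_i$. Now set $\bar f := \sum_i f_i$ (nonzero by properness) and $\bm g_\epsilon := (f_1+\epsilon\bar f,\ldots,f_n+\epsilon\bar f)$. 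By Theorem~\ref{thm:main-compat-perturb}, for $\epsilon>0$ small, $\bm g_\epsilon$ is compatible with every convex combination having only simple real roots; properness of $\bm g_\epsilon$ holds because $\sum a_i(f_i+\epsilon\bar f)=0$ rearranges to a positive multiple $\sum(a_i+\epsilon)f_i=0$ of a convex combination of $\bm f$, contradicting properness of $\bm f$.

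Next, apply Proposition~\ref{prop:sl2-all-pos-leading-coeff} to $\bm g_\epsilon$ to obtain $\phi_\epsilon\in\SL_2(\R)$ such that every polynomial in $\phi_\epsilon\cdot\bm g_\epsilon$ has degree exactly $d$ and, after possibly negating, positive leading coefficient. Theorem~\ref{thm:pos-leading-coeff-compat} then produces a real-rooted $h_\epsilon$ with $\phi_\epsilon\cdot g_{i,\epsilon}\ll h_\epsilon$ for all $i$; dropping the smallest root of $h_\epsilon$ if needed (which preserves interlacing against all polynomials of degree $d$), we may take $h_\epsilon\in\R^d[t]$ of degree $d$. Pulling back with Lemma~\ref{lem:sl2-interlacing} gives $g_{i,\epsilon}\ll\tilde h_\epsilon$ for all $i$, where $\tilde h_\epsilon:=\phi_\epsilon^{-1}\cdot h_\epsilon\in\R^d[t]$.

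Finally, let $\epsilon\to 0$. Normalize $\tilde h_\epsilon$ to unit norm (positive scaling preserves interlacing) and, by compactness of the unit sphere of $\R^d[t]$, extract a subsequence $\tilde h_\epsilon\to g$ with $\|g\|=1$, so $g\not\equiv 0$. Since $a g_{i,\epsilon}+b\tilde h_\epsilon$ is real-rooted for all $a,b\in\R$ and real-rootedness is a closed condition in $\R^d[t]$, $af_i+bg$ is real-rooted for all $a,b\in\R$, so HKO (Theorem~\ref{thm:hko}) yields $f_i\ll g$ or $g\ll f_i$. Passing $W[g_{i,\epsilon},\tilde h_\epsilon]\leq 0$ to the limit gives $W[f_i,g]\leq 0$ on all of $\R$, and Lemma~\ref{lem:wronskian} combined with the HKO dichotomy then forces $f_i\ll g$: in the alternative $\deg g=d-1$, only $g\ll f_i$ could hold, which would give $W[f_i,g]\geq 0$ everywhere, forcing $W[f_i,g]\equiv 0$ with the limit inequality, making $f_i$ proportional to $g$ and contradicting the degree mismatch. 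The main obstacle is this limit step: the rotations $\phi_\epsilon^{-1}$ need not converge and $\tilde h_\epsilon$ could in principle drop degree, so the Wronskian/HKO dichotomy is essential for securing both the interlacing relation and the correct degree of $g$ in the limit.
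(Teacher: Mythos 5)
Your proof follows essentially the same route as the paper's: reduce to the no-common-root case (and lift the interlacer back by multiplication), apply Theorem~\ref{thm:main-compat-perturb} to perturb to $\bm g_\epsilon = (f_i + \epsilon\bar f)_i$, use Proposition~\ref{prop:sl2-all-pos-leading-coeff} and Theorem~\ref{thm:pos-leading-coeff-compat} to produce a common interlacer for each small $\epsilon$, pull back with Lemma~\ref{lem:sl2-interlacing}, normalize, and pass to a convergent subsequence. The one place where you go further than the paper is the final limit step. The paper simply invokes ``continuity of roots''; you instead argue via the closedness of real-rootedness to apply HKO in the limit, and then use closedness of the Wronskian sign condition together with Lemma~\ref{lem:wronskian} to pin down the direction $f_i \ll g$ (ruling out $g \ll f_i$ by the degree/proportionality contradiction when $\deg g = d-1$). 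This Wronskian--HKO argument is a clean and fully rigorous way to justify the limit, handling precisely the concerns you note (non-convergence of $\phi_\epsilon^{-1}$ and possible degree drop of the limit $g$), and it also dovetails nicely with the Wronskian identity you use in the common-root reduction. You also explicitly truncate the interlacer to degree $d$ before normalizing, which the paper leaves implicit; this is needed for the compactness argument to stay inside $\R^d[t]$.

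Two minor remarks. The pre-rotation to make all $\deg f_i = d$ before applying Theorem~\ref{thm:main-compat-perturb} is harmless but not strictly necessary, since that theorem's statement does not require it. And in the common-root reduction, the direct check that inserting the root $r$ into both $\tilde f_i$ and $\tilde g$ preserves the alternation of roots is a clean alternative to the Wronskian-identity route; either works.
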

\begin{proof}
    We may assume without loss of generality that there is no $r \in \R \cup \{\infty\}$ for which $f(r) = 0$ for all $f \in \bm{f}$. Otherwise, we may divide each $f \in \bm{f}$ by $(t-r)$ for each simultaneous root $r$. After finding $\tilde{g}$ which satisfies the desired conclusion with these new polynomials, we simply multiply $\tilde{g}$ by these $(t-r)$ factors to obtain $g$.

    With this, we can apply Theorem~\ref{thm:main-compat-perturb} to $\bm{f}$. Letting $\bar{f} = \sum_{i=1}^n f_i$, this implies for all $\epsilon > 0$ small enough that $\bm{h} = (f_1 + \epsilon \bar{f}, \ldots, f_n + \epsilon \bar{f})$ is compatible and every convex combination of $\bm{h}$ has only simple roots. Thus by Proposition~\ref{prop:sl2-all-pos-leading-coeff}, there is some $\phi \in \SL_2(\R)$ (depending on $\epsilon$) such that all polynomials in $\phi \cdot \bm{h}$ have degree exactly $d$ and leading coefficients of the same sign. Therefore by Theorem~\ref{thm:pos-leading-coeff-compat} there exists $\tilde{g}_\epsilon$ such that $\phi \cdot (f_i + \epsilon \bar{f}) \ll \tilde{g}_\epsilon$ for all $i$, which implies $f_i + \epsilon \bar{f} \ll \phi^{-1} \cdot \tilde{g}_\epsilon =: g_\epsilon$ for all $i$.

    We now finally construct $g$. Note first that by scaling, we may assume that the maximum coefficient of $g_\epsilon$ in absolute value is exactly equal to $1$ for all $\epsilon > 0$ small enough. Since the set of polynomials $p \in \R^d[t]$ with coefficients in $[-1,1]$ is compact, we may choose a sequence $\epsilon_k \to 0$ such that $g_{\epsilon_k} \to g$ for some $g \in \R^d[t]$. Note that $g \not\equiv 0$ since $g_\epsilon$ always has a $\pm 1$ coefficient. Continuity of roots then implies $g$ satisfies the desired conclusion.
\end{proof}

\section{Proof of Theorem~\ref{thm:simplex-main}} \label{sec:simplex}

We now prove Theorem~\ref{thm:simplex-main} by showing that it is essentially a corollary of the $(1) \implies (3)$ case of Theorem~\ref{thm:main}. To do this, we need one extra lemma which shows that we can perturb interlacing polynomials so that they have simple roots. Recall that if $f$ is a real-rooted polynomial then we denote its roots (with multiplicity) in $\R$ by $\lambda_1(f) \geq \lambda_2(f) \geq \cdots$, and let $\|\cdot\|$ denote any norm on $\R^d[t]$ (since all are equivalent).

\begin{lemma} \label{lem:interlacing-perturb}
    Let $g, f_1,\ldots,f_n \in \R^d[t]$ be real-rooted such that $\bm{f} = (f_1,\ldots,f_n)$ is proper and that $f_i \ll g$ for all $i$. Then for all $\epsilon > 0$ small enough there exist real-rooted $\tilde{g}, \tilde{f}_1,\ldots,\tilde{f}_n \in \R^d[t]$ such that $\|g-\tilde{g}\| < \epsilon$, $\|f_i - \tilde{f}_i\| < \epsilon$ for all $i$, $(\tilde{f}_1,\ldots,\tilde{f}_n)$ is proper, $\tilde{f}_i \ll \tilde{g}$ for all $i$, and
    the roots (in $\R \cup \{\infty\}$) of $\tilde{g} \cdot \prod_{i=1}^n \tilde{f}_i$ are simple.
\end{lemma}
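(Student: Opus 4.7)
The plan is to first apply a small rotation in $\SO_2(\R)$ so that every polynomial has degree exactly $d$, and then to perturb the roots of $g$ and each $f_i$ by a layered, sign-aware offset so that the (possibly non-strict) interlacings become strict and no two polynomials share a root.

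For the $\SL_2(\R)$ reduction, by Remark~\ref{rem:rotations} I would pick a small $\phi \in \SO_2(\R)$ whose rotation angle avoids the finitely many values that would send a root of $g$ or of any $f_i$ (including a root at $\infty$) to $\infty$. Replacing the tuple by $(\phi \cdot g, \phi \cdot f_1, \ldots, \phi \cdot f_n)$, every polynomial has degree exactly $d$; interlacing is preserved by Lemma~\ref{lem:sl2-interlacing}, properness is preserved because the $\SL_2(\R)$-action is linear and fixes $0$, the sign of each leading coefficient is unchanged for $\phi$ close to the identity, and the norm-perturbation is arbitrarily small in the $\SO_2$-invariant norm from Remark~\ref{rem:rotations}.

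Now let $\mu_1 \geq \cdots \geq \mu_d$ and $\nu_{i,1} \geq \cdots \geq \nu_{i,d}$ denote the roots of $g$ and $f_i$, and let $\sigma_i \in \{+1,-1\}$ record whether $f_i$ has the same or opposite sign of leading coefficient as $g$. Unwinding the sign-flip rule in the definition of $\ll$, the hypothesis $f_i \ll g$ is equivalent to $\mu_j \geq \nu_{i,j} \geq \mu_{j+1}$ when $\sigma_i = +1$ and to $\nu_{i,j} \geq \mu_j \geq \nu_{i,j+1}$ when $\sigma_i = -1$. For $\eta > 0$ small and generic $\xi_{i,j} \in \R$ with $|\xi_{i,j}| \ll \eta$, define
\[
    \tilde{\mu}_j := \mu_j + (2d-2j+2)\eta, \qquad \tilde{\nu}_{i,j} := \nu_{i,j} + (2d-2j+2-\sigma_i)\eta + \xi_{i,j},
\]
and let $\tilde{g}$ and $\tilde{f}_i$ be the polynomials in $\R^d[t]$ with these roots and the same leading coefficients as $g$ and $f_i$. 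The offsets are designed so that the offset of $\tilde{\nu}_{i,j}$ differs from its two adjacent $\tilde{\mu}_\cdot$ offsets by exactly $\eta$, which forces the strict interlacing $\tilde{f}_i \ll \tilde{g}$ even at positions where the original relation was tight (such as $\mu_j = \nu_{i,j}$ or $\nu_{i,j} = \mu_{j+1}$). This strict interlacing yields simplicity of the roots of $\tilde{g}$ and of each $\tilde{f}_i$ individually, plus distinctness of the roots of $\tilde{g}$ from those of each $\tilde{f}_i$.

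For distinctness across $\tilde{f}_i$ and $\tilde{f}_{i'}$ with $i \neq i'$, in the case $j \neq j'$ the strict interlacings combined with the strictly decreasing $\tilde{\mu}_k$'s automatically force $\tilde{\nu}_{i,j} \neq \tilde{\nu}_{i',j'}$, and in the case $j = j'$ generic choice of the $\xi$'s provides it. Properness of $(\tilde{f}_1, \ldots, \tilde{f}_n)$ holds for all $\eta$ small because properness is an open condition on $(\R^d[t])^n$, and continuity of the map from (leading coefficient, root vector) to coefficient vector gives $\|\tilde{g} - g\|, \|\tilde{f}_i - f_i\| = O(\eta)$, so taking $\eta$ small supplies the $\epsilon$-bounds. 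The main subtlety that this scheme resolves is the simultaneous handling of two issues: the original interlacings can be non-strict (with roots coinciding between $f_i$ and $g$ or between different $f_i$'s), and different $f_i$'s can demand opposite interlacing orientations with $g$; the sign-dependent layered offset $(2d-2j+2-\sigma_i)\eta$ deals with both at once.
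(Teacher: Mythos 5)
Your approach is essentially the paper's: rotate by a small element of $\SO_2(\R)$ to put every polynomial in degree exactly $d$, then shift roots by a layered, sign-aware offset to force strict interlacing and pairwise distinctness. (The paper shifts $\lambda_j(g)$ by $-3j\epsilon$ and $\lambda_j(f_i)$ by $-(3j\pm i^{-1})\epsilon$ depending on the sign of the leading coefficient; your shifts $(2d-2j+2)\eta$ and $(2d-2j+2-\sigma_i)\eta+\xi_{i,j}$ are structurally the same device, with a generic additive $\xi$ replacing the deterministic $\pm i^{-1}\epsilon$.) One sentence in your argument is imprecise: you claim that for $j\ne j'$ the distinctness of $\tilde{\nu}_{i,j}$ and $\tilde{\nu}_{i',j'}$ is automatic from strict interlacing and the strict decrease of the $\tilde{\mu}$'s. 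This fails when $\sigma_i=+1$, $\sigma_{i'}=-1$, and $j'=j+1$: then $\tilde{\nu}_{i,j}$ and $\tilde{\nu}_{i',j+1}$ both lie in the gap $(\tilde{\mu}_{j+1},\tilde{\mu}_j)$, and their $\eta$-parts both equal $(2d-2j+1)\eta$, so this case too rests on genericity of the $\xi$'s. Since you do take all $\xi_{i,j}$ generic, the argument still goes through; you merely misattributed which collisions the genericity is needed for, whereas the paper builds the separating $\pm i^{-1}\epsilon$ terms directly into the deterministic offsets so that no appeal to genericity is required.
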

\begin{proof}
    Let $\phi \in \SO_2(\R) \subset \SL_2(\R)$ be such that $\phi \cdot g$ and $\phi \cdot f_i$ are all of degree exactly $d$ for all $i$. Since $\phi$ preserves a norm on $\R^d[t]$ by Remark~\ref{rem:rotations}, we may replace $g$ by $\phi \cdot g$ and $f_i$ by $\phi \cdot f_i$ for all $i$ without loss of generality. That is, we can act by $\phi$, determine $\tilde{g}$ and $\tilde{f}_i$ for all $i$, and then act by $\phi^{-1}$ to obtain the desired perturbations of the original polynomials. Further, by possibly negating all polynomials and permuting $\bm{f}$, we may assume without loss of generality that $g,f_1,\ldots,f_m$ have positive leading coefficients and $f_{m+1},\ldots,f_n$ have negative leading coefficients for some $0 \leq m \leq n$.
    
    Given any small $\epsilon > 0$, we now construct perturbations $\tilde{g},\tilde{f}_1,\ldots,\tilde{f}_n$ by perturbing roots via
    \[
        \lambda_j(\tilde{g}) = \lambda_j(g) - 3j \epsilon \quad \text{and} \quad 
        \begin{cases}
            \lambda_j(\tilde{f}_i) = \lambda_j(f_i) - (3j+i^{-1}) \epsilon, & 1 \leq i \leq m \\
            \lambda_j(\tilde{f}_k) = \lambda_j(f_k) - (3j-k^{-1}) \epsilon, & m+1 \leq k \leq n
        \end{cases}.
    \]
    Recall that all inequalities of the form $\lambda_{j+1}(g) \leq \lambda_j(f_i), \lambda_{j+1}(f_k) \leq \lambda_j(g)$ hold for valid $j$ and all $1 \leq i \leq m$ and $m+1 \leq k \leq n$. Thus we have
    \[
        \lambda_{j+1}(\tilde{g}) + 3\epsilon \leq \lambda_j(\tilde{f}_i) + i^{-1}\epsilon, \lambda_{j+1}(\tilde{f}_k) + (3-k^{-1})\epsilon \leq \lambda_j(\tilde{g}),
    \]
    which implies
    \[
        \lambda_{j+1}(\tilde{g}) < \lambda_j(\tilde{f}_i), \lambda_{j+1}(\tilde{f}_k) < \lambda_j(\tilde{g})
    \]
    for all $\epsilon > 0$ small enough. Note that this choice further implies that all roots of $\tilde{g} \cdot \prod_{i=1}^n \tilde{f}_i$ are simple, and that $\tilde{\bm{f}}$ is proper by compactness of $\Delta(\tilde{\bm{f}})$, for all $\epsilon > 0$ small enough. This yields the desired result.
\end{proof}

\begin{theorem} \label{thm:simplex-section}
    Let $\bm{f} = (f_1,\ldots,f_n) \subset \R^d[t]$ be proper and compatible. Then for all $\epsilon > 0$ small enough there exists $\tilde{\bm{f}} = (\tilde{f}_1,\ldots,\tilde{f}_n) \subset \R^d[t]$ such that $\|f_i - \tilde{f}_i\| < \epsilon$ for all $i$ and $\Delta(\tilde{\bm{f}})$ is contained in the interior of the set of real-rooted polynomials in $\R^d[t]$.
\end{theorem}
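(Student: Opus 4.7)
The plan is to combine the $(1) \implies (3)$ direction of Theorem~\ref{thm:main} with the perturbation procedure of Lemma~\ref{lem:interlacing-perturb}. Since $\bm{f}$ is proper and compatible, and hence $3$-compatible, Theorem~\ref{thm:section-2-to-3} produces a real-rooted $g \in \R^d[t]$ such that $f_i \ll g$ for all $i$. Applying Lemma~\ref{lem:interlacing-perturb} to $(g, f_1, \ldots, f_n)$ then yields, for every sufficiently small $\epsilon > 0$, close perturbations $\tilde{g}, \tilde{f}_1, \ldots, \tilde{f}_n \in \R^d[t]$ with $\|f_i - \tilde{f}_i\| < \epsilon$ for all $i$, with $\tilde{\bm{f}} = (\tilde{f}_1, \ldots, \tilde{f}_n)$ proper, $\tilde{f}_i \ll \tilde{g}$ for all $i$, and the roots of $\tilde{g} \cdot \prod_i \tilde{f}_i$ in $\R \cup \{\infty\}$ all simple. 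The $(3) \implies (1)$ direction of Theorem~\ref{thm:main} then implies that $\tilde{\bm{f}}$ is compatible, and Corollary~\ref{cor:simple-3-compat} gives that every convex combination of $\tilde{\bm{f}}$ has only simple real roots.

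To finish I would verify that each $q \in \Delta(\tilde{\bm{f}})$ lies in the interior of the real-rooted set in $\R^d[t]$. The interior consists precisely of those $q \in \R^d[t]$ with $d$ simple roots in $\R \cup \{\infty\}$; since the finite roots of every such $q$ are already known to be real and simple, the only remaining condition is that the root of $q$ at $\infty$ has multiplicity at most one, i.e.\ $\deg q \geq d - 1$. The simple-roots property of $\tilde{g} \cdot \prod_i \tilde{f}_i$ in $\R \cup \{\infty\}$ forces $\deg(\tilde{g} \cdot \prod_i \tilde{f}_i) \geq (n+1)d - 1$, so at most one polynomial among $\tilde{g}, \tilde{f}_1, \ldots, \tilde{f}_n$ has degree less than $d$. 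If some $q \in \Delta(\tilde{\bm{f}})$ had $\deg q \leq d - 2$, then Proposition~\ref{prop:compat-2-polys-root} would supply a convex combination of at most two of the $\tilde{f}_i$ having a root at $\infty$ of multiplicity at least $2$. No single $\tilde{f}_i$ has degree $\leq d - 2$, so this combination must be a strict two-polynomial combination; Lemma~\ref{lem:compatible-common-roots} would then force both polynomials involved to have degree less than $d$, contradicting the bound just established.

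The main obstacle is that Corollary~\ref{cor:simple-3-compat} addresses only the finite-root side of the interior condition; handling the root at infinity requires tracking how the simple-product hypothesis of Lemma~\ref{lem:interlacing-perturb} restricts the degree profile of $\tilde{\bm{f}}$, together with the basic reductions of Section~\ref{sec:basic-k-compat} governing where non-simple roots may appear in convex combinations of compatible families. Everything else is a direct synthesis of ingredients already established earlier in the paper.
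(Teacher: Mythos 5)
Your proof follows the paper's route step for step: obtain a common interlacer $g$ from Theorem~\ref{thm:section-2-to-3}, perturb via Lemma~\ref{lem:interlacing-perturb}, and conclude via Corollary~\ref{cor:simple-3-compat} together with the $(3)\implies(1)$ direction of Theorem~\ref{thm:main} that every convex combination of $\tilde{\bm{f}}$ has only simple roots. The one thing you add beyond the paper is the explicit verification that no $q \in \Delta(\tilde{\bm{f}})$ can drop to degree $\leq d-2$; the paper leaves this implicit (its ``simple real roots'' is best read as ``simple roots in $\R \cup \{\infty\}$,'' which does follow since the chain through Corollary~\ref{cor:3-compat-multiple-roots} and Lemma~\ref{lem:root-linear-reduction} tracks $r \in \R \cup \{\infty\}$), but you are right that membership in the interior also requires controlling the multiplicity at infinity, and your degree-counting argument via Proposition~\ref{prop:compat-2-polys-root} and Lemma~\ref{lem:compatible-common-roots} is a correct and worthwhile way to make that explicit.
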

\begin{proof}
    By Theorem~\ref{thm:section-2-to-3} there is a real-rooted $g \in \R^d[t]$ such that $f_i \ll g$ for all $i$. By Lemma~\ref{lem:interlacing-perturb} we can perturb by $\epsilon > 0$ small enough to obtain $\tilde{g},\tilde{f}_1,\ldots,\tilde{f}_n$ so that $(\tilde{f}_1,\ldots,\tilde{f}_n)$ is proper, $\tilde{f}_i \ll \tilde{g}$ for all $i$, and the roots of $\tilde{g} \cdot \prod_{i=1}^n \tilde{f}_i$ are simple. By Corollary~\ref{cor:simple-3-compat} (and Theorem~\ref{thm:main}), every convex combination of $(\tilde{f}_1,\ldots,\tilde{f}_n)$ has only simple real roots. This implies the desired result.
\end{proof}

\section{Proof of Proposition~\ref{prop:main-compatible-triples}} \label{sec:proof-main-triples}

In this section we prove Proposition~\ref{prop:main-compatible-triples} via a simple observation relating compatibility of a non-proper triple $(f_1,f_2,f_3)$ to interlacing properties of the pair $(f_1,f_2)$.

\begin{lemma} \label{lem:3-to-2-nonproper}
    Let $f_1,f_2,f_3 \in \R^d[t]$ be real-rooted and such that $af_1 + bf_2 + cf_3 \equiv 0$ for some $a \geq 0$ and $b,c > 0$. Then $(f_1,f_2,f_3)$ is compatible if and only if either $f_1 \ll f_2$ or $f_2 \ll f_1$.
\end{lemma}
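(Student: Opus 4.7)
The plan is to prove both directions by leveraging the Hermite--Kakeya--Obreschkoff theorem (Theorem~\ref{thm:hko}) together with the linear relation $af_1 + bf_2 + cf_3 \equiv 0$. The key observation is that since $b, c > 0$, this relation lets us eliminate $f_3$ via $f_3 = -\frac{a}{c} f_1 - \frac{b}{c} f_2$, so that every linear combination of $\{f_1, f_2, f_3\}$ is in fact a linear combination of $\{f_1, f_2\}$, and vice versa. Thus compatibility of the triple should be equivalent to real-rootedness of all linear combinations of the pair $(f_1, f_2)$, which by HKO is exactly the interlacing condition.

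For the backward direction, suppose without loss of generality that $f_1 \ll f_2$ or $f_2 \ll f_1$. By Theorem~\ref{thm:hko}, every real linear combination $\lambda f_1 + \mu f_2$ is real-rooted. Substituting the expression for $f_3$, any convex combination
\[
    \alpha f_1 + \beta f_2 + \gamma f_3 = \left(\alpha - \gamma \tfrac{a}{c}\right) f_1 + \left(\beta - \gamma \tfrac{b}{c}\right) f_2
\]
is a linear combination of $f_1, f_2$, hence is real-rooted (using that the zero polynomial is real-rooted by convention).

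For the forward direction, assume $(f_1, f_2, f_3)$ is compatible. I will show that every real linear combination $\lambda f_1 + \mu f_2$ is real-rooted; then HKO yields $f_1 \ll f_2$ or $f_2 \ll f_1$. Since real-rootedness is invariant under nonzero scaling and negation, it suffices to handle the case $\lambda \geq 0$, $\mu \leq 0$ (with not both zero); the cases where both have the same sign reduce immediately to a convex combination of $f_1, f_2$ alone, and the opposite mixed-sign case follows by negation. In this mixed-sign case, set
\[
    \gamma := -\mu \tfrac{c}{b} \geq 0, \qquad \beta := 0, \qquad \alpha := \lambda + (-\mu)\tfrac{a}{b} \geq 0,
\]
and compute using $f_3 = -\frac{a}{c} f_1 - \frac{b}{c} f_2$ that $\alpha f_1 + \beta f_2 + \gamma f_3 = \lambda f_1 + \mu f_2$. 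Since $(\alpha, \beta, \gamma)$ is a nonzero non-negative vector, after normalization it gives a convex combination, which is real-rooted by compatibility.

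There is no major obstacle here; the only minor point to be careful about is the ability to scale, which uses that real-rootedness is scale-invariant and that the zero polynomial is considered real-rooted, so the degenerate case where the chosen convex combination vanishes causes no issue. The crucial structural input is that $b, c > 0$, which guarantees that we can always solve for $\gamma$ in the direction needed to cancel the sign of $\mu$ (and symmetrically for the opposite sign pattern); the hypothesis $a \geq 0$ is only used to ensure $\alpha \geq 0$ in the substitution.
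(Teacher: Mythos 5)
Your proof is correct and takes essentially the same approach as the paper: both use the relation $f_3 = -\tfrac{a}{c}f_1 - \tfrac{b}{c}f_2$ (with $b,c>0$) to show that the nonnegative cone over $\{f_1,f_2,f_3\}$ and the full linear span of $\{f_1,f_2\}$ produce the same set of polynomials up to scaling, and then invoke HKO. The paper packages the forward direction into a single identity $\alpha f_1 + (\beta-\gamma)f_2 = (\alpha + \tfrac{a\gamma}{b})f_1 + \beta f_2 + \tfrac{c\gamma}{b}f_3$ where your version spells it out as a sign case analysis, but the content is identical.
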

\begin{proof}
    For any $\alpha,\beta,\gamma \geq 0$ we have
    \[
        \alpha f_1 + (\beta-\gamma) f_2 = \left(\alpha + \frac{a\gamma}{b}\right) f_1 + \beta f_2 + \frac{c\gamma}{b} f_3
    \]
    and
    \[
        \alpha f_1 + \beta f_2 + \gamma f_3 = \left(\alpha - \frac{a\gamma}{c}\right) f_1 + \left(\beta - \frac{b\gamma}{c}\right) f_2.
    \]
    Thus $(f_1,f_2,f_3)$ is compatible if and only if every linear combination of $f_1,f_2$ is real-rooted. By Theorem~\ref{thm:hko}, this implies the desired result.
\end{proof}

\begin{proposition}
    Let $(f_1,f_2,f_3) \subset \R[t]$ be such that $f_i$ is real-rooted for all $i \in \{1,2,3\}$. Then $(f_1,f_2,f_3)$ is compatible if and only if one of the following conditions holds:
    \begin{enumerate}
        \item there is a real-rooted $g \in \R[t]$ such that $f_1,f_2,f_3 \ll g$,
        \item there is a choice of $\{i,j,k\} = \{1,2,3\}$, $a \geq 0$, and $b,c > 0$ such that $af_i + bf_j + cf_k \equiv 0$ and either $f_i \ll f_j$ or $f_j \ll f_i$.
    \end{enumerate}
\end{proposition}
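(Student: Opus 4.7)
The plan is to prove the two directions separately, in both cases reducing to Theorem~\ref{thm:main} and Lemma~\ref{lem:3-to-2-nonproper}.

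For the "if" direction, suppose condition (1) holds. Then compatibility of $(f_1,f_2,f_3)$ follows immediately from the $(3) \implies (1)$ direction of Theorem~\ref{thm:main}, which was established in Section~\ref{sec:3_implies_1} without any properness hypothesis. If instead condition (2) holds, Lemma~\ref{lem:3-to-2-nonproper} applied to the chosen relabeling $(f_i,f_j,f_k)$ gives compatibility directly.

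For the "only if" direction, assume $(f_1,f_2,f_3)$ is compatible. The first case is that the triple is proper, in which event the $(1) \implies (3)$ direction of Theorem~\ref{thm:main} (proven in Section~\ref{sec:main-2-implies-3}) produces a real-rooted $g$ with $f_i \ll g$ for each $i$, which is exactly condition (1). Otherwise the triple is not proper, so there is a convex combination $\alpha f_1 + \beta f_2 + \gamma f_3 \equiv 0$ with $\alpha,\beta,\gamma \geq 0$ and $\alpha+\beta+\gamma=1$. If at least two of these coefficients are strictly positive, then after permuting indices we may write $a f_i + b f_j + c f_k \equiv 0$ with $a \geq 0$ and $b,c > 0$; Lemma~\ref{lem:3-to-2-nonproper} then yields either $f_i \ll f_j$ or $f_j \ll f_i$, establishing condition (2). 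If on the other hand exactly one of $\alpha,\beta,\gamma$ is positive, then the corresponding polynomial is identically zero (say $f_1 \equiv 0$), and $(f_2,f_3)$ inherits compatibility from $(f_1,f_2,f_3)$; invoking the $n=2$ case of Theorem~\ref{thm:main}, where $(1) \iff (3)$ holds unconditionally (as noted just after the statement of Theorem~\ref{thm:main}), produces a real-rooted $g$ with $f_2,f_3 \ll g$, and combining with the convention $f_1 = 0 \ll g$ gives condition (1) for the full triple.

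The main subtlety lies in the bookkeeping for the non-proper case: one must distinguish between a vanishing convex combination with two or more positive coefficients (which lands us in condition (2) via Lemma~\ref{lem:3-to-2-nonproper}) and one with a single positive coefficient (which forces a polynomial to be identically zero and instead places us in condition (1) via the $n=2$ case of Theorem~\ref{thm:main}). Once this case split is set up correctly, no further real work is required, since both cited results do all of the heavy lifting.
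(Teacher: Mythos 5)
Your proof is correct and follows essentially the same route as the paper: both directions reduce to Theorem~\ref{thm:main} and Lemma~\ref{lem:3-to-2-nonproper}, with the non-proper case split out and the ``one polynomial is identically zero'' subcase handled via the unconditional $n=2$ equivalence noted after Theorem~\ref{thm:main}. The only cosmetic difference is that you organize the non-proper subcases by the number of strictly positive coefficients in a chosen vanishing convex combination, whereas the paper splits on whether some $f_i$ is the zero polynomial; these are logically interchangeable, and both lead to the same invocations of the cited results.
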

\begin{proof}
    $(\implies)$. If $(f_1,f_2,f_3)$ is proper then condition $(1)$ follows from Theorem~\ref{thm:main}. If at least one of $f_1,f_2,f_3$ is the zero polynomial then $(1)$ also follows from the discussion following Theorem~\ref{thm:main}. Otherwise there is a choice of $\{i,j,k\} = \{1,2,3\}$ and $a \geq 0$, $b,c > 0$ such that $af_i + bf_j + cf_k = 0$, and thus $(2)$ follows from Lemma~\ref{lem:3-to-2-nonproper}.

    $(\impliedby)$. If $(1)$ holds then $(f_1,f_2,f_3)$ is compatible by Theorem~\ref{thm:main} and the discussion following Theorem~\ref{thm:main}. If $(2)$ holds, then $(f_1,f_2,f_3)$ is compatible by Lemma~\ref{lem:3-to-2-nonproper}.
\end{proof}

\section*{Acknowledgements} The first author acknowledges the support of the Natural Sciences and Engineering Research Council of Canada (NSERC), [funding reference number RGPIN-2023-03726]. Cette recherche a \'et\'e partiellement financ\'ee par le Conseil de recherches en sciences naturelles et en g\'enie du Canada (CRSNG), [num\'ero de r\'ef\'erence RGPIN-2023-03726].

\bibliographystyle{amsalpha}
\bibliography{bibliography}

\end{document}